\newcommand{\pushright}[1]{\ifmeasuring@#1\else\omit\hfill$\displaystyle#1$\fi\ignorespaces}
\newcommand{\pushleft}[1]{\ifmeasuring@#1\else\omit$\displaystyle#1$\hfill\fi\ignorespaces}
\numberwithin{equation}{section}
\newtheorem{thm}{Theorem}[section]
\newtheorem{prop}[thm]{Proposition}
\newtheorem{lem}[thm]{Lemma}
\newtheorem{cor}[thm]{Corollary}
\theoremstyle{definition}
\newtheorem{example}[thm]{Example}
\newtheorem{rem}[thm]{Remark}
\crefname{thm}{Theorem}{Theorems}
\crefname{prop}{Proposition}{Propositions}
\crefname{lem}{Lemma}{Lemmas}
\crefname{cor}{Corollary}{Corollaries}
\crefname{conj}{Conjecture}{Conjectures}
\crefname{dfn}{Definition}{Definitions}
\crefname{example}{Example}{Examples}
\crefname{rem}{Remark}{Remarks}
\newcommand{\C}{{\mathbb C}}
\newcommand{\gl}{\mathfrak{gl}}
\newcommand{\End}{\mathop{\rm End}}
\newcommand{\Tr}{{\rm Tr}}
\newcommand{\be}{\begin{equation*}}
\newcommand{\ee}{\end{equation*}}
\newcommand{\betheh}{{\mathcal{B}_h}}
\newcommand{\conjugate}[1]{#1^*}
\newcommand{\cyc}{\mathsf{cyc}}
\newcommand{\dd}{\mathsf{d}}
\newcommand{\DD}{\mathsf{D}}
\newcommand{\defn}[1]{{\bfseries\itshape #1}}
\DeclareMathOperator{\Diag}{\mathsf{Diag}}
\newcommand{\dop}{\mathcal{D}}
\newcommand{\du}[2]{\partial_{#1}^{\hspace*{1pt}#2}}
\newcommand{\entire}{{\mathcal O}}
\renewcommand{\eqref}[1]{\hyperref[#1]{\textup{(\ref*{#1})}}}
\newcommand{\eval}[1]{\hspace*{-1pt}\big\rvert_{#1}\hspace*{2pt}}
\newcommand{\evec}[1]{\mathsf{e}_{#1}}
\newcommand{\ff}[1]{\mathsf{f}^{#1}}
\newcommand{\hh}{\mathsf{g}}
\DeclareMathOperator{\GL}{GL}
\DeclareMathOperator{\Gr}{Gr}
\newcommand{\SG}[1]{\mathfrak{S}_{#1}}
\newcommand{\Sid}{\mathbbm{1}}
\newcommand{\x}{\mathbf{x}}
\DeclareMathOperator{\Wr}{Wr}
\begin{document}
\title[Spaces of quasi-exponentials]{Positivity and universal Pl\"{u}cker coordinates for spaces of quasi-exponentials}

\author{Steven N. Karp, Evgeny Mukhin, and Vitaly Tarasov}
\address{S.N.K.: Department of Mathematics, University of Notre Dame,
255 Hurley Hall,
Notre Dame, IN 46556, USA}
\email{\href{mailto:skarp2@nd.edu}{skarp2@nd.edu}}
\address{E.M.: Department of Mathematical Sciences,
Indiana University Indianapolis,
402 N. Blackford St., LD 270, 
Indianapolis, IN 46202, USA}
\email{\href{mailto:emukhin@iu.edu}{emukhin@iu.edu}}
\address{V.T.: Department of Mathematical Sciences,
Indiana University Indianapolis,
402 N. Blackford St., LD 270, 
Indianapolis, IN 46202, USA}
\email{\href{mailto:vtarasov@iu.edu}{vtarasov@iu.edu}}

\subjclass[2020]{82B23, 15B48, 05E05, 14M15, 30C15}
\thanks{S.N.K.\ is partially supported by the National Science Foundation under Award No.\ 2452061 and by a travel support gift from the Simons Foundation. E.M.\ is partially supported by the Simons Foundation grant \#709444. V.T.\ is partially supported by the Simons Foundation grants \#430235, \#852996.}


\begin{abstract}
A quasi-exponential is an entire function of the form $e^{cu}p(u)$, where $p(u)$ is a polynomial and $c \in \mathbb{C}$. Let $V = \langle e^{h_1u}p_1(u), \dots, e^{h_Nu}p_N(u) \rangle$ be a vector space with a basis of quasi-exponentials. We show that if $h_1, \dots, h_N$ are nonnegative and all of the complex zeros of the Wronskian $\operatorname{Wr}(V)$ are real, then $V$ is totally nonnegative in the sense that all of its Grassmann--Pl\"{u}cker coordinates defined by the Taylor expansion about $u=t$ are nonnegative, for any real $t$ greater than all of the zeros of $\operatorname{Wr}(V)$. Our proof proceeds by showing that the higher Gaudin Hamiltonians $T_\lambda^G(t)$ introduced in \cite{alexandrov_leurent_tsuboi_zabrodin14} are universal Pl\"{u}cker coordinates about $u=t$ for the Wronski map on spaces of quasi-exponentials. The result that $V$ is totally nonnegative follows from the fact that $T_\lambda^G(t)$ is positive semidefinite, which we establish using partial traces. We also show that if $h_1 = \cdots = h_N = 0$ then $T_\lambda^G(t)$ equals $\beta^\lambda(t)$, which is the universal Pl\"{u}cker coordinate for the Wronski map on spaces of polynomials introduced in \cite{karp_purbhoo}.
\end{abstract}

\dedicatory{Dedicated to Frank Sottile on the occasion of his 60th birthday}
\maketitle

\section{Introduction}\label{sec_introduction}

\noindent The \defn{Wronskian} of $N$ entire functions $f_1, \dots, f_N : \C \to \C$ in the variable $u$ is
\begin{align}\label{wronskian_formula}
\Wr(f_1, \dots, f_N) := \det\hspace*{-1pt}\big(\du{u}{i-1}f_j\big)_{1 \le i,j \le N} = \det\begin{bmatrix}
f_1 & \cdots & f_N \\[2pt]
f_1' & \cdots & f_N' \\
\vdots & \ddots & \vdots \\[2pt]
f_1^{(N-1)} & \cdots & f_N^{(N-1)}
\end{bmatrix}.
\end{align}
It is identically zero if $f_1, \dots, f_N$ are linearly dependent; otherwise, it only depends on the linear span $V$ of $f_1, \dots, f_N$ up to rescaling. In particular, it makes sense to write $\Wr(V)$ and consider its zeros. In 1993, Boris and Michael Shapiro 
conjectured a remarkable reality property for Wronskians of spaces of polynomials, which was proved in \cite{mukhin_tarasov_varchenko09a}: 
\begin{thm}[Reality theorem for spaces of polynomials \cite{mukhin_tarasov_varchenko09a}]\label{MTV_SS}
Suppose that $V \subseteq \C[u]$ is a finite-dimensional vector space of polynomials. If all complex zeros of $\Wr(V)$ are real, then $V$ is real (i.e.\ it has a basis of polynomials with real coefficients).\hfill\qed
\end{thm}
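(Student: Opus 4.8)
The plan is to prove this via the Bethe ansatz for the Gaudin model of $\mathfrak{gl}_N$ --- the strategy of \cite{mukhin_tarasov_varchenko09a}, and the one most directly generalized by the machinery of the present paper. The first step is to reduce the statement to a claim about a single fiber of the Wronski map. Write $\Wr(V) = c\prod_{i=1}^n (u-z_i)^{m_i}$ with the $z_i$ distinct, so that by hypothesis every $z_i \in \R$, and realize $V$ as a point of a Grassmannian $\Gr(N, \C_{<d}[u])$ of spaces of polynomials of bounded degree. Then $V$ lies in the scheme-theoretic fiber $\Wr^{-1}(\Wr(V))$, a finite subscheme cut out by Schubert conditions at the points $z_1, \dots, z_n$ and at $\infty$, and it suffices to prove that every closed point of this fiber is a real space of polynomials --- equivalently, that the fiber is a reduced scheme defined over $\R$ with all points real. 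To each root $z_i$ one attaches a partition $\lambda^{(i)}$ recording the ramification of $V$ forced by the multiplicity $m_i$, hence an irreducible polynomial $\mathfrak{gl}_N$-module $L_{\lambda^{(i)}}$; put $\mathbf{M} = L_{\lambda^{(1)}} \otimes \cdots \otimes L_{\lambda^{(n)}}$ and let $\mathbf{M}^{\mathrm{sing}}_\mu \subseteq \mathbf{M}$ be the space of singular vectors of the weight $\mu$ determined by $(N,d)$.

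The second step brings in the Bethe algebra $\mathcal{B}(z_1, \dots, z_n) \subseteq U(\mathfrak{gl}_N[t])$ --- the commutative algebra generated by the higher Gaudin Hamiltonians extracted from Talalaev's universal differential operator --- acting on $\mathbf{M}^{\mathrm{sing}}_\mu$. The technical heart, which I expect to be the main obstacle, is the structural theorem identifying this action: $\mathcal{B}(z_1, \dots, z_n)$ acting on $\mathbf{M}^{\mathrm{sing}}_\mu$ is isomorphic, as an algebra equipped with a faithful cyclic module, to the coordinate ring $\cO$ of the fiber $\Wr^{-1}(\Wr(V))$ acting on itself by multiplication. Under this isomorphism a joint eigenvector of $\mathcal{B}$ corresponds to a point $V'$ of the fiber, and the eigenvalues of the Gaudin Hamiltonians assemble, through the universal differential operator, into the monic linear differential operator whose kernel is exactly $V'$. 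Proving this requires: the Bethe ansatz construction of eigenvectors from solutions of the Bethe equations, which parametrize precisely the spaces of polynomials with the prescribed ramification; a dimension count matching $\dim \mathbf{M}^{\mathrm{sing}}_\mu$ with the length of the fiber (a Schubert-calculus computation); and a completeness argument ruling out spurious eigenvectors, via the Frobenius/Gorenstein structure of $\mathcal{B}$ and a Wronskian identity for the universal differential operator.

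Granting that identification, the reality statement follows from positivity. Since all $z_i$ are real, equip each $L_{\lambda^{(i)}}$ with its contravariant Shapovalov Hermitian form, which is positive definite for the compact real form $\mathfrak{u}_N$; the induced tensor form on $\mathbf{M}$, restricted to $\mathbf{M}^{\mathrm{sing}}_\mu$, is then also positive definite. Because the Casimir-type tensors $\Omega^{(ij)}$ underlying the Gaudin Hamiltonians are self-adjoint and the denominators $z_i - z_j$ are real, every generator of $\mathcal{B}(z_1, \dots, z_n)$ is self-adjoint with respect to this form. A commuting family of self-adjoint operators on a finite-dimensional Hermitian vector space is simultaneously diagonalizable with real spectrum, so $\mathcal{B}(z_1, \dots, z_n)$ acts diagonalizably on $\mathbf{M}^{\mathrm{sing}}_\mu$; by the structural theorem the fiber $\Wr^{-1}(\Wr(V))$ is therefore reduced, and all its joint eigenvalues --- the coefficients of the differential operators cutting out its points --- are real. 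Hence each point of the fiber, $V$ in particular, is the kernel of a monic linear differential operator with real coefficient functions, and such a kernel has a basis of real polynomials. This proves the theorem.

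Finally I would dispose of the degenerate configurations. If the data forces coincident Wronskian roots or higher multiplicities, the argument applies verbatim using the appropriate modules $L_{\lambda^{(i)}}$ of larger highest weight; alternatively one can first treat the case in which $\Wr(V)$ has simple roots --- so that the Gaudin model is built from copies of the vector representation $\C^N$ and the fiber lies in the transversal locus --- and then recover the general statement by a semicontinuity argument, deforming $\Wr(V)$ to a nearby polynomial with simple real roots while tracking the (real) points of the moving fiber. In every case the crux remains the Bethe-algebra/coordinate-ring isomorphism of the second step.
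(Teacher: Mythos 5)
The paper does not prove \cref{MTV_SS}; it is cited from \cite{mukhin_tarasov_varchenko09a} and used as a black box (the terminal $\square$ in the statement signals that no proof is given here). Your sketch is a faithful summary of the strategy of that reference: realize the fiber of the Wronski map as a Schubert intersection, identify the Gaudin Bethe algebra of $\gl_N$ acting on the singular-vector space $\mathbf{M}^{\mathrm{sing}}_\mu$ with the coordinate ring of that fiber acting regularly on itself, and then use self-adjointness with respect to the tensor Shapovalov form (valid when all $z_i$ are real) to conclude that the commuting family diagonalizes with real spectrum, forcing the fiber to be reduced with real points. You correctly flag the Bethe-algebra/coordinate-ring isomorphism as the technical heart, and your list of the ingredients it requires --- Bethe ansatz eigenvectors, a Schubert dimension count, and a Gorenstein/Frobenius completeness argument --- matches what is actually proved in \cite{mukhin_tarasov_varchenko09a,mukhin_tarasov_varchenko06}. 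For context, the present paper's contribution sits one level above this: it replaces the coefficient generators $B_i(u)$ of the Bethe algebra with the full family of higher Gaudin Hamiltonians $T_\lambda(t)$, interprets them as universal Pl\"{u}cker coordinates (\cref{quasiexp_coordinates}), and upgrades self-adjointness to positive semidefiniteness (\cref{Tlambda_positive}), which is what turns the reality statement into the positivity statement of \cref{positivity}.
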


The Shapiro--Shapiro conjecture has attracted a lot of attention in algebraic geometry (cf.\ \cite{sottile11}), since it can be equivalently formulated as the statement that any Schubert intersection problem on a Grassmannian $\Gr(N,\C^m)$ defined by osculating flags to a rational normal curve at real points has all real solutions.

\cref{MTV_SS} was generalized in \cite{mukhin_tarasov_varchenko09c,mukhin_tarasov_varchenko08} to spaces of \defn{quasi-exponentials}, i.e., spaces of the form $V = \langle e^{h_1u}p_1(u), \dots, e^{h_Nu}p_N(u)\rangle$, where $h_1, \dots, h_N$ are complex numbers (called the \defn{exponents} of $V$) and $p_1(u), \dots, p_N(u)$ are polynomials. Note that the Wronskian of $V$ is also a quasi-exponential:
$$
\Wr(V) = e^{(h_1 + \cdots + h_N)u}g(u), \quad \text{ for some polynomial } g(u).
$$

\begin{thm}[Reality theorem for spaces of quasi-exponentials {\cite{mukhin_tarasov_varchenko09c,mukhin_tarasov_varchenko08}}]\label{MTV_quasiexp}
Suppose that $V = \langle e^{h_1u}p_1(u), \dots, e^{h_Nu}p_N(u)\rangle$ is a space of quasi-exponentials. If the exponents of $V$ are real and all complex zeros of $\Wr(V)$ are real, then $V$ is real.\hfill\qed
\end{thm}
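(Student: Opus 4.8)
\emph{Sketch of proof.} The plan, following \cite{mukhin_tarasov_varchenko09c, mukhin_tarasov_varchenko08}, is to run the argument used to prove \cref{MTV_SS}, but with the $\gl_N$ Gaudin model replaced by its twisted version, the twist recording the exponents of $V$. Observe first that if $h_1 = \dots = h_N = h$, then $V = e^{hu}W$ for a space $W \subseteq \C[u]$ of polynomials, and $\Wr(V) = e^{Nhu}\Wr(W)$, so \cref{MTV_SS} applied to $W$ immediately gives that $W$, and hence $V$, is real. The content of the theorem therefore lies in the case of distinct exponents, which I would treat first and in detail; the general case follows from it by a degeneration argument, discussed at the end.

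The crux is a ``Wronskian $\leftrightarrow$ Bethe'' dictionary. Write $\Wr(V) = e^{(h_1 + \dots + h_N)u}\,g(u)$ and let $b_1, \dots, b_n$ be the distinct zeros of $g$, with multiplicities $m_1, \dots, m_n$. To this data attach the $\gl_N$ Gaudin model with regular semisimple twist $Z = \Diag(h_1, \dots, h_N)$, acting on the tensor product $\cM = L_{m_1\omega_1}(b_1) \otimes \dots \otimes L_{m_n\omega_1}(b_n)$ of evaluation modules, and restrict to the weight subspace $\cM[\mu]$ for the weight $\mu$ determined by $N$ and $\deg g$. The quasi-exponential analogue of the Mukhin--Varchenko correspondence, which is the main work in \cite{mukhin_tarasov_varchenko09c, mukhin_tarasov_varchenko08}, provides a bijection between the finite set of spaces of quasi-exponentials with exponents $h_1, \dots, h_N$ and Wronskian $e^{(\sum h_i)u}g(u)$, and the set of joint eigenlines of the twisted Bethe algebra $\bethe(Z)$ on $\cM[\mu]$; under it $V = \ker D_V$, where the coefficients of the monic differential operator $D_V$ are reconstructed from the joint eigenvalue of $\bethe(Z)$ on the corresponding line. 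In particular $V \mapsto (\text{joint eigenvalue})$ is injective, and complex conjugation of quasi-exponential spaces matches complex conjugation of joint eigenvalues.

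Now impose the hypotheses: $h_1, \dots, h_N \in \R$, and all zeros of $\Wr(V)$ real, so $b_1, \dots, b_n \in \R$. Each symmetric power $L_{m_i\omega_1}$ carries a positive-definite contravariant Hermitian (Shapovalov) form, so the tensor form on $\cM$ is positive definite, and it restricts to such a form on $\cM[\mu]$. With respect to it, $Z = \sum_i h_i E_{ii}$ is self-adjoint, since the contravariant anti-involution fixes each $E_{ii}$ and the $h_i$ are real; and since the $b_j$ are real, the twisted higher Gaudin Hamiltonians generating $\bethe(Z)$ are self-adjoint as well --- the twisted counterpart of the self-adjointness of the Gaudin Hamiltonians at real points exploited in \cref{MTV_SS}. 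Hence $\bethe(Z)$ acts on $\cM[\mu]$ diagonalizably with real joint spectrum. Complex conjugation preserves all the data (real $h_i$, real $b_j$, the form), so it permutes the joint eigenlines; but every joint eigenline has a real joint eigenvalue and is therefore conjugation-stable, hence spanned by a conjugation-invariant vector. Transporting back through the bijection, $V$ is conjugation-invariant, i.e.\ real.

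The principal obstacle is the dictionary of the second paragraph --- concretely, proving injectivity and completeness: that every space of quasi-exponentials with the given exponents and Wronskian comes from a genuine Bethe eigenvector, and that distinct spaces give distinct joint eigenvalues, so the joint eigenlines are one-dimensional. This comes down to matching the count of such spaces (a Schubert-type intersection number) with $\dim \cM[\mu]$, and to the nondegeneracy of the twisted Bethe ansatz; both require substantial representation theory of $\gl_N$ and of the current algebra $\gl_N[t]$, and are the technical heart of \cite{mukhin_tarasov_varchenko09c, mukhin_tarasov_varchenko08}. The secondary obstacle is the reduction to non-generic configurations --- coincident exponents, or multiple or colliding zeros of $\Wr(V)$ --- handled by degenerating from the generic case while keeping the relevant families of quasi-exponential spaces and of Bethe data flat, and using that reality of $V$ is a closed condition.
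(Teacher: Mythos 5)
The paper does not reprove this theorem; it cites \cite{mukhin_tarasov_varchenko09c,mukhin_tarasov_varchenko08} and, in the remark after \cref{MTV_operator}, describes the proof strategy from \cite[Section 7.1]{mukhin_tarasov_varchenko08}: use the bijection of \cref{MTV_operator} between quasi-exponential spaces and eigenspaces of the Bethe algebra $\betheh(z_1,\dots,z_n)$ acting on $\C^N(-z_1)\otimes\cdots\otimes\C^N(-z_n)$, together with the fact that for real $h_i$ and $z_j$ the generators $B_{i,j}$ are self-adjoint and hence have real eigenvalues. Your sketch is the same argument in outline, with two cosmetic differences. First, you model the situation on $L_{m_1\omega_1}(b_1)\otimes\cdots\otimes L_{m_n\omega_1}(b_n)$ (symmetric powers at distinct zeros), whereas the paper uses $\C^N(-z_1)\otimes\cdots\otimes\C^N(-z_n)$ (vector representations at the zeros with multiplicity); both appear in the MTV literature and are equivalent for this purpose. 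Second, you route the reality conclusion through conjugation-stability and one-dimensionality of the joint eigenlines, flagging simplicity of the spectrum as a serious obstacle; this is more than is needed. Since the coefficients of the monic operator $D_V$ \emph{are} the joint eigenvalues, once those eigenvalues are shown to be real the differential operator has real coefficients and $V=\ker D_V$ is automatically real --- no claim about multiplicity of the joint spectrum (nor completeness of Bethe vectors beyond \cref{MTV_operator}\ref{MTV_operator_backward}) is required. Your observation that the case $h_1=\dots=h_N$ reduces immediately to \cref{MTV_SS} is a nice addition that the paper does not mention.
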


Observe that taking $h_1 = \cdots = h_N = 0$ in \cref{MTV_quasiexp} recovers \cref{MTV_SS}.
A generalization of \cref{MTV_SS} in a different direction upgrades reality to positivity \cite{karp_purbhoo}. Namely, let $\Gr(N,X)$ denote the \defn{Grassmannian} of all $N$-dimensional subspaces of the vector space $X$. Given a space $V = \langle f_1, \dots, f_N\rangle\in\Gr(N,\entire)$ of entire functions, consider their Taylor expansions about $u=t$ (for some $t\in\C$):
$$
f_j(u) = \sum_{i=0}^\infty\frac{f_{i,j}}{i!}(u-t)^i \quad \text{ for } 1 \le j \le N, \text{ where } f_{i,j} =\du{u}{i}f_j(t)\in\C.
$$
The \defn{Pl\"{u}cker coordinates} of $V$ about $u=t$ are the $N\times N$ minors of the infinite matrix
\begin{align}\label{A_matrix}
\begin{bmatrix}
f_{0,1} & \cdots & f_{0,N} \\[2pt]
f_{1,1} & \cdots & f_{1,N} \\
\vdots &  & \vdots
\end{bmatrix},
\end{align}
which are well-defined up to simultaneous rescaling. They are naturally labeled by partitions $\lambda = (\lambda_1 \ge \cdots \ge \lambda_N \ge 0)$ with length $\ell(\lambda) \le N$, where $\lambda$ indexes the minor using rows $\lambda_N, \lambda_{N-1} + 1, \dots, \lambda_1 + N-1$.

We call $V\in\Gr(N,\entire)$ \defn{totally nonnegative} (respectively, \defn{totally positive}) about $u=t$ if all such Pl\"{u}cker coordinates are real and nonnegative (respectively, positive), up to rescaling. This is an analogue for $\Gr(N,\entire)$ of the notion of total positivity for finite-dimensional Grassmannians $\Gr(N,\C^M)$ introduced by Lusztig \cite{lusztig94} and Postnikov \cite{postnikov06}. A connection with Wronskians of polynomials was established in \cite{karp_purbhoo}:

\begin{thm}[Positivity theorem for spaces of polynomials {\cite[Theorem 1.10(i)]{karp_purbhoo}}]\label{KP_positivity}
Suppose that $V \in \Gr(N,\C[u])$ is a space of polynomials, and write $\Wr(V) = (u+z_1) \cdots (u+z_n)$. If $z_1, \dots, z_n$ are real, then $V$ is totally nonnegative about $u=t$ for all $t \ge -z_1, \dots, -z_n$.\hfill\qed
\end{thm}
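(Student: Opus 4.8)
The plan is to reduce to a generic situation, realize the Pl\"ucker coordinates of $V$ about $u=t$ as the joint eigenvalues of a commuting family of positive semidefinite operators, and then remove genericity by continuity.

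\emph{Reductions.} Since all zeros of $\Wr(V)$ are real, \cref{MTV_SS} gives that $V$ has a basis of real polynomials, so all its Pl\"ucker coordinates are real; it remains to see that, after one global rescaling, they are all nonnegative---equivalently, that any two nonzero Pl\"ucker coordinates have the same sign. Setting $n=\deg\Wr(V)$, the space $V$ lies in the finite-dimensional Grassmannian $\Gr(N,\C_{M}[u])$ of polynomials of degree less than $M:=N+n$, where the Pl\"ucker coordinates are honest $N\times N$ minors of a finite matrix, indexed by partitions $\lambda$ inside the $N\times(M-N)$ rectangle. Total nonnegativity about $u=t$ is a closed condition on $\Gr(N,\C_M[u])$, and since the Wronski map is finite the points of its fibre deform continuously along a path of Wronskians with real roots; hence it suffices to prove the statement when $\Wr(V)$ has $n$ \emph{distinct} real roots, and (since the Pl\"ucker coordinates are polynomial in $t$) for $t$ strictly greater than all zeros of $\Wr(V)$, the boundary values following by closedness.

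\emph{Universal Pl\"ucker coordinates.} For generic $\Wr(V)$ I would use operators $\beta^\lambda(t)$---the specialization to $h_1=\dots=h_N=0$ of the $T$-operators of \cite{alexandrov_leurent_tsuboi_zabrodin14}---lying in the commutative Bethe algebra of the Gaudin model attached to these data, set up so that: (a) the Bethe algebra acts with simple joint spectrum on the relevant weight space (completeness of the Bethe ansatz, which is also the engine behind \cref{MTV_SS}); (b) its joint eigenvectors are in bijection with the points $V'$ of the Wronski fibre over $\Wr(V)$; and (c) the eigenvalue of $\beta^\lambda(t)$ on the eigenvector for $V'$ is the Pl\"ucker coordinate $\Delta_\lambda(V')$ about $u=t$, up to one positive constant depending only on $V'$ and not on $\lambda$. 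Granting this, the theorem for generic $\Wr(V)$ reduces to showing that each $\beta^\lambda(t)$ is positive semidefinite with respect to the (positive-definite) Shapovalov form whenever $z_1,\dots,z_n$ are real and $t\ge -z_1,\dots,-z_n$: then for every $V'$ in the fibre the numbers $\{\Delta_\lambda(V')\}_\lambda$ are simultaneously nonnegative, which is exactly total nonnegativity of $V'$ about $u=t$. Note this is why the hypothesis only constrains $\Wr(V)$ and not $V$ itself---the conclusion holds for the entire real fibre at once.

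\emph{The crux, and the obstacle.} The heart of the matter---and where I expect the real difficulty---is the positive semidefiniteness of $\beta^\lambda(t)$. The approach is to write $\beta^\lambda(t)$ as a partial trace over an auxiliary space (encoded by $\lambda$) of a product of $\cL$-operators evaluated at the numbers $t+z_i$, i.e.\ the distances from $t$ to the zeros $-z_i$ of $\Wr(V)$, all of which are nonnegative by hypothesis; to prove that each such $\cL$-operator is a positive operator exactly when its parameter is $\ge 0$; and then to invoke that a partial trace of a positive operator against the identity is again positive, so that $\beta^\lambda(t)$ is positive semidefinite. The necessity of the condition $t\ge -z_i$ is already visible for $N=1$: there $\Wr(V)=\prod_i(u+z_i)$ and the Pl\"ucker coordinates about $u=t$ are the Taylor coefficients of $\prod_i\bigl(u+(t+z_i)\bigr)$, which are nonnegative precisely when every $t+z_i\ge 0$. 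Finally, one returns to an arbitrary $V$ with real Wronskian zeros, and to the boundary values $t=-z_i$, by the continuity and closedness arguments indicated above.
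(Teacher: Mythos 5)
Your high-level framework is exactly the one used in the paper (specialized to $h_1=\cdots=h_N=0$): realize the Pl\"ucker coordinates about $u=t$ as simultaneous eigenvalues of a commuting family of self-adjoint operators in the Bethe algebra, and deduce total nonnegativity from positive semidefiniteness of those operators. This is what \cref{quasiexp_coordinates} together with \cref{Tlambda_positive} and \cref{Tlambda_beta} accomplishes, and it is what the introduction calls the ``new proof of \cref{KP_positivity}.'' Your Taylor-coefficient example for $N=1$ is a good sanity check, and you correctly identify the crux as proving that $\beta_\lambda(t)$ is positive semidefinite when $t+z_i\ge 0$ for all $i$.

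However, the mechanism you propose for the crux is both different from the paper's and, as stated, has a gap. You say: write $\beta_\lambda(t)$ as a partial trace over an auxiliary space of a product of $\mathcal{L}$-operators $\mathcal{L}_1(t+z_1)\cdots\mathcal{L}_n(t+z_n)$, prove each factor is positive for nonnegative argument, and then use that a partial trace of a positive operator is positive. The problem is that a product of positive semidefinite operators is \emph{not} positive semidefinite unless the factors commute (and its partial trace need not be positive either); here the $\mathcal{L}$-operators act on overlapping tensor slots and do not commute, so the chain of implications breaks. The paper's actual argument avoids this entirely. For KP\_positivity there is no auxiliary trace at all: by \cref{Tlambda_beta} and \eqref{defn_beta},
\begin{equation*}
\beta_\lambda(t)\;=\;\sum_{\substack{K\subseteq[n],\\ |K|=|\lambda|}}\Big(\prod_{k\in K}(t+z_k)\Big)\,\alpha_\lambda^{(K)},
\end{equation*}
a sum with coefficients $\prod_{k\in K}(t+z_k)\ge 0$, where each $\alpha_\lambda^{(K)}$ is, by \cref{alpha_semidefinite}, a positive scalar multiple of an orthogonal projection and hence positive semidefinite; positive semidefiniteness is then immediate. (Partial traces do enter the paper's proof, but only for the general quasi-exponential case $h\ne 0$, and even there the operator inside the trace is a product of two \emph{commuting} positive operators $h^{(L)}$ and $\alpha_\lambda^{(L)}$, which is essential to \cref{semidefinite_product} and \cref{semidefinite_partial_trace}.) A second, more minor point: your reduction to distinct Wronskian roots and strict $t>-z_i$ is harmless but unnecessary in the paper's framework, since \cref{MTV_operator}\ref{MTV_operator_backward} already provides an eigenspace $E$ with $V=V_E$ for arbitrary $z_1,\dots,z_n$, and the eigenvalue vector of positive semidefinite operators is nonnegative with no genericity assumption.
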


There is also a totally positive version of \cref{KP_positivity}; see \cite[Theorem 1.10(ii)]{karp_purbhoo}. \cref{KP_positivity} verifies a conjecture posed by the second and third authors in 2017 as well as independently in \cite{karp24}. One motivation for \cref{KP_positivity} is an application to real Schubert calculus: it implies that any Schubert intersection problem on a Grassmannian $\Gr(N,\C^M)$ defined by non-overlapping secant flags to a rational normal curve at real points has all real solutions. This verifies the secant conjecture of Sottile in the case of divisor Schubert conditions; see \cite[Section 1.3]{karp_purbhoo} for further discussion.

In this paper we generalize and provide a new proof of \cref{KP_positivity}:
\begin{thm}[Positivity theorem for spaces of quasi-exponentials]\label{positivity}
Suppose that $V = \langle e^{h_1u}p_1(u), \dots, e^{h_Nu}p_N(u)\rangle\in\Gr(N,\entire)$ is a space of quasi-exponentials, and write
\begin{align}\label{positivity_wronskian}
\Wr(V) = e^{(h_1 + \cdots + h_N)u}(u + z_1) \cdots (u + z_n).
\end{align}
Suppose that $z_1, \dots, z_n$ are real.
\begin{enumerate}[label=(\roman*), leftmargin=*, itemsep=2pt]
\item\label{positivity_weak} If $h_1, \dots, h_N \ge 0$, then $V$ is totally nonnegative about $u=t$ for all real $t \ge -z_1, \dots, -z_n$.
\item\label{positivity_strict} If $h_1, \dots, h_N > 0$, then $V$ is totally positive about $u=t$ for all real $t > -z_1, \dots, -z_n$.
\end{enumerate}
\end{thm}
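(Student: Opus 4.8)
The plan is to realize the Grassmann--Pl\"{u}cker coordinates $p_\lambda(V;t)$, up to a common nonzero scalar, as the joint eigenvalues of the commuting family of higher Gaudin Hamiltonians $T_\lambda^G(t)$ of \cite{alexandrov_leurent_tsuboi_zabrodin14} acting on a finite-dimensional module $W$ that carries a positive-definite Hermitian form, and then to prove that each $T_\lambda^G(t)$ is positive semidefinite (resp.\ positive definite) on $W$ for the relevant range of real $t$. The architecture mirrors the proofs of the reality theorems \cref{MTV_SS,MTV_quasiexp}, in which self-adjointness of the Gaudin algebra forces reality of $V$; the point here is to upgrade self-adjointness to positive semidefiniteness and thereby upgrade reality to total nonnegativity, as was done for spaces of polynomials in \cite{karp_purbhoo}.

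First I would recall the dictionary between spaces of quasi-exponentials and the spectrum of the Gaudin algebra (with twist given by the exponents $h_1,\dots,h_N$): to a space $V$ as in \eqref{positivity_wronskian}, with Wronskian zeros $-z_1,\dots,-z_n$, one attaches a joint eigenvector $v_V$ of this algebra on a finite-dimensional module $W$ determined by $z_1,\dots,z_n$. When the $h_i$ and $z_i$ are real, $W$ carries a positive-definite Hermitian form for which the Gaudin algebra acts by self-adjoint operators, so each $T_\lambda^G(t)$ with $t\in\R$ is self-adjoint; and since \cref{MTV_quasiexp} already furnishes a real basis of $V$, the matrix \eqref{A_matrix} may be taken real, so the $p_\lambda(V;t)$ are real and it suffices to show that for $t$ in the stated range they all share a common sign (resp.\ are nonzero with a common sign). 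The crucial input is then the \emph{universality} of $T_\lambda^G(t)$: it should act on $v_V$ with eigenvalue $c(t)\hspace{1pt}p_\lambda(V;t)$ for a scalar $c(t)$ independent of $\lambda$, so that $T_\lambda^G(t)$ is literally a universal Pl\"{u}cker coordinate about $u=t$ for the Wronski map on quasi-exponentials. I would prove this by expressing the generating function of the $T_\lambda^G(t)$ (the master $T$-operator of \cite{alexandrov_leurent_tsuboi_zabrodin14}) through Baxter $Q$-operators, whose eigenvalues on $v_V$ recover $\Wr(V)$ and the exponents, and comparing the result with a Wronskian/Jacobi--Trudi-type expression for the Pl\"{u}cker coordinates of $V$ about $u=t$; as a sanity check, setting $h_1=\cdots=h_N=0$ should turn $T_\lambda^G(t)$ into $\beta^\lambda(t)$, the universal Pl\"{u}cker coordinate for spaces of polynomials from \cite{karp_purbhoo}.

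The remaining step is to show that for real $t$ with $t\ge -z_1,\dots,-z_n$ and $h_1,\dots,h_N\ge 0$ the operator $T_\lambda^G(t)$ is positive semidefinite on $W$, and that for $t>-z_1,\dots,-z_n$ and $h_1,\dots,h_N>0$ it is positive definite. Here the mechanism is to present $T_\lambda^G(t)$ as a partial trace, over an auxiliary tensor factor, of a manifestly nonnegative operator assembled from $R$-matrix-type blocks whose entries become nonnegative precisely once each $t+z_i\ge 0$ and each $h_i\ge 0$; a partial trace of a positive operator of the form $A^\dagger A$ then satisfies $\langle v, T_\lambda^G(t)\hspace{1pt}v\rangle\ge 0$ for all $v\in W$. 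Combined with the universality identity, this makes $c(t)\hspace{1pt}p_\lambda(V;t)\ge 0$ for every $\lambda$, so $V$ is totally nonnegative about $u=t$, which gives part~\ref{positivity_weak}; the positive-definite refinement, together with $v_V\ne 0$, gives the strict inequalities of part~\ref{positivity_strict}. The boundary values $t=-z_j$ (where $\Wr(V)(t)=0$) and the case of repeated $z_i$ both follow from the interior case by a limiting argument, since total nonnegativity about a fixed $u=t$ is a closed condition on $V$.

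I expect the main obstacle to lie in the interplay of the last two steps: identifying the correct module $W$ and showing that one and the same operator $T_\lambda^G(t)$ is at once a universal Pl\"{u}cker coordinate and admits a partial-trace presentation witnessing positive semidefiniteness, uniformly in the exponents $h_1,\dots,h_N$. The exponential factors $e^{h_iu}$ --- absent from the polynomial case of \cite{karp_purbhoo} --- are what complicate this: they enter as the twist of the Gaudin model, so one must track how the twist parameters propagate through the $Q$-operator relations in the universality step, and then verify that the hypothesis $h_i\ge 0$ is exactly what keeps the partial-trace presentation nonnegative (and $h_i>0$ what makes it positive definite).
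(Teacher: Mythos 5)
Your proposal follows the same architecture as the paper's first proof: show that $T_\lambda^G(t)$ is a universal Pl\"{u}cker coordinate for the Wronski map on quasi-exponentials, then prove positive (semi)definiteness of $T_\lambda^G(t)$ by a partial-trace presentation, then conclude. But the technical routes differ at both steps, in ways worth flagging. For universality, you propose to go through the master $T$-operator and Baxter $Q$-operators, whereas the paper instead connects the column operators $T_{(1^i)}(u)$ directly to the coefficients $B_i(u)$ of the universal differential operator $\dop^\betheh$ of Mukhin--Tarasov--Varchenko (both being obtained from Talalaev's construction; see \cref{equality_columns}), and then bootstraps from columns to arbitrary $\lambda$ by first proving a new Jacobi--Trudi identity for Pl\"{u}cker coordinates of arbitrary spaces of entire functions (\cref{JT}) and matching it to the determinant identity of \cite{alexandrov_leurent_tsuboi_zabrodin14}. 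That Jacobi--Trudi identity is a genuine piece of new infrastructure that your sketch elides. For positivity, your suggested ``partial trace of $A^\dagger A$'' picture is not how the paper proceeds, and the naive version does not obviously work: the summands in the partial-trace formula \eqref{Tlambda_equation} are $\Tr_{L\setminus K}\bigl(h^{(L\setminus K)}\alpha_\lambda^{(L)}\bigr)$, and $h^{(L\setminus K)}$ does \emph{not} commute with $\alpha_\lambda^{(L)}$, so you cannot simply take square roots. The paper's mechanism is the identity $\Tr_{L\setminus K}\bigl(h^{(L\setminus K)}\alpha_\lambda^{(L)}\bigr) = (h^{-1})^{(K)}\Tr_{L\setminus K}\bigl(h^{(L)}\alpha_\lambda^{(L)}\bigr)$ from \eqref{four_ways}, after which one repeatedly applies: product of commuting positive semidefinite operators is positive semidefinite, and partial trace preserves positive semidefiniteness. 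Finally, the paper's first proof requires no limiting argument for boundary values of $t$ or repeated $z_i$; the partial-trace formula handles $t \ge -z_i$ directly since the coefficients $\prod_{l\notin K}(t+z_l)$ are then nonnegative. (The paper's second proof, in \cref{sec_poly_to_quasi}, does deduce the quasi-exponential case from the polynomial case by a degeneration argument of the flavor you hint at, plus a separate triangular-total-positivity argument to upgrade nonnegative to positive.)
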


\begin{example}\label{eg_positivity}
We illustrate \cref{positivity} for the space $V := \langle e^{h_1u}, \dots, e^{h_Nu}\rangle$ of pure exponentials, where $h_1, \dots, h_N\in\C$ are distinct. We have (up to rescaling)
$$
\Wr(V) = e^{(h_1 + \cdots + h_N)u}.
$$
For $t\in\C$, the matrix from \eqref{A_matrix} equals
$$
\begin{bmatrix}
e^{h_1t} & \cdots & e^{h_Nt} \\
h_1e^{h_1t} & \cdots & h_Ne^{h_Nt} \\
h_1^2e^{h_1t} & \cdots & h_N^2e^{h_Nt} \\
\vdots & & \vdots
\end{bmatrix}.
$$
Then the Pl\"{u}cker coordinate of $V$ about $u=t$ indexed by the partition $\lambda$ equals (up to a global scalar depending on $t$) the Schur polynomial $s_\lambda(h_1, \dots, h_N)$. Thus part \ref{positivity_weak} is equivalent to the statement that $s_\lambda(h_1, \dots, h_N) \ge 0$ if $h_1, \dots, h_N \ge 0$, and part \ref{positivity_strict} is equivalent to $s_\lambda(h_1, \dots, h_N) > 0$ if $h_1, \dots, h_N > 0$. Both statements follow from the well-known fact that $s_\lambda(h_1, \dots, h_N)$ expands into monomials with positive coefficients.
\end{example}

We provide two proofs of \cref{positivity}. The first uses the \defn{higher Gaudin Hamiltonians} $T_\lambda(u)\in\End((\C^N)^{\otimes n})$ introduced by Alexandrov, Leurent, Tsuboi, and Zabrodin \cite{alexandrov_leurent_tsuboi_zabrodin14}, indexed by partitions $\lambda$ with $\ell(\lambda)\le N$. They are appropriate limits of the Yangian transfer matrices corresponding to the evaluation module of the Yangian obtained from the irreducible $\mathfrak{gl}_N$-module of highest weight $\lambda$, acting in a tensor product of evaluation vector representations with evaluation parameters $-z_i$. In particular, the higher Gaudin Hamiltonians all commute: $T_\lambda(u)T_\mu(v)=T_\mu(v)T_\lambda(u)$. Explicitly, we have
\begin{align}\label{defn_Tlambda}
T_\lambda(u) := (u + z_1 + \dd_1) \cdots (u + z_n + \dd_n)s_\lambda(h) \in \End((\C^N)^{\otimes n}),
\end{align}
where $h$ is an $N\times N$ matrix with eigenvalues $h_1, \dots, h_N$, $s_\lambda(h)$ is the Schur polynomial evaluated at the eigenvalues of $h$, and $\dd_k$ is the usual matrix derivative with respect to the transpose of $h$ acting in the $k$th tensor factor of $(\C^N)^{\otimes n}$ (see \cref{sec_Tlambda}). 

We show that for all $t\in\C$, the operators $T_\lambda(t)$ are universal Pl\"{u}cker coordinates for the Wronski map on spaces of quasi-exponentials about $u=t$ (see \cref{quasiexp_coordinates}). This means that for every space $V = \langle e^{h_1u}p_1(u), \dots, e^{h_Nu}p_N(u)\rangle$ with Wronskian \eqref{positivity_wronskian}, there exists a common eigenspace $E \subseteq (\C^N)^{\otimes n}$ of the operators $T_\lambda(t)$, such that by replacing each $T_\lambda(t)$ by its eigenvalue, we obtain the vector of Pl\"{u}cker coordinates of $V$ about $u=t$. We accomplish this by proving a Jacobi--Trudi identity expressing an arbitrary Pl\"{u}cker coordinate as a determinant of Pl\"{u}cker coordinates indexed by single-column partitions (see \eqref{equation_JT_2}), and use the fact from \cite{alexandrov_leurent_tsuboi_zabrodin14} that the operators $T_\lambda(t)$ satisfy the same Jacobi--Trudi identity.

With this result in hand, \cref{positivity} then follows by proving that $T_\lambda(t)$ is positive (semi)definite for appropriate values of the parameters (see \cref{Tlambda_positive}). We accomplish this by deriving a formula for $T_\lambda(t)$ in terms of partial traces and not involving any derivatives (see \cref{Tlambda_formula}). Our arguments also provide a new proof of \cref{KP_positivity}.

Our second proof of \cref{positivity} deduces it directly from \cref{KP_positivity}. The idea is to prove part \ref{positivity_weak} of \cref{positivity} by writing each exponential function as a limit of polynomials (e.g.\ $e^u = \lim_{m\to\infty}\big(1 + \frac{u}{m}\big)^m$). We then show that part \ref{positivity_weak} implies part \ref{positivity_strict}, using the fact that the linear operator $e^{cu}$ on $\entire$ (where $c > 0$) is represented by a triangularly totally positive matrix.

In summary, we give two proofs of \cref{positivity}, one using the theory of Gaudin models and the other using the results of \cite{karp_purbhoo}. The advantage of the former proof is that it is short and conceptual for experts in integrable systems, and it also provides a new proof of \cref{KP_positivity}. The disadvantage is that it relies on results in \cite{alexandrov_leurent_tsuboi_zabrodin14}, which the proof in \cite{karp_purbhoo} does not. Rather, \cite{karp_purbhoo} uses the combinatorics of permutations and symmetric functions and the representation theory of symmetric groups.

\subsection*{Outline}
In \cref{sec_background} we recall some background. Importantly, in \cref{equality_columns}, we express the operators $T_{(1^i)}(u)$ (indexed by single-column partitions) in terms of the coefficients of the universal differential operator $\dop^\betheh$ from \cite{mukhin_tarasov_varchenko06}. We also recall from \cite{alexandrov_leurent_tsuboi_zabrodin14} that an arbitrary $T_\lambda(u)$ can be expressed in terms of the $T_{(1^i)}(u)$'s by a dual Jacobi--Trudi formula. In \cref{sec_JT}, we show that the Pl\"{u}cker coordinates of $V$ satisfy the same formula. In \cref{sec_universal} we use these facts, along with results about $\dop^\betheh$ from \cite{mukhin_tarasov_varchenko06}, to prove that the operators $T_\lambda(t)$ are universal Pl\"{u}cker coordinates about $u=t$ for the Wronski map on spaces of quasi-exponentials.

In \cref{sec_formula_proof} we prove a formula for $T_\lambda(u)$ in terms of partial traces, which we use in \cref{sec_semidefinite} to show that it is positive semidefinite for appropriate choices of the parameters. We give the first proof of our main result \cref{positivity} at the end of \cref{sec_semidefinite}. In \cref{sec_poly_to_quasi} we give our second proof of \cref{positivity}, by deducing it from \cref{KP_positivity}.

Finally, in \cref{sec_quasi_to_poly}, we explain how our paper is connected to the paper \cite{karp_purbhoo}, in which \cref{KP_positivity} was established. In particular, we show that when $h_1 = \cdots = h_N = 0$, the operators $T_\lambda(u)$ coincide with the operators $\beta_\lambda(u)$ introduced in \cite{karp_purbhoo}, which are universal Pl\"{u}cker coordinates for the Wronski map on spaces of polynomials.

\subsection*{Notation}
For the benefit of readers comparing our work with the literature, we briefly clarify how our notation differs from \cite{mukhin_tarasov_varchenko08,alexandrov_leurent_tsuboi_zabrodin14,karp_purbhoo}. Our $z_k$ is denoted by $-b_k$ in \cite{mukhin_tarasov_varchenko08} and by $-x_k$ in \cite{alexandrov_leurent_tsuboi_zabrodin14}. Our $h_i$ is denoted by $K_i$ in \cite{mukhin_tarasov_varchenko08} and by $k_i$ in \cite{alexandrov_leurent_tsuboi_zabrodin14}. Our $T_\lambda(u)$ is denoted by $T_\lambda^G(u)$ in \cite{alexandrov_leurent_tsuboi_zabrodin14}; we emphasize that the same paper also considers the operator $\mathsf{T}_\lambda^G(u)$ (which is a rescaling of $T_\lambda^G(u)$) and the operator $\mathsf{T}_\lambda(u)$ (which is a Yangian analogue of $\mathsf{T}_\lambda^G(u)$). Our $N$ is denoted by $d$ in \cite{karp_purbhoo}, and therein $\lambda$ appears in superscripts rather than subscripts. 

\subsection*{Acknowledgments}
We thank John Harnad for initiating discussions about the relationship between \cite{alexandrov_leurent_tsuboi_zabrodin14} and \cite{karp_purbhoo}, and Kevin Purbhoo for helpful conversations.

\section{Background}\label{sec_background}

\noindent In this section, we set some conventions for notation and for Pl\"{u}cker coordinates of spaces of functions. We then recall some standard background on symmetric functions, symmetric group characters, and partial traces; see, e.g., \cite[Chapter 7]{stanley24}, \cite{sagan01}, \cite[Section 2.4.3]{nielsen_chaung00} for further details. We also recall the higher Gaudin Hamiltonians $T_\lambda(u)$ introduced in \cite{alexandrov_leurent_tsuboi_zabrodin14}, and the universal differential operator $\dop^\betheh$ studied in \cite{mukhin_tarasov_varchenko06}.

\subsection{Notation}
We fix $z_1, \dots, z_n \in \C$, and set $z_K := \prod_{k\in K}z_k$ for $K\subseteq [n] := \{1, \dots, n\}$. We let $(x)_m := x(x-1)\cdots(x-m+1)$ denote the falling factorial. Let $\entire$ denote the space of entire functions on $\C$, and let $\partial_x := \frac{\partial}{\partial x}$ denote the partial derivative with respect to $x$.

A \defn{partition} of $m\ge 0$ is a weakly decreasing sequence of nonnegative integers $\lambda = (\lambda_1, \dots, \lambda_l)$ such that $\lambda_1 + \cdots + \lambda_l = m$. We write $|\lambda| = m$ or $\lambda\vdash m$. The number of nonzero parts of $\lambda$ is called its \defn{length}, denoted $\ell(\lambda)$. We identify partitions up to trailing zeros (e.g.\ $(4,1,1) = (4,1,1,0,0)$). We define the \defn{diagram} of $\lambda$ to be the array of $|\lambda|$ left-justified boxes with $\lambda_i$ boxes in the $i$th row from the top for all $i\ge 1$. The \defn{conjugate} $\conjugate{\lambda}$ of $\lambda$ is the partition whose diagram is the transpose of that of $\lambda$. We let $(1^m) := \conjugate{(m)} = (1, \dots, 1)$ denote the partition of $m$ whose diagram is a single column of length $m$, which is the conjugate of the partition $(m)$ whose diagram is a single row of length $m$. In particular, we let $\varnothing := (0)=(1^0)$ denote the unique partition of $0$.

All vector spaces $V$ are defined over $\mathbb{C}$. For $S\subseteq V$, we let $\langle S\rangle$ denote the linear span of $S$. For $A,B\in\End(V)$, we let $AB$ denote the composition $A\circ B$. We let $\evec{1}, \dots, \evec{N}$ denote the standard basis of $\C^N$. We identify $\End(\C^N)$ with the space of $N\times N$ matrices over $\C$. For $1 \le i, j \le N$, we let $E_{i,j} := \evec{i}\evec{j}^T\in\End(\C^N)$ denote the matrix whose $(i,j)$-entry is $1$ and whose other entries are $0$. Also, we let $h$ denote an element of $\End(\C^N)$ with entries $h_{i,j}$ for $1 \le i,j \le N$ and eigenvalues $h_1, \dots, h_N\in\C$. More generally, for any matrix $A$ we let $A_{I,J}$ denote the submatrix of $A$ restricted to the row subset $I$ and the column subset $J$.

We will frequently work with the vector space $(\C^N)^{\otimes L}$, the $|L|$-fold tensor product of $\C^N$ with factors indexed by $L$. For $K\subseteq L$, we regard $\End((\C^N)^{\otimes K})$ as a subalgebra of $\End((\C^N)^{\otimes L})$ acting as the identity on the tensor factors in $L\setminus K$. Let $h^{(K)}\in\End((\C^N)^{\otimes K})$ denote the diagonal action of $h$ on the tensor factors in $K$.

\subsection{Pl\"{u}cker coordinates and translation}
Let $V\in\Gr(N,\entire)$ be a space of entire functions. Recall the Pl\"{u}cker coordinates of $V$ about $u=t$ defined in \eqref{A_matrix}. We will prefer to instead view such coordinates as the Pl\"{u}cker coordinates of the translated space $V(t)\in\Gr(N,\entire)$ about $u=0$; this will allow us to assume that all of our Taylor expansions are about $u=0$, unless specified otherwise.

Explicitly, fix a basis $(f_1, \dots, f_N)$ of $V$. For every partition $\lambda$ with $\ell(\lambda) \le N$, we define the \defn{Pl\"{u}cker coordinate} (about $u=0$)
$$
\Delta_\lambda(V) := \det(A_{\{\lambda_N, \lambda_{N-1} + 1, \dots, \lambda_1 + N-1\}, \{1, \dots, N\}}) = \det\hspace*{-1pt}\big(\du{u}{\lambda_{N-i+1}+i-1}f_j(0))_{1 \le i,j \le N},
$$
where $A$ denotes the matrix in \eqref{A_matrix}; it is implicit in the notation $\Delta_\lambda(V)$ that we are taking Taylor expansions about $u=0$. We also set $\Delta_\lambda(V) := 0$ if $\ell(\lambda) > N$. The vector of Pl\"{u}cker coordinates $(\Delta_\lambda(V))_\lambda$ is well-defined (i.e.\ independent of the choice of basis) up to rescaling.

Given $t\in\C$, we define the \defn{translation} of $V$ by $t$ to be
$$
V(t) := \{f(u+t) : f(u)\in V\}\in\Gr(N,\entire),
$$
so that $V = V(0)$. Then for all partitions $\lambda$ with $\ell(\lambda)\le N$, we have
\begin{align}\label{plucker_translation}
\Delta_\lambda(V(t)) = \det\hspace*{-1pt}\big(\du{u}{\lambda_{N-i+1}+i-1}f_j(t)\big)_{1 \le i,j \le N} = (-1)^{\binom{N}{2}}\det\hspace*{-1pt}\big(\du{u}{\lambda_i+N-i}f_j(t)\big)_{1 \le i,j \le N}.
\end{align}
That is, the Pl\"{u}cker coordinates of $V(t)$ about $u=0$ are precisely the Pl\"{u}cker coordinates of $V$ about $u=t$. 

Note that \eqref{plucker_translation} allows us to treat $\Delta_\lambda(V(t))$ as an entire function of $t$. For example, by \eqref{wronskian_formula} we have
\begin{align}\label{equation_wronskian_empty}
\Delta_\varnothing(V(u)) = \Wr(V).
\end{align}
Also, by \cite[(2.14)]{karp_purbhoo}, we have the explicit formula (up to rescaling)
\begin{align}\label{translation_identity}
\Delta_\mu(V(u)) = \sum_{\lambda\supseteq\mu}\frac{\ff{\lambda/\mu}}{(|\lambda| - |\mu|)!}u^{|\lambda| - |\mu|}\Delta_\lambda(V) \quad \text{ for all partitions } \mu,
\end{align}
where the sum is over all partitions $\lambda$ whose diagram contains $\mu$, and $\ff{\lambda/\mu}$ is the number of standard Young tableaux of skew shape $\lambda/\mu$ (we will not need the precise definition).

\subsection{Symmetric functions and symmetric group characters}
Let $\x := \{x_i\}$ denote a finite or countably infinite set of variables, and let $\Lambda(\x)$ denote the $\C$-algebra of symmetric functions in $\x$. Given a partition $\lambda$, we let $s_\lambda(\x) \in \Lambda(\x)$ denote the \defn{Schur function}; if $\x = \{x_1, \dots, x_N\}$ is finite, then $s_\lambda(x_1, \dots, x_N)\neq 0$ if and only if $\ell(\lambda)\le N$, and
$$
s_\lambda(x_1, \dots, x_N) = \frac{\det(x_j^{\lambda_i + N - i})_{1 \le i,j \le N}}{\det(x_j^{N - i})_{1 \le i,j \le N}} \quad \text{ if } \ell(\lambda) \le N.
$$
We also define the \defn{power sum} $p_\lambda(\x) \in \Lambda(\x)$ by
$$
p_\lambda(\x) := p_{\lambda_1}(\x) \cdots p_{\lambda_{\ell(\lambda)}}(\x), \quad \text{ where } p_k(\x) := \sum_i x_i^k \text{ for all } k \ge 1.
$$
For any symmetric function $f(\x) \in \Lambda(\x)$, we let $f(h) := f(h_1, \dots, h_N)$ denote $f$ evaluated at the eigenvalues of $h\in\End(\C^N)$.

Given a finite set $K$ of size $m$, we let $\SG{K}$ denote the \defn{symmetric group} of all permutations of $K$ (i.e.\ bijections $K\to K$), and we let $\C[\SG{K}]$ denote its group algebra. Let $Z(\C[\SG{K}])$ denote the center of $\C[\SG{K}]$, which is spanned by $\sum_{\sigma\in C}\sigma$ over all conjugacy classes $C\subseteq\SG{K}$. We let $\C[\SG{K}]$ act on $(\C^N)^{\otimes K}$ by
$$
\sigma\Big(\underset{k\in K}{\otimes}v_k\Big) := \underset{k\in K}{\otimes}v_{\sigma^{-1}(k)} \quad \text{ for all } \sigma\in\SG{K} \text{ and } v_k\in\C^N
$$
and extending linearly. When $K = [m]$, we denote $\SG{K}$ by $\SG{m}$.

For $\sigma\in\SG{K}$, we let $\cyc(\sigma)$ denote its \defn{cycle type}, which is the partition of $m$ whose parts are the lengths of the cycles in the disjoint cycle decomposition of $\sigma$. For $\lambda\vdash m$, we let $\chi^\lambda : \SG{K}\to\C$ denote the character of the Specht module indexed by $\lambda$, which is an irreducible representation of $\SG{K}$ of dimension $\ff{\lambda} = \ff{\lambda/\varnothing}$. Our conventions are such that $\chi^{(m)}$ is the identity character and $\chi^{(1^m)}$ is the sign character.

We will use three additional facts about symmetric functions and symmetric group characters. Firstly, $s_\lambda(x_1, \dots, x_N)$ has a positive monomial expansion \cite[Section 7.10]{stanley24}:
\begin{align}\label{schur_monomial}
s_\lambda(x_1, \dots, x_N) \in \mathbb{N}[x_1, \dots, x_N].
\end{align}
Secondly, we have the Schur function expansion into power sums \cite[Corollary 7.17.5]{stanley24}:
\begin{align}\label{schur_expansion}
s_\lambda(\x) = \frac{1}{m!}\sum_{\sigma\in\SG{m}}\chi^\lambda(\sigma)p_{\cyc(\sigma)}(\x) \quad \text{ for all } \lambda\vdash m.
\end{align}
Thirdly, traces of matrix powers are power sums:
\begin{align}\label{power_sum_trace}
\Tr(h^k) = h_1^k + \cdots + h_N^k = p_k(h) \quad \text{ for all } k \ge 1.
\end{align}

\subsection{Partial traces}\label{sec_partial_trace}
Given finite-dimensional vector spaces $V_1$ and $V_2$, we define the \defn{partial trace} $\Tr_2 : \End(V_1 \otimes V_2) \to \End(V_1)$ by
\begin{align}\label{defn_partial_trace}
\Tr_2(A_1 \otimes A_2) := \Tr(A_2)A_1 \quad \text{ for all } A_1 \in \End(V_1) \text{ and } A_2 \in \End(V_2)
\end{align}
and extending linearly, where $\Tr(A_2)\in\C$ is the usual trace. Equivalently, if $(v_1, \dots, v_N)$ is a basis of $V_2$ with dual basis $(v_1^*, \dots, v_N^*)$, then
\begin{align}\label{partial_trace_decomposition}
\Tr_2(A) = \sum_{j=1}^N(I_{V_1}\otimes v_j^*)\hspace*{1pt}A\hspace*{1pt}(I_{V_1}\otimes v_j),
\end{align}
where $I_{V_1}\in\End(V_1)$ denotes the identity operator. We will need the following property:
\begin{lem}\label{trace_commute}
Let $A\in\End(V_1\otimes V_2)$ and $B\in\End(V_2)$. Then
$$
\Tr_2(A(I_{V_1}\otimes B)) = \Tr_2((I_{V_1}\otimes B)A).
$$
\end{lem}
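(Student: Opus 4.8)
The plan is to reduce to simple tensors by bilinearity and then invoke cyclicity of the ordinary trace. Both sides of the asserted identity are linear in $A$, and $\End(V_1 \otimes V_2)$ is spanned by operators of the form $A_1 \otimes A_2$ with $A_1 \in \End(V_1)$ and $A_2 \in \End(V_2)$, so it suffices to verify the identity when $A = A_1 \otimes A_2$.

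For such an $A$, one has $A(I_{V_1} \otimes B) = A_1 \otimes (A_2 B)$ and $(I_{V_1} \otimes B)A = A_1 \otimes (B A_2)$. Hence the defining property \eqref{defn_partial_trace} of $\Tr_2$ gives $\Tr_2(A(I_{V_1} \otimes B)) = \Tr(A_2 B)\hspace*{1pt}A_1$ and $\Tr_2((I_{V_1} \otimes B)A) = \Tr(B A_2)\hspace*{1pt}A_1$, and these coincide because $\Tr(A_2 B) = \Tr(B A_2)$.

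Alternatively, one can argue directly from \eqref{partial_trace_decomposition} without the reduction: fixing a basis $(v_1, \dots, v_N)$ of $V_2$ with dual basis $(v_1^*, \dots, v_N^*)$ and expanding $B$ in this basis, both sides become the same double sum over the matrix entries of $B$ after relabeling the summation indices. Either way there is no genuine obstacle; the only point requiring care is the bookkeeping of indices (equivalently, tracking on which side of $B$ cyclicity of the ordinary trace is being applied).
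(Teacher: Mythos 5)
Your proof is correct and takes essentially the same route as the paper: reduce to simple tensors $A = A_1 \otimes A_2$ by linearity, then apply the defining formula for $\Tr_2$ together with cyclicity of the ordinary trace.
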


\begin{proof}
By linearity, we may assume that $A = A_1\otimes A_2$. Then the result follows from \eqref{defn_partial_trace} and the fact that $\Tr(A_2B) = \Tr(BA_2)$.
\end{proof}

More generally, given finite-dimensional vector spaces $V_1, \dots, V_m$, for $K\subseteq [m]$ we define the \defn{partial trace}
$$
\Tr_K : \End(V_1 \otimes \cdots \otimes V_m) \to \End\bigg(\bigotimes_{l\in [m]\setminus K}V_l\bigg)
$$
by
\begin{align}\label{equation_partial_trace_general}
\Tr_K(A_1 \otimes \cdots \otimes A_m) := \prod_{k\in K}\Tr(A_k) \cdot \bigotimes_{l\in [m]\setminus K}A_l \quad \text{ for all } A_k\in\End(V_k)
\end{align}
and extending linearly. Note that if $K,L\subseteq [m]$ are disjoint, then $\Tr_K\Tr_L = \Tr_L\Tr_K = \Tr_{K\cup L}$. Also observe that $\Tr_{[m]}(A)\in\C$ is the usual trace of $A\in\End(V_1 \otimes \cdots \otimes V_m)$.

\subsection{Higher Gaudin Hamiltonians \texorpdfstring{$T_\lambda(u)$}{T}}\label{sec_Tlambda}
We carefully define the operators $T_\lambda(u)$ from \eqref{defn_Tlambda}, following \cite{alexandrov_leurent_tsuboi_zabrodin14}. Regarding $h\in\End(\C^N)$ as an indeterminate, for any scalar-valued function $f : \End(\C^N) \to \C$, we define the (transposed) \defn{matrix derivative}
$$
\dd(f(h)) := \sum_{1 \le i,j \le N}\partial_{\varepsilon}\eval{\varepsilon=0}f(h + \varepsilon E_{j,i}) \cdot E_{i,j} \in \End(\C^N).
$$
More generally, let $L\subseteq [m]$ and $k\in [m]\setminus L$. Then for any $f : \End(\C^N) \to \End((\C^N)^{\otimes L})$, we define
$$
\dd_k(f(h)) := \sum_{1 \le i,j \le N}\partial_{\varepsilon}\eval{\varepsilon=0}f(h + \varepsilon E_{j,i}) \cdot E_{i,j}^{(k)} \in \End((\C^N)^{\otimes L\cup\{k\}}).
$$
Note that $\dd_k\dd_l(f(h)) = \dd_l\dd_k(f(h))$ for all $k\neq l$ in $[m]\setminus L$. For $K\subseteq [m]\setminus L$, we set $\dd_K := \prod_{k\in K}\dd_k$.

Then as in \eqref{defn_Tlambda}, given a partition $\lambda$, we define
$$
T_\lambda(u) := (u + z_1 + \dd_1) \cdots (u + z_n + \dd_n)s_\lambda(h) \in \End((\C^N)^{\otimes n})
$$
as a polynomial in the indeterminate $u$, where $s_\lambda(h)$ is the Schur polynomial evaluated at the eigenvalues $h_1, \dots, h_N$ of $h$. We emphasize that the definition of $T_\lambda(u)$ depends on the choice of $N$, $n$, $h_1, \dots, h_N\in\C$, and $z_1, \dots, z_n\in\C$.

We will also consider the specialization of $T_\lambda(u)$ at $u=t$ (where $t\in\C$), denoted $T_\lambda(t)$. Observe that $T_\lambda(t)$ is obtained from $T_\lambda := T_\lambda(0)$ by replacing each $z_k$ with $z_k+t$. We have
\begin{align}\label{defn_Tlambda_expanded}
T_\lambda = \sum_{K\subseteq [n]}z_{[n]\setminus K}\cdot\dd_Ks_\lambda(h).
\end{align}

Note that $T_\lambda(u) = 0$ if $\ell(\lambda) > N$. Also, $T_\lambda(u)$ is invariant under conjugating $h$, and hence only depends on the eigenvalues of $h$. Therefore we may assume that $h = \Diag(h_1, \dots, h_N)$ is a diagonal matrix.

\begin{example}\label{eg_Tlambda}
Let us calculate $T_\lambda$ for all partitions $\lambda$ of size at most $2$ (cf.\ \cite[(3.12)]{alexandrov_leurent_tsuboi_zabrodin14}); then $T_\lambda(u)$ is obtained from $T_\lambda$ by replacing each $z_k$ by $z_k + u$. By \eqref{schur_expansion} and \eqref{power_sum_trace}, we have
$$
s_\varnothing(h) = 1, \quad s_{(1)}(h) = \Tr(h), \quad s_{(2)}(h) = \frac{\Tr(h)^2 + \Tr(h^2)}{2}, \quad s_{(1,1)}(h) = \frac{\Tr(h)^2 - \Tr(h^2)}{2}.
$$
Since $\dd(1) = 0$, we get
$$
T_\varnothing = z_{[n]} = z_1 \cdots z_n.
$$
Now we calculate $\dd(\Tr(h)^k) \in \End(\C^N)$:
$$
\dd(\Tr(h)^k) = \sum_{1 \le i,j \le N}\partial_{\varepsilon}\eval{\varepsilon=0}(\Tr(h + \varepsilon E_{j,i}))^k \cdot E_{i,j} = \sum_{i=1}^Nk\hspace*{1pt}\Tr(h)^{k-1} \cdot E_{i,i} = k\hspace*{1pt}\Tr(h)^{k-1}\cdot I,
$$
where $I$ denotes the identity operator. Hence
$$
T_{(1)} = z_{[n]}\Tr(h) + \sum_{k=1}^n z_{[n]\setminus\{k\}}.
$$

Next, we calculate $\dd\big(\frac{\Tr(h^2)}{2}\big) \in \End(\C^N)$:
$$
\dd\bigg(\frac{\Tr(h^2)}{2}\bigg) = \sum_{1 \le i,j \le N}\partial_{\varepsilon}\eval{\varepsilon=0}\frac{\Tr((h + \varepsilon E_{j,i})^2)}{2} \cdot E_{i,j} = \sum_{1 \le i,j \le N}h_{i,j}\cdot E_{i,j} = h \in \End(\C^N).
$$
Finally, we calculate $\dd_2\dd_1\big(\frac{\Tr(h^2)}{2}\big) \in \End(\C^N \otimes \C^N)$:
$$
\dd_2\dd_1\bigg(\frac{\Tr(h^2)}{2}\bigg) = \dd_2(h^{(1)}) = \sum_{1 \le i,j \le N}\partial_{\varepsilon}\eval{\varepsilon=0}(h + \varepsilon E_{j,i})^{(1)} \cdot E_{i,j}^{(2)} = \sum_{1 \le i,j \le N}E_{j,i}^{(1)}E_{i,j}^{(2)}  = \sigma_{1,2},
$$
where $\sigma_{k,l}\in\SG{n}$ (for distinct $k,l\in [n]$) denotes the transposition swapping $k$ and $l$. Hence
\begin{align*}
T_{(2)} &= \frac{z_{[n]}}{2}(\Tr(h)^2 + \Tr(h^2)) + \sum_{k=1}^nz_{[n]\setminus\{k\}}(\Tr(h) + h^{(k)}) + \sum_{1 \le k < l \le n}z_{[n]\setminus\{k,l\}}(I + \sigma_{k,l}), \\
T_{(1,1)} &= \frac{z_{[n]}}{2}(\Tr(h)^2 - \Tr(h^2)) + \sum_{k=1}^nz_{[n]\setminus\{k\}}(\Tr(h) - h^{(k)}) + \sum_{1 \le k < l \le n}z_{[n]\setminus\{k,l\}}(I - \sigma_{k,l}),
\end{align*}
completing the calculation.
\end{example}

The following key properties of $T_\lambda(u)$ were established in \cite{alexandrov_leurent_tsuboi_zabrodin14}:
\begin{thm}{\cite[Section 4.1 and (4.29)]{alexandrov_leurent_tsuboi_zabrodin14}}\label{ALTZ_properties}~
\begin{enumerate}[label=(\roman*), leftmargin=*, itemsep=2pt]
\item\label{ALTZ_properties_commute} The operators $T_\lambda(u)$ pairwise commute:
\begin{align}\label{Tlambda_commute}
T_\lambda(u)T_\mu(v) = T_\mu(v)T_\lambda(u) \quad \text{ for all partitions $\lambda,\mu$}.
\end{align}
\item\label{ALTZ_properties_determinant} For all partitions $\lambda$, we have
\begin{align}\label{equation_ALTZ_determinant}
\frac{T_\lambda(u)}{(u+z_1) \cdots (u+z_n)} = \det\left(\sum_{k=0}^{j-1}(-1)^k\binom{j-1}{k}\du{u}{k}\frac{T_{(1^{\conjugate{\lambda}_i-i+j-k})}(u)}{(u+z_1) \cdots (u+z_n)}\right)_{1 \le i,j \le \lambda_1}
\end{align}
as rational functions of $u$, where $\conjugate{\lambda}$ denotes the conjugate of $\lambda$ and $(1^m)$ denotes a single-column partition.\hfill\qed
\end{enumerate}
\end{thm}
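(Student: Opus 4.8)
This statement is quoted from \cite{alexandrov_leurent_tsuboi_zabrodin14}, where it is obtained by realizing $T_\lambda(u)$ as a classical (Gaudin) limit of Yangian transfer matrices, and the plan is to follow that route. Work in the Yangian $Y(\gl_N)$ with its $RTT$ presentation, and let $\mathsf{T}_\lambda(u)\in\End((\C^N)^{\otimes n})$ be the transfer matrix built from the irreducible $\gl_N$-module of highest weight $\lambda$ acting on a tensor product of $n$ evaluation vector representations with evaluation parameters $-z_1,\dots,-z_n$ and a diagonal twist $\Diag(e^{\hbar h_1},\dots,e^{\hbar h_N})$, with $\hbar$ a formal parameter. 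One then rescales the spectral variable and lets $\hbar\to0$.

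For part \ref{ALTZ_properties_commute}: at the Yangian level the commutativity $\mathsf{T}_\lambda(u)\mathsf{T}_\mu(v)=\mathsf{T}_\mu(v)\mathsf{T}_\lambda(u)$ is standard — the fundamental transfer matrices $\mathsf{T}_{(1^k)}(u)$ pairwise commute by the $RLL$ relation, and an arbitrary $\mathsf{T}_\lambda(u)$ is obtained from them by fusion, so all the $\mathsf{T}_\lambda(u)$ lie in one commutative subalgebra. The substantive computational step is the degeneration lemma: after the rescaling, $\mathsf{T}_\lambda(u)\to T_\lambda(u)$ as in \eqref{defn_Tlambda}, i.e.\ the product of $L$-operators degenerates so that the generating operator becomes $(u+z_1+\dd_1)\cdots(u+z_n+\dd_n)$ and the twisted highest-weight data becomes $s_\lambda(h)$ evaluated at the eigenvalues of $h$, with the matrix derivatives $\dd_k$ produced by expanding the twist $e^{\hbar h}$ to first order. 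Since $T_\lambda(u)T_\mu(v)-T_\mu(v)T_\lambda(u)$ is a polynomial in $u,v$ whose coefficients are limits of the corresponding vanishing coefficients for $\mathsf{T}$, commutativity passes to the limit. The same degeneration identifies the coefficients of $T_{(1^i)}(u)$ with those of the universal differential operator $\dop^\betheh$ of \cite{mukhin_tarasov_varchenko06}, which is the content of \cref{equality_columns}.

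For part \ref{ALTZ_properties_determinant}: the quantum Jacobi--Trudi (Cherednik--Bazhanov--Reshetikhin) identity for $Y(\gl_N)$ expresses $\mathsf{T}_\lambda(u)$, normalized by the quantum-determinant factor that degenerates to $(u+z_1)\cdots(u+z_n)$, as a determinant of the $\mathsf{T}_{(1^m)}$'s at spectral parameters shifted by integer multiples of $\hbar$. In the limit a shift $\mathsf{T}_{(1^m)}(u-\ell\hbar)$ Taylor-expands as $\sum_{k\ge0}\frac{(-\ell\hbar)^k}{k!}\du{u}{k}\mathsf{T}_{(1^m)}(u)$; resumming the determinant and extracting the $\hbar^0$-coefficient converts the discrete shifts into the binomial-weighted combination $\sum_k(-1)^k\binom{j-1}{k}\du{u}{k}T_{(1^{\conjugate{\lambda}_i-i+j-k})}(u)$ of \eqref{equation_ALTZ_determinant}. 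I expect the main obstacle to be exactly this bookkeeping: fixing the normalization and the rescaling of $u$ so that the twist contributes the operators $\dd_k$ with the correct coefficients and the CBR spectral shifts collapse to precisely the stated binomial derivative pattern rather than some other resummation. A Yangian-free alternative would be to prove both identities directly from \eqref{defn_Tlambda} — the determinant formula by induction on $\lambda_1$, using that $\dd$ obeys a Leibniz rule together with the dual Jacobi--Trudi identity $s_\lambda=\det(e_{\conjugate{\lambda}_i-i+j})_{1\le i,j\le\lambda_1}$ for Schur polynomials, and commutativity by appealing, once \cref{equality_columns} is available, to the commutativity of the Bethe algebra generated by the coefficients of $\dop^\betheh$; in that route establishing commutativity would be the principal difficulty.
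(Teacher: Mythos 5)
The paper does not give its own proof of \cref{ALTZ_properties}: it is stated with a citation to \cite{alexandrov_leurent_tsuboi_zabrodin14} and closed with a \texttt{qed} mark, exactly as with \cref{MTV_SS}, \cref{MTV_operator}, etc. So there is no in-paper argument to compare against step by step. That said, your reconstruction of the cited proof is consistent with the hints the paper does give. For part \ref{ALTZ_properties_commute}, the paper explicitly remarks that ``the commutation relations \eqref{Tlambda_commute} are a straightforward consequence of the Yang--Baxter equation, since the $T_\lambda(u)$'s are appropriate limits of Yangian transfer matrices,'' which is precisely your fusion/$RLL$ argument followed by passage to the Gaudin limit. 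For part \ref{ALTZ_properties_determinant}, your account of the Cherednik--Bazhanov--Reshetikhin identity degenerating, via Taylor expansion of the $\hbar$-shifts, into the binomial-weighted derivative determinant matches the mechanism one sees in the proof of \cref{equality_columns}, which uses the same $\eta^{-i}\mathcal{S}_{i,e^{\eta h}}$ rescaling and the $\widetilde{\mathsf T}_{(1^i)}(u)$ combination from \cite[Section~3]{alexandrov_leurent_tsuboi_zabrodin14}.

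One caution on your proposed Yangian-free alternative. You suggest deducing commutativity from \cref{equality_columns} together with the commutativity of $\betheh$. That route is viable for single-column operators $T_{(1^i)}(u)$, but to extend it to arbitrary $T_\lambda(u)$ you would first need part \ref{ALTZ_properties_determinant} (or \cref{Tlambda_generates}, which the paper derives from \eqref{equation_ALTZ_determinant} and \eqref{equation_equality_columns}); note also that the paper's proof of \cref{equality_columns} itself already invokes the Yangian/Talalaev construction, so this ``Yangian-free'' route is not actually free of it. If you want a genuinely self-contained direct proof of \eqref{equation_ALTZ_determinant} from \eqref{defn_Tlambda}, you would need to show that the matrix derivatives $\dd_k$ satisfy the appropriate commutation with the binomial derivative pattern, since a naive application of the dual Jacobi--Trudi identity for Schur polynomials does not by itself produce the $\du{u}{k}$ terms; that bookkeeping is exactly where the CBR degeneration does the work in \cite{alexandrov_leurent_tsuboi_zabrodin14}.
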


We mention that the commutation relations \eqref{Tlambda_commute} are a straightforward consequence of the Yang--Baxter equation, since the $T_\lambda(u)$'s are appropriate limits of Yangian transfer matrices.

\subsection{Universal differential operator}\label{sec_dop}
Given an $N$-dimensional vector space $V\in\Gr(N,\entire)$ of entire functions, the monic linear differential operator $\DD_V$ of order $N$ with kernel $V$ applied to a function $g$ is given by
\begin{align}\label{differential_operator}
\DD_V(g) = \frac{\Wr(f_1, \dots, f_N, g)}{\Wr(f_1, \dots, f_N)} = \du{u}{N}g + \cdots.
\end{align}
We have the following formula for $\DD_V$:
\begin{lem}\label{D_kernel}
Let $V\in\Gr(N,\entire)$ be a space of entire functions. Then
$$
\DD_V = \sum_{i=0}^N(-1)^i\hspace*{1pt}\frac{\Delta_{(1^i)}(V(u))}{\Delta_\varnothing(V(u))}\hspace*{1pt}\du{u}{N-i}.
$$
\end{lem}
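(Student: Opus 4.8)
The plan is to compute $\DD_V$ directly from its defining formula \eqref{differential_operator} by a cofactor expansion. Fix a basis $(f_1,\dots,f_N)$ of $V$. For a test function $g\in\entire$, consider the $(N+1)\times(N+1)$ matrix whose $(r,c)$-entry is $\du{u}{r-1}f_c$ for $1\le c\le N$ and $\du{u}{r-1}g$ for $c=N+1$, so that its determinant is $\Wr(f_1,\dots,f_N,g)$. Expanding along the last column (Laplace expansion, applied pointwise in $u$) gives
$$
\Wr(f_1,\dots,f_N,g) = \sum_{i=0}^N (-1)^{i+N}\,\du{u}{i}g \cdot M_i,
$$
where $M_i$ is the minor obtained by deleting row $i+1$ (i.e.\ derivative order $i$) and the last column, namely $M_i = \det(\du{u}{d}f_j)_{d\in\{0,\dots,N\}\setminus\{i\},\,1\le j\le N}$.

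The second step is to recognize each $M_i$ as a single-column Pl\"ucker coordinate. The row set $\{0,1,\dots,N\}\setminus\{i\}$ is exactly the index set $\{\lambda_N,\lambda_{N-1}+1,\dots,\lambda_1+N-1\}$ for the partition $\lambda=(1^{N-i})$: indeed, for $\lambda=(1^k)$ the $j$-th entry of this set equals $j-1$ when $j\le N-k$ and equals $j$ when $j>N-k$, so the unique missing value is $N-k$. Hence, comparing with \eqref{plucker_translation}, we get $M_i = \Delta_{(1^{N-i})}(V(u))$, now viewed as an entire function of $u$.

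The final step is bookkeeping. Substituting $M_i=\Delta_{(1^{N-i})}(V(u))$ and reindexing by $j=N-i$ turns the sum into $\sum_{j=0}^N(-1)^j\,\du{u}{N-j}g\cdot\Delta_{(1^j)}(V(u))$, since $(-1)^{i+N}=(-1)^{2N-j}=(-1)^j$. Dividing by $\Wr(f_1,\dots,f_N)=\Delta_\varnothing(V(u))$ (using \eqref{equation_wronskian_empty}), and noting that $\Wr(V)$ is a nonzero entire function and hence has isolated zeros, we obtain the claimed identity as an equality of differential operators whose coefficients are ratios of entire functions. (The identity is independent of the chosen basis, since changing the basis rescales numerator and denominator by the same factor.)

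I expect the only real obstacle to be the sign and index bookkeeping in the first two steps: verifying that deleting derivative order $i$ corresponds precisely to the column partition $(1^{N-i})$, and that the cofactor sign $(-1)^{i+N}$ collapses to $(-1)^j$ after reindexing. There is no conceptual difficulty.
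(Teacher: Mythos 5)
Your proof is correct and follows exactly the paper's approach: write $\Wr(f_1,\dots,f_N,g)$ as the $(N+1)\times(N+1)$ determinant from \eqref{wronskian_formula}, Laplace-expand along the last column, and identify the complementary minors as $\Delta_{(1^{N-i})}(V(u))$ via \eqref{plucker_translation}. The sign and index bookkeeping you spell out checks out, and the paper gives the same argument in compressed form.
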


\begin{proof}
This follows from \eqref{differential_operator} by writing $\Wr(f_1, \dots, f_N, g)$ as in \eqref{wronskian_formula}, expanding the determinant along the last column, and applying \eqref{plucker_translation}.
\end{proof}

For spaces $V$ of quasi-exponentials, a universal formula for $\DD_V$ was obtained in \cite{mukhin_tarasov_varchenko06,mukhin_tarasov_varchenko08}. Namely, we define the \defn{universal differential operator}
\begin{align}\label{defn_universal}
\dop^\betheh := \du{u}{N} + \sum_{i=1}^NB_i(u)\du{u}{N-i}, \quad \text{ where } B_i(u) = \sum_{j=0}^\infty B_{i,j}u^{-j}
\end{align}
is a certain formal power series in $u^{-1}$ with coefficients $B_{i,j} \in U(\gl_N[t])$ depending on $h_1, \dots, h_N\in\C$; see \cite[Section 3.1]{mukhin_tarasov_varchenko08} for the explicit definition. The coefficients $B_{i,j}$ pairwise commute and generate the \defn{$\gl_N$-Bethe algebra (for the Gaudin model)} $\betheh\subseteq U(\gl_N[t])$. 

For each $z\in\C$, the Bethe algebra $\betheh$ acts on the \defn{evaluation module} $\C^N(z)$, which is isomorphic to $\C^N$ as a vector space with $t$ acting as multiplication by $z$. Let $\betheh(z_1, \dots, z_n)$ denote the image of $\betheh$ in $\End(\C^N(-z_1) \otimes \cdots \otimes \C^N(-z_n))$. Then $\dop^\betheh$ is universal for spaces of quasi-exponentials in the following sense:
\begin{thm}{\cite[Theorem 7.1]{mukhin_tarasov_varchenko08}}\label{MTV_operator}
Let $h_1, \dots, h_N\in \C$ and $z_1, \dots, z_n\in \C$.
\begin{enumerate}[label=(\roman*), leftmargin=*, itemsep=2pt]
\item\label{MTV_operator_forward} If $E\subseteq (\C^N)^{\otimes n}$ is an eigenspace of $\betheh(z_1, \dots, z_n)$, then the kernel of the differential operator obtained from $\dop^\betheh$ by replacing each $B_{i,j}$ by its eigenvalue is a space of quasi-exponentials $V_E\in\Gr(N,\entire)$ with exponents $h_1, \dots, h_N$ and Wronskian $e^{(h_1 + \cdots + h_N)u}(u+z_1)\cdots (u+z_n)$.
\item\label{MTV_operator_backward} Every space of quasi-exponentials $V\in\Gr(N,\entire)$ with exponents $h_1, \dots, h_N$ and Wronskian $e^{(h_1 + \cdots + h_N)u}(u+z_1)\cdots (u+z_n)$ equals $V_E$ for some eigenspace $E$, as in part \ref{MTV_operator_forward}.\hfill\qed
\end{enumerate}
\end{thm}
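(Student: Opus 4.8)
The plan is to realize, for each joint eigenspace $E$ of the Bethe algebra, the specialized operator $\dop_E$ as a differential operator with prescribed local structure, to recover a space of quasi-exponentials as its kernel, and then to match the two sides by a dimension count. For part \ref{MTV_operator_forward}: fix a joint eigenspace $E$ of $\betheh(z_1,\dots,z_n)$ and let $\dop_E = \du{u}{N} + \sum_{i=1}^N b_i(u)\,\du{u}{N-i}$ be the operator obtained from $\dop^\betheh$ by replacing each $B_{i,j}$ by its eigenvalue on $E$. Using the explicit formula for $\dop^\betheh$ from \cite[Section 3.1]{mukhin_tarasov_varchenko08} as a noncommutative determinant of an $N\times N$ matrix of first-order differential operators with coefficients in $U(\gl_N[t])$, one checks that on the tensor product $\C^N(-z_1)\otimes\cdots\otimes\C^N(-z_n)$ the coefficients $b_i(u)$ become rational functions of $u$ whose only poles lie at $u=-z_1,\dots,-z_n$; that each such point is an apparent singularity (trivial local monodromy, integer local exponents, no logarithmic solutions), which is the structural hallmark of opers arising from the vector-representation Gaudin model; and that at $u=\infty$ the operator is a twist by $\Diag(h_1,\dots,h_N)$. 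A basis of $\Ker\dop_E$ then consists of functions that are holomorphic at every $-z_k$ and asymptotic at $\infty$ to $e^{h_iu}$ times a power series, so each is genuinely of the form $e^{h_iu}p_i(u)$ with $p_i$ a polynomial, i.e.\ $V_E := \Ker\dop_E \in \Gr(N,\entire)$ is a space of quasi-exponentials with exponents $h_1,\dots,h_N$. Finally, Abel's identity gives $\Wr(V_E)'/\Wr(V_E) = -b_1(u)$, so it remains to verify that the specialized $b_1(u)$ equals $-(h_1+\cdots+h_N)-\sum_{k=1}^n(u+z_k)^{-1}$, a direct evaluation of the coefficient of $\du{u}{N-1}$ in $\dop^\betheh$ on the tensor product of vector representations; integrating recovers $\Wr(V_E) = e^{(h_1+\cdots+h_N)u}(u+z_1)\cdots(u+z_n)$ up to a scalar.

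For part \ref{MTV_operator_backward}, the harder direction, one must show that every space of quasi-exponentials with the prescribed exponents and Wronskian arises as some $V_E$. The strategy is a dimension count inside a fixed weight subspace. The Bethe algebra preserves the weight decomposition of $\C^N(-z_1)\otimes\cdots\otimes\C^N(-z_n)$; fix the weight subspace $W$ whose weight corresponds to polynomials $p_i$ of the relevant degrees, equivalently to a Wronskian of degree $n$. On one hand, part \ref{MTV_operator_forward} assigns to each joint eigenspace of $\betheh$ inside $W$ a space of quasi-exponentials with the prescribed data, and distinct eigenspaces give distinct spaces because the joint eigenvalue of the $B_{i,j}$ determines $\dop_E = \DD_{V_E}$ and hence $V_E$; so, counted with multiplicity, the number of such spaces is at most $\dim W$. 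On the other hand, the number of spaces of quasi-exponentials with fixed exponents and fixed Wronskian of degree $n$, counted with the scheme-theoretic multiplicity of the Wronski map, is also $\dim W$: for $z_1,\dots,z_n$ in general position this is the degree of the corresponding (generalized) Wronski map, which reduces via Schubert calculus and the representation theory of $\gl_N$ to a Kostka or Littlewood--Richardson number equal to $\dim W$, and the general case follows by a flatness and semicontinuity argument. Comparing the two counts forces the assignment $\{\text{eigenspaces}\}\to\{\text{spaces of quasi-exponentials}\}$ to be a bijection, in particular surjective, which is the claim.

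The main obstacle is part \ref{MTV_operator_backward}. Making the count rigorous requires: completeness of the Bethe ansatz for the $\gl_N$ Gaudin model, i.e.\ that $\betheh(z_1,\dots,z_n)$ acts semisimply with spectrum of the expected size, at least for $z_1,\dots,z_n$ in general position, which itself rests on the analysis of the master function and the Shapovalov and Gaudin forms; a semicontinuity argument to descend to arbitrary $z_k$; and the matching count for the degree of the Wronski map. An alternative to the completeness and Schubert-calculus inputs is the theory of populations of critical points: one verifies directly that the flag of Wronskians of $p_1,\dots,p_i$ attached to a space of quasi-exponentials solves the Bethe equations, producing the required Bethe eigenvector, and that every eigenspace is reached by some population; a third route packages everything through the Feigin--Frenkel identification of $\betheh$ with the algebra of functions on a space of opers with the prescribed singularities. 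The forward direction, by contrast, is essentially a structural analysis of the specialized operator and presents no comparable difficulty.
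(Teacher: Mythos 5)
The paper does not prove this theorem: it is quoted from \cite[Theorem~7.1]{mukhin_tarasov_varchenko08}, and the trailing $\qed$ in the statement marks it as an imported result rather than one established here. There is therefore no in-paper proof to compare your proposal against; what you have written is a reconstruction of the cited argument. As a reconstruction it is directionally sound but leaves the hardest steps as black boxes. For part (i), the structural analysis you describe (rational coefficients with poles only at $u=-z_k$, apparent singularities there, a twist by $\Diag(h_1,\dots,h_N)$ at infinity, Abel's identity giving $-b_1(u)$ as the logarithmic derivative of the Wronskian) is the right skeleton, but you slide past one nontrivial point: having shown a kernel element is holomorphic at every $-z_k$ and has exponential asymptotics $e^{h_iu}$ at infinity, you still need to argue that the accompanying formal power series \emph{terminates}, i.e., that the coefficient is a polynomial and not merely an entire function with that asymptotic; this does not follow from asymptotics alone and requires either analyzing the indicial behavior at $\infty$ or exhibiting a factorization of $\dop_E$ into first-order operators. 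For part (ii), a dimension count inside a weight subspace $W$ is the right strategy, but phrasing it as counting ``joint eigenspaces'' is imprecise when $\betheh(z_1,\dots,z_n)$ fails to be semisimple; the clean formulation, and what the cited source actually establishes, is an isomorphism of commutative algebras between $\betheh(z_1,\dots,z_n)\vert_W$ and the coordinate ring of the scheme-theoretic fiber of the Wronski map, which packages the scheme-length count automatically and needs no case split on semisimplicity. You correctly identify the load-bearing technical inputs (completeness/Bethe-ansatz epimorphism onto the fiber ring, computation of the Wronski-map degree, semicontinuity to descend from generic to arbitrary $z_k$), and those inputs genuinely are the content of the theorem; your write-up is a roadmap whose hardest legs are outsourced to the literature, which is in effect what the present paper does by citing rather than proving.
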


We mention that the proof of the reality theorem for spaces of quasi-exponentials (\cref{MTV_quasiexp}) in \cite[Section 7.1]{mukhin_tarasov_varchenko08} uses \cref{MTV_operator}, along with the fact that if $h_1, \dots, h_N$ and $z_1, \dots, z_n$ are real, then each $B_{i,j}$ is a self-adjoint operator, and hence has real eigenvalues.

The following result is our link between $\dop^\betheh$ and the $T_\lambda(u)$'s:
\begin{lem}\label{equality_columns}
For all $z_1, \dots, z_n\in\C$ and $1 \le i \le N$, we have
\begin{align}\label{equation_equality_columns}
B_i(u) = \frac{(-1)^i\,T_{(1^i)}(u)}{(u+z_1)\cdots(u+z_n)} \quad \text{ in } \End(\C^N(-z_1) \otimes \cdots \otimes \C^N(-z_n))
\end{align}
as rational functions of $u$.
\end{lem}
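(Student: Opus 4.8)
The plan is to verify \eqref{equation_equality_columns} as an identity of operators on $(\C^N)^{\otimes n}$ by computing each side explicitly and comparing; the essential new ingredient is a closed formula for the single-column operators $T_{(1^i)}(u)$ extracted from the matrix-derivative definition \eqref{defn_Tlambda}. Since $s_{(1^i)}(h) = e_i(h)$ is the $i$th elementary symmetric function of the eigenvalues of $h$, I would first write it as a trace over $i$ auxiliary copies of $\C^N$, namely $e_i(h) = \tfrac{1}{i!}\Tr_{\{\bar 1, \dots, \bar i\}}\big(A\,h^{(\bar 1)}\cdots h^{(\bar i)}\big)$, where $A := \sum_{\sigma \in \SG{i}}\sgn(\sigma)\,\sigma$ is the unnormalized antisymmetrizer acting on the auxiliary tensor factors $\bar 1, \dots, \bar i$. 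The key computation is that each matrix derivative turns an auxiliary copy of $h$ into the transposition operator linking that auxiliary factor to a physical factor, $\dd_k(h^{(l)}) = \sum_{1 \le a,b \le N}E_{a,b}^{(l)}E_{b,a}^{(k)}$, which generalizes the samples $\dd(\tfrac{1}{2}\Tr(h^2)) = h$ and $\dd_2\dd_1(\tfrac{1}{2}\Tr(h^2)) = \sigma_{1,2}$ from \cref{eg_Tlambda}. Applying $\dd_K$ to $e_i(h)$ via the Leibniz rule, using the symmetry of $A$ to collapse the resulting sum over which copies of $h$ get differentiated, and then contracting the auxiliary trace, yields $\dd_K(e_i(h))$ — which vanishes unless $|K| \le i$ — as an explicit operator on the tensor factors indexed by $K$, built from $h$ and an antisymmetrizer. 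Substituting into \eqref{defn_Tlambda_expanded} with each $z_k$ replaced by $z_k + u$ then presents $T_{(1^i)}(u)$ as an explicit polynomial of degree $n$ in $u$.

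On the other side, I would recall the explicit form of the universal differential operator $\dop^\betheh$ from \cite[Section 3.1]{mukhin_tarasov_varchenko08}: it is a determinant, with a prescribed ordering of its non-commuting entries, of an $N \times N$ matrix built from the twist $\Diag(h_1, \dots, h_N)$, the derivation $\partial_u$, and the rational terms $\sum_{k=1}^n E_{b,a}^{(k)}/(u + z_k)$. Multiplying its coefficient $B_i(u)$ of $\partial_u^{N-i}$ by $(u + z_1)\cdots(u + z_n)$ clears all denominators and yields a polynomial of degree $n$ in $u$ whose coefficient of $u^{n - |K|}$ (for $K \subseteq [n]$) is, by expansion of the determinant, a sum of products of $\gl_N$-generators placed in the tensor factors indexed by $K$, antisymmetrized in a suitable sense — visibly the same shape as the expansion of $T_{(1^i)}(u)$ obtained above. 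It then remains to check that the two expansions coincide, including the overall sign $(-1)^i$. It should suffice to match the $K = \varnothing$ piece (which recovers the elementary identity $\sum_{i=0}^N(-1)^i e_i(h)\partial_u^{N-i} = \prod_{j=1}^N(\partial_u - h_j)$, i.e.\ the case $n = 0$) together with the single-factor pieces, after which the common determinantal and antisymmetrized structure pins down the remaining terms; alternatively one can induct on $n$, matching the factor $u + z_n + \dd_n$ on the left with the adjunction of the $n$th evaluation module $\C^N(-z_n)$ on the right.

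I expect the main obstacle to be organizational rather than conceptual: carrying out the antisymmetrizer bookkeeping so that the subset-indexed terms in $\dd_K(e_i(h))$ match the corresponding minors in the determinant for $\dop^\betheh$, and reconciling the conventions of \cite{alexandrov_leurent_tsuboi_zabrodin14} with those of \cite{mukhin_tarasov_varchenko06,mukhin_tarasov_varchenko08} — in particular the placement of transposes on $h$, the sign of the evaluation parameters, and the normalization of the determinant, as flagged in the Notation paragraph of \cref{sec_introduction}. A convenient way to compress the combinatorics is to pass to generating functions: on the left, $\sum_{i \ge 0}t^i T_{(1^i)}(u) = (u + z_1 + \dd_1)\cdots(u + z_n + \dd_n)\det(1 + th)$, which via the Leibniz rule reduces everything to the identities $\dd_k(h^{(l)}) = \sum_{a,b}E_{a,b}^{(l)}E_{b,a}^{(k)}$ and $\dd(\det(1 + th)) = t\det(1 + th)(1 + th)^{-1}$; the resulting formula can be cross-checked against \cref{eg_Tlambda} for $i \le 2$. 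One might instead hope to deduce \eqref{equation_equality_columns} by recognizing both sides as coefficients of the universal differential operator of the $\gl_N$ Gaudin model, but turning this into a proof of the operator identity (rather than merely an eigenvalue-wise comparison) seems to require knowing in advance that the commuting families $\{B_{i,j}\}$ and $\{T_\lambda(u)\}$ share common eigenspaces, which is most naturally a consequence of \eqref{equation_equality_columns} rather than an input to it.
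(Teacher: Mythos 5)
Your proposal takes a genuinely different and more computational route than the paper. The paper's proof is essentially a one-step observation: both $B_i(u)$ and $T_{(1^i)}(u)$ arise from Talalaev's construction, and the relevant formulas in \cite{mukhin_tarasov_varchenko06} (namely \cite[(8.4), (10.3), (10.5)]{mukhin_tarasov_varchenko06}, giving $\mathcal{G}_{i,h}(u)=(-1)^iB_i(u)$ as the limit $\lim_{\eta\to 0}\eta^{-i}\mathcal{S}_{i,e^{\eta h}}(u)$ of an alternating combination of Yangian transfer matrices $\mathcal{T}_{j,g}(u)$) and in \cite[Section 3]{alexandrov_leurent_tsuboi_zabrodin14} (giving $T_{(1^i)}(u)/\prod_k(u+z_k)$ as the limit $\lim_{\eta\to 0}\eta^{-i}\widetilde{\mathsf{T}}_{(1^i)}(u)|_{g=e^{\eta h}}$ of the identical alternating combination of the same transfer matrices) are literally the same expression, so the identity is read off by matching definitions. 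This is an operator-level identity in the Yangian before the classical limit — not an eigenvalue-wise comparison — so your worry at the end about circularity does not apply to this route; the paper never invokes shared eigenspaces to establish \eqref{equation_equality_columns}.

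Your approach instead computes both sides from scratch: express $e_i(h)$ as an auxiliary partial trace against the antisymmetrizer, push the matrix derivatives $\dd_K$ through via $\dd_k(h^{(l)}) = \sum_{a,b}E_{a,b}^{(l)}E_{b,a}^{(k)}$ to get an explicit formula for $T_{(1^i)}(u)$, and separately expand the determinantal formula for $\dop^\betheh$ from \cite[Section 3.1]{mukhin_tarasov_varchenko08} after clearing denominators. This is plausible and the individual computational ingredients (the antisymmetrizer trace for $e_i$, the effect of $\dd_k$, the degree count) are sound and consistent with what the paper does elsewhere (compare \cref{eg_Tlambda} and the proof of \cref{Tlambda_formula}). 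What it buys you is a self-contained, elementary derivation that does not lean on the Yangian machinery or on tracing definitions across two references with conflicting conventions. What it costs you is exactly what you flag: substantial bookkeeping in aligning the two determinantal expansions, plus a genuine gap at the step where you claim that matching the $K=\varnothing$ and single-factor pieces ``pins down'' the rest by ``common determinantal and antisymmetrized structure.'' That is not a proof; without an explicit structural lemma (or the induction on $n$ you mention as an alternative, which would need to be worked out, since adjoining the $n$th evaluation module on the $\dop^\betheh$ side is not manifestly the same operation as multiplying by $(u+z_n+\dd_n)$ on the $T$ side), the argument is incomplete. The paper's Talalaev-based proof avoids all of this by recognizing both formulas as instances of one construction.
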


\begin{proof}
Briefly, this follows from the fact that both the universal differential operator $\dop^\betheh$ in \cite{mukhin_tarasov_varchenko06} and the operators $T_{(1^i)}(u)$ in \cite{alexandrov_leurent_tsuboi_zabrodin14} are obtained from a construction of Talalaev \cite{talalaev06}. Explicitly, following \cite[(8.4)]{mukhin_tarasov_varchenko06} we can write
$$
\dop^\betheh = \du{u}{N} + \sum_{i=1}^N(-1)^i\mathcal{G}_{i,h}(u)\du{u}{N-i}, \quad \text{ where } \mathcal{G}_{i,h}(u) = (-1)^iB_i(u).
$$
Now for $g\in\GL_N$ and $0 \le i \le N$, let $\mathcal{T}_{i,g}(u)\in Y(\gl_N)\otimes \C[[u^{-1}]]$ denote the Yangian transfer matrix as in \cite[(4.13)]{mukhin_tarasov_varchenko06}. By \cite[(10.3) and (10.5)]{mukhin_tarasov_varchenko06}, for all $1 \le i \le N$ we have
\begin{align}\label{MTV_S}
\mathcal{G}_{i,h}(u) = \lim_{\eta\to 0}(\eta^{-i}\mathcal{S}_{i,e^{\eta h}}), \quad \text{ where } \mathcal{S}_{i,g}(u) = \sum_{j=0}^i(-1)^{i-j}\binom{N-j}{N-i}\mathcal{T}_{j,g}(u).
\end{align}

On the other hand, in the notation of \cite[(2.3)]{alexandrov_leurent_tsuboi_zabrodin14}, the action of $\mathcal{T}_{i,g}(u)$ on $\C^N(-z_1) \otimes \cdots \otimes \C^N(-z_n)$ is denoted by $\mathsf{T}_\lambda(u)$ (with $x_k := -z_k$), where $\lambda = (1^i)$. By \cite[Section 3]{alexandrov_leurent_tsuboi_zabrodin14}, for all $1 \le i \le N$ we have
\begin{gather}\label{ALTZ_T}
\begin{multlined}
T_{(1^i)}(u) = (u+z_1) \cdots (u+z_n)\lim_{\eta\to 0}(\eta^{-i}\widetilde{\mathsf{T}}_{(1^i)}(u)\eval{g = e^{\eta h}}), \\[2pt] \hspace*{84pt} \text{ where } \widetilde{\mathsf{T}}_{(1^i)}(u) = \sum_{j=0}^i(-1)^{i-j}\binom{N-j}{N-i}\mathsf{T}_{(1^j)}(u).
\end{multlined}
\end{gather}
Comparing \eqref{MTV_S} and \eqref{ALTZ_T}, we obtain \eqref{equation_equality_columns}.
\end{proof}

\begin{cor}\label{Tlambda_generates}
We have $T_\lambda(t) \in \betheh(z_1, \dots, z_n)$ for all $t\in \C$ and partitions $\lambda$.
\end{cor}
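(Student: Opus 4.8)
The plan is to deduce this directly from \cref{equality_columns} together with the dual Jacobi--Trudi formula \eqref{equation_ALTZ_determinant} of \cref{ALTZ_properties}. First I would unwind \cref{equality_columns}: for $1 \le i \le N$ it asserts that $\frac{T_{(1^i)}(u)}{(u+z_1)\cdots(u+z_n)}$ equals $(-1)^iB_i(u)$ as an operator on $\C^N(-z_1)\otimes\cdots\otimes\C^N(-z_n)$, and expanding $B_i(u) = \sum_{j\ge 0}B_{i,j}u^{-j}$ shows that every coefficient of this $u^{-1}$-power series lies in $\betheh(z_1,\dots,z_n)$ by definition of the latter. The remaining single-column cases are immediate: $\frac{T_\varnothing(u)}{(u+z_1)\cdots(u+z_n)} = 1$, while $T_{(1^m)}(u) = 0$ for $m < 0$ or $m > N$, and both $0$ and the identity lie in $\betheh(z_1,\dots,z_n)$.

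Next I would substitute these into \eqref{equation_ALTZ_determinant}. Since $\betheh(z_1,\dots,z_n)$ is a commutative unital subalgebra of $\End((\C^N)^{\otimes n})$, and since forming $\C$-linear combinations, differentiating in $u$, and taking products all preserve the class of $u^{-1}$-power series with coefficients in $\betheh(z_1,\dots,z_n)$, the determinant on the right-hand side of \eqref{equation_ALTZ_determinant} — which is legitimate because its entries pairwise commute by \eqref{Tlambda_commute} — is again such a power series. Hence $\frac{T_\lambda(u)}{(u+z_1)\cdots(u+z_n)} = \sum_{j\ge 0}D_ju^{-j}$ with every $D_j\in\betheh(z_1,\dots,z_n)$.

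Finally I would clear the denominator. Writing $P(u) := (u+z_1)\cdots(u+z_n) = \sum_{a=0}^n e_a(z_1,\dots,z_n)u^{n-a}$, we get $T_\lambda(u) = P(u)\sum_{j\ge 0}D_ju^{-j}$; since $T_\lambda(u)$ is a polynomial in $u$ of degree at most $n$, comparing the coefficient of $u^{n-r}$ for each $0 \le r \le n$ expresses it as a finite $\C$-linear combination of the $D_j$, so $T_\lambda(u)$ is a polynomial in $u$ with coefficients in $\betheh(z_1,\dots,z_n)$. Specializing $u = t$ yields $T_\lambda(t)\in\betheh(z_1,\dots,z_n)$. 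I do not expect a genuine obstacle: the mathematical content is entirely contained in the already-established \cref{equality_columns} and \cref{ALTZ_properties}, and the only points needing a modicum of care are that $\betheh$ (hence its image) is unital, so that the determinant and products make sense inside it, and the routine bookkeeping in the last step that recovers a polynomial in $u$ with Bethe-algebra coefficients from a $u^{-1}$-power series divided by $P(u)$.
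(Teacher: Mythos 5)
Your proposal is correct and takes the same approach as the paper, which simply cites \eqref{equation_equality_columns} and \eqref{equation_ALTZ_determinant} without elaboration; you are spelling out the routine bookkeeping (commutativity of $\betheh$, closure under differentiation and linear combinations, clearing the denominator) that the paper leaves implicit.
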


\begin{proof}
This follows from \eqref{equation_ALTZ_determinant} and \eqref{equation_equality_columns}.
\end{proof}

We will also need the following property of $\betheh(z_1, \dots, z_n)$:
\begin{thm}{\cite[Theorem 9.1]{mukhin_tarasov_varchenko06}}\label{bethe_properties}~
For generic $z_1, \dots, z_n\in\C$, the algebra $\betheh(z_1, \dots, z_n)$ is semisimple (i.e.\ diagonalizable).\hfill\qed
\end{thm}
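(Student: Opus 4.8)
\emph{Proof proposal.} The plan is to realize $\betheh(z_1,\dots,z_n)$ as the algebra of functions on a finite $\C$-scheme and to show that this scheme is reduced for generic $z_1,\dots,z_n$. This suffices: a commutative subalgebra $\mathcal A\subseteq\End(W)$ of operators on a finite-dimensional space $W$ is semisimple --- equivalently, $W$ is the direct sum of the joint eigenspaces of $\mathcal A$, equivalently every element of $\mathcal A$ is diagonalizable --- if and only if $\mathcal A$ contains no nonzero nilpotent, i.e.\ the finite $\C$-scheme with coordinate ring $\mathcal A$ is reduced. So it is enough to produce a finite scheme $\mathcal X_z$ with $\betheh(z_1,\dots,z_n)\cong\mathcal O(\mathcal X_z)$ and to prove that $\mathcal X_z$ is reduced for generic $z$.

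To build $\mathcal X_z$, fix $N$, the exponents $h_1,\dots,h_N$, and $n$, and let $\mathcal Y$ be the (irreducible, smooth, quasi-projective) variety of $N$-dimensional spaces $V\in\Gr(N,\entire)$ of quasi-exponentials with exponents $h_1,\dots,h_N$ whose Wronskian has polynomial part of degree $n$; by \cref{D_kernel} this is the same as the variety of the associated monic order-$N$ differential operators $\DD_V$. Let $W\colon\mathcal Y\to\mathbb A^n$ be the Wronski map, sending $V$ to the monic degree-$n$ polynomial $g$ with $\Wr(V)=e^{(h_1+\cdots+h_N)u}g$, and let $\mathcal X_z$ be its scheme-theoretic fiber over $(u+z_1)\cdots(u+z_n)$. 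The key algebraic input, which is the substance of \cite{mukhin_tarasov_varchenko06}, is an algebra isomorphism $\betheh(z_1,\dots,z_n)\cong\mathcal O(\mathcal X_z)$ carrying each coefficient $B_{i,j}$ of $\dop^\betheh$ to the corresponding coordinate function (the $(i,j)$-coefficient of $\DD_V$ as $V$ ranges over $\mathcal X_z$); on closed points this records precisely the bijection of \cref{MTV_operator} together with \cref{D_kernel}, and its scheme-theoretic refinement relies on the $\betheh$-module $(\C^N)^{\otimes n}$ being flat over $\betheh$ and cyclic, generated by the tensor product of highest-weight vectors. For the geometric input, one checks (a Schubert-calculus dimension count, whose polynomial prototype is the equality of $\deg\Wr(V)$ with the dimension of the relevant Grassmannian) that $W$ is a finite morphism between irreducible varieties of the same dimension $n$; since $\mathcal Y$ is smooth and we are in characteristic $0$, $W$ is flat, so $\dim_\C\mathcal O(\mathcal X_z)=\deg W$ is independent of $z$, and generic smoothness of $W$ forces $\mathcal X_z$ to be smooth --- in particular reduced --- for generic $z_1,\dots,z_n$. (Alternatively, writing $\mathcal X_z$ as an intersection of Schubert-type subvarieties attached to the points $z_1,\dots,z_n$, one could invoke Kleiman's transversality theorem to obtain reducedness for generic $z$ directly.)

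I expect the main obstacle to be the identification $\betheh(z_1,\dots,z_n)\cong\mathcal O(\mathcal X_z)$, i.e.\ controlling the $\betheh$-module structure of $(\C^N)^{\otimes n}$ uniformly in $z$ (its flatness and cyclicity), which is where the weight decomposition, Shapovalov forms, and the Bethe-ansatz construction of eigenvectors enter. As a consistency check for why the statement should hold: by the remark following \cref{MTV_operator}, when $h_1,\dots,h_N$ and $z_1,\dots,z_n$ are real the operators $B_{i,j}$ are self-adjoint with respect to a suitable positive-definite Hermitian form on $(\C^N)^{\otimes n}$ --- the same fact that underlies \cref{MTV_quasiexp} --- hence simultaneously diagonalizable, so $\betheh(z_1,\dots,z_n)$ is already semisimple for real $z$. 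This alone does not give a Zariski-dense open locus, since diagonalizability is not a Zariski-open condition and $\betheh(z_1,\dots,z_n)$ is merely the image of $\betheh$; bridging the gap amounts to knowing that $\dim_\C\betheh(z_1,\dots,z_n)$ generically equals the number of joint eigenvalue systems, which is exactly what the isomorphism $\betheh(z_1,\dots,z_n)\cong\mathcal O(\mathcal X_z)$ encodes.
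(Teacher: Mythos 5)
The paper does not prove this statement; it is quoted from \cite{mukhin_tarasov_varchenko06} with a \verb|\qed| and no argument, so the comparison is against that reference rather than anything in the present text.

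Your route is coherent and, as a strategy, correct: reduce semisimplicity of $\betheh(z_1,\dots,z_n)$ to reducedness of the scheme-theoretic Wronski fiber $\mathcal X_z$ via an isomorphism $\betheh(z_1,\dots,z_n)\cong\mathcal O(\mathcal X_z)$, then invoke miracle flatness and generic smoothness in characteristic $0$. This is the geometric/Schubert-calculus packaging, closer in spirit to MTV's later work (the approach behind \cref{MTV_operator} and \cite{mukhin_tarasov_varchenko09a}, and to \cite{karp_purbhoo} in the polynomial case) than to what the cited 2006 reference actually does; there the argument is more directly representation-theoretic, combining self-adjointness of the $B_{i,j}$ with respect to a positive-definite (tensor Shapovalov) form at real parameters --- the same fact used for \cref{MTV_quasiexp} --- with a dimension-constancy statement coming from cyclicity of the Bethe module, which makes nondegeneracy of the trace form on $\betheh(z)$ a Zariski-open condition. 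Both packagings prove the same theorem; yours shifts the bookkeeping into the scheme-theoretic identification, the reference shifts it into cyclicity.

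Three small points to tighten. First, your $\mathcal Y$ is not irreducible: for fixed exponents $h_1,\dots,h_N$ it breaks into strata indexed by the degree sequence $(d_1,\dots,d_N)$ of the polynomial parts, with $n$ determined as in \cref{wronskian_degree}; the flatness and generic-smoothness argument should be run stratum by stratum, and one must be careful about whether $\mathcal O(\mathcal X_z)$ is to be taken over a single stratum or the union matching all weight subspaces of $(\C^N)^{\otimes n}$. Second, finiteness of the Wronski map on each stratum is itself a nontrivial MTV result (quoted in this paper as \cite[Proposition 4.2]{mukhin_tarasov_varchenko09c}), so you are not sidestepping the heavy input, only rearranging it --- which is fine, but worth flagging as an appeal to the literature rather than a dimension count one dashes off. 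Third, your closing consistency check slightly conflates the issue: ``$\dim_\C\betheh(z)$ equals the number of joint eigenvalue systems'' is itself equivalent to semisimplicity, not a bridge to it; what the isomorphism $\betheh(z)\cong\mathcal O(\mathcal X_z)$ actually delivers is that $\dim_\C\betheh(z)=\deg W$ is constant in $z$, and it is this constancy that turns semisimplicity (nondegeneracy of the trace form) into a Zariski-open condition whose real locus you already know to be nonempty.
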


\section{Jacobi--Trudi identities for Pl\"{u}cker coordinates}\label{sec_JT}

\noindent In this section, we prove two identities for the Pl\"{u}cker coordinates of $V\in\Gr(N,\entire)$ (see \cref{JT}). We call them \defn{Jacobi--Trudi identities} because in the case that $V$ is a space of pure exponentials, they recover the Jacobi--Trudi identities for Schur polynomials (see \cref{eg_JT}).

We first state \cref{JT}, and then devote the remainder of the section to its proof. Recall that $(i)$ denotes a single-row partition, and $(1^i)$ denotes a single-column partition. By convention, we set $\Delta_{(i)}(V),\hspace*{1pt} \Delta_{(1^i)}(V) := 0$ for $i < 0$.
\begin{thm}\label{JT}
Let $V\in\Gr(N,\entire)$ be a space of entire functions, and fix a scaling $(\Delta_\lambda(V(u)))_\lambda$ of the vector of Pl\"{u}cker coordinates of $V(u)$ so that every $\Delta_\lambda(V(u))$ is a meromorphic function, such as via \eqref{plucker_translation}.
\begin{enumerate}[label=(\roman*), leftmargin=*, itemsep=2pt]
\item\label{JT_1} (Jacobi--Trudi identity) For all partitions $\lambda$ and $m \ge \ell(\lambda)$, we have
\begin{align}\label{equation_JT_1}
\Delta_\lambda(V(u)) = \Delta_\varnothing(V(u))^{1-m}\cdot\det\hspace*{-1pt}\Bigg(\sum_{k=0}^{j-1}(-1)^k\binom{j-1}{k}\du{u}{k}\Delta_{(\lambda_i - i + j - k)}(V(u))\Bigg)_{\hspace*{-2pt}1 \le i,j \le m}.
\end{align}
\item\label{JT_2} (dual Jacobi--Trudi identity) For all partitions $\lambda$ and $m \ge \lambda_1$, we have
\begin{align}\label{equation_JT_2}
\Delta_\lambda(V(u)) = \Delta_\varnothing(V(u))^{1-m}\cdot\det\hspace*{-1pt}\Bigg(\sum_{k=0}^{j-1}(-1)^k\binom{j-1}{k}\du{u}{k}\Delta_{(1^{\conjugate{\lambda}_i - i + j - k})}(V(u))\Bigg)_{\hspace*{-2pt}1 \le i,j \le m}.
\end{align}
\end{enumerate}
\end{thm}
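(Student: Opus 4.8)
The plan is to deduce both identities from \cref{D_kernel}, which identifies the single-column coordinates $\Delta_{(1^i)}(V(u))$, up to the common factor $\Delta_\varnothing(V(u))=\Wr(V)$, with the coefficients of the monic order-$N$ differential operator $\DD_V$ whose kernel is $V$. The bridge between $\DD_V$ and an arbitrary Wronskian is the classical identity
\begin{equation*}
\Wr(f_1,\dots,f_N,g_1,\dots,g_k)=\Wr(f_1,\dots,f_N)\cdot\Wr(\DD_V g_1,\dots,\DD_V g_k)\tag{$\ast$}
\end{equation*}
valid for all entire functions $g_1,\dots,g_k$; it follows from the defining relation $\DD_V g=\Wr(f_1,\dots,f_N,g)/\Wr(f_1,\dots,f_N)$ in \eqref{differential_operator}, the elementary rule $\Wr(cg_1,\dots,cg_k)=c^k\Wr(g_1,\dots,g_k)$, and the Wronskian-of-Wronskians identity $\Wr(f_1,\dots,f_N,g_1,\dots,g_k)\cdot\Wr(f_1,\dots,f_N)^{k-1}=\Wr\bigl(\Wr(f_1,\dots,f_N,g_1),\dots,\Wr(f_1,\dots,f_N,g_k)\bigr)$, which is an instance of Sylvester's determinant identity.

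For part \ref{JT_2}, I would first reduce to the case $m=\lambda_1$: when $m>\lambda_1$, the rows and columns indexed by $i>\lambda_1$ make the matrix block-triangular, the extra diagonal block contributing a factor $\Delta_\varnothing(V(u))^{m-\lambda_1}$ that accounts for the discrepancy in the power of $\Delta_\varnothing(V(u))$; and when $\ell(\lambda)>N$ the matrix acquires a zero row, matching $\Delta_\lambda(V)=0$. So put $k:=\lambda_1$ and let $0\le c_1<\cdots<c_k$ be the complement of $\{\lambda_i+N-i:1\le i\le N\}$ in $\{0,1,\dots,N+\lambda_1-1\}$; one checks directly that $c_l=N-1+l-\conjugate{\lambda}_l$. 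Set $g_l(\tau):=(\tau-u)^{c_l}/c_l!$, so that $\du{\tau}{a}g_l|_{\tau=u}=\delta_{a,c_l}$. Expanding $\Wr(f_1,\dots,f_N,g_1,\dots,g_k)(\tau)$ at $\tau=u$ along its last $k$ columns yields $\pm\Delta_\lambda(V(u))$ by \eqref{plucker_translation}. On the other side, pull $\Wr(V)^{-k}$ out of $\Wr(\DD_V g_1,\dots,\DD_V g_k)$ using $\Wr(cg_1,\dots)=c^k\Wr(\dots)$, expand each $\DD_V g_l$ via the formula for $\DD_V$ in \cref{D_kernel}, apply the Leibniz rule to the derivatives in the remaining Wronskian determinant — this is exactly where the binomial coefficients of \eqref{equation_JT_2} enter — and use $\du{\tau}{a}g_l|_{\tau=u}=\delta_{a,c_l}$ to collapse the Leibniz sums. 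The relation $c_l=N-1+l-\conjugate{\lambda}_l$ is precisely what turns the surviving index into the $\conjugate{\lambda}_l-l+j-(\text{derivative order})$ appearing in \eqref{equation_JT_2}, so after factoring $(-1)^j$ from the columns and $(-1)^{\conjugate{\lambda}_i-i}$ from the rows and combining with the Laplace sign, the two sides of $(\ast)$ become $\pm$ of one another; a short computation shows the signs cancel to $+1$, giving \eqref{equation_JT_2} with $m=k$.

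Part \ref{JT_1} follows by a dual argument. The cleanest route is to reduce to spaces of polynomials — each coordinate $\Delta_\mu(V(u))$, expanded in powers of $u$, involves only finitely many Taylor coefficients of the $f_j$, which are realized by some polynomial space — and then to invoke the classical isomorphism $\Gr(N,\C^M)\cong\Gr(M-N,\C^M)$ given by orthogonal complement, which sends a partition to its complementary conjugate and so exchanges \eqref{equation_JT_1} with \eqref{equation_JT_2}. Alternatively, one can mimic part \ref{JT_2} directly: reducing $f_j^{(\lambda_i+N-i)}$ modulo $\DD_V$ and then using multiplicativity of the determinant expresses $\Delta_\lambda(V(u))/\Wr(V)$ as an $N\times N$ determinant whose last column records the single-row coordinates $\Delta_{(a)}(V(u))$, and manipulating that determinant against the recursion for the reductions of $\partial^a$ modulo $\DD_V$ produces \eqref{equation_JT_1}.

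The main obstacle is the combinatorial and sign bookkeeping concentrated in part \ref{JT_2}: proving $(\ast)$, verifying the index identity $c_l=N-1+l-\conjugate{\lambda}_l$, and tracking the Laplace, row, and column signs through to a clean $+1$. As a consistency check one can specialize $V$ to a space of pure exponentials $\langle e^{h_1u},\dots,e^{h_Nu}\rangle$, whereupon \eqref{equation_JT_1} and \eqref{equation_JT_2} become the usual Jacobi--Trudi and dual Jacobi--Trudi identities for the Schur polynomial $s_\lambda(h_1,\dots,h_N)$, which pins down every normalization.
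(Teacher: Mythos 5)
Your proposal is mathematically sound and takes a genuinely different route from the paper. The paper proves part~\ref{JT_1} directly, by building an explicit basis $(\hh_V(t,u),\hh^{(1)}_V(t,u),\dots,\hh^{(N-1)}_V(t,u))$ of $V$ out of the single-row Pl\"ucker coordinates $\Delta_{(i)}(V(t))$ (\cref{h_basis}) and then computing $\Delta_\lambda(V)$ in that basis; it then deduces part~\ref{JT_2} by reducing to polynomial spaces and invoking Grassmann duality (\cref{duality}). You reverse the roles: you prove part~\ref{JT_2} directly, via the classical Sylvester (Wronskian-of-Wronskians) identity applied to $\DD_V$ and the test polynomials $g_l(\tau)=(\tau-u)^{c_l}/c_l!$ with $c_l=N-1+l-\conjugate{\lambda}_l$, and then deduce part~\ref{JT_1} by the same reduction-to-polynomials plus duality argument. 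Both approaches share \cref{D_kernel} and \cref{duality} as common ingredients; the difference is which identity gets a direct proof and by what mechanism. Your route has the appeal of resting on a classical determinant identity rather than a custom-built basis, and it naturally explains why single-column coordinates are the right building blocks for~\ref{JT_2}; the paper's route has the appeal that the basis $\hh_V^{(j)}(t,u)$ is of independent interest and is reused elsewhere.

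Two points worth flagging. First, when you expand $\du{\tau}{j-1}(\DD_V g_l)\big|_{\tau=u}$ using the Leibniz rule and the coefficients $(-1)^i\Delta_{(1^i)}(V(\tau))/\Delta_\varnothing(V(\tau))$ of $\DD_V$, the resulting determinant has entries built from the ratios $\Delta_{(1^i)}/\Delta_\varnothing$ and their $u$-derivatives, not from $\Delta_{(1^i)}$ itself as in~\eqref{equation_JT_2}. Passing from one to the other is not merely a sign computation: it is a column-operation argument, equivalent to the fact that the determinant on the right of~\eqref{equation_JT_2} transforms by $\phi^m$ under the rescaling $\Delta_\mu\mapsto\phi\cdot\Delta_\mu$ (the rescaled matrix equals the original times an upper-triangular matrix with $\phi$ on the diagonal, by a Vandermonde-of-binomials identity). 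This scaling-covariance is exactly what makes the theorem well-posed for an arbitrary meromorphic scaling, so it needs to be stated and proved rather than folded into ``sign bookkeeping.'' Second, a small slip: \cref{duality} with the pairing defined in the paper sends a partition to its \emph{conjugate}, not its complementary conjugate; it is the conjugation alone that exchanges~\eqref{equation_JT_1} with~\eqref{equation_JT_2}. (The complement is already accounted for by how the dual space's degree bound interacts with the Pl\"ucker labels.) Neither issue is fatal, but both must be repaired for the argument to be complete.
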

The identity \eqref{equation_JT_1} for $m = \ell(\lambda)$ was obtained by Natanzon and Zabrodin \cite{natanzon_zabrodin16} in the more general setting of the infinite-dimensional Sato Grassmannian. Closely related identities for $T_\lambda(u)$ appear in \cite[(4.28) and (4.29)]{alexandrov_leurent_tsuboi_zabrodin14}.

\begin{example}\label{eg_JT}
Let $V := \langle e^{h_1u}, \dots, e^{h_Nu}\rangle$ be the space of pure exponentials from \cref{eg_positivity}. Then we may fix a scaling of the vector of Pl\"{u}cker coordinates of $V(u)$ so that
\begin{align}\label{equation_schur_plucker}
\Delta_\lambda(V(u)) = s_\lambda(h_1, \dots, h_N) \quad \text{ for all partitions $\lambda$ (independent of $u$)}.
\end{align}
Then \eqref{equation_JT_1} and \eqref{equation_JT_2} become, respectively,
\begin{align*}
&s_\lambda(h_1, \dots, h_N) = \det\hspace*{-1pt}\big(s_{(\lambda_i - i + j)}(h_1, \dots, h_N)\big)_{1 \le i,j \le m} && \hspace*{-24pt}\text{for all } m \ge \ell(\lambda); \text{ and} \\
&s_\lambda(h_1, \dots, h_N) = \det\hspace*{-1pt}\big(s_{(1^{\conjugate{\lambda}_i - i + j})}(h_1, \dots, h_N)\big)_{1 \le i,j \le m} && \hspace*{-24pt}\text{for all } m \ge \lambda_1.
\end{align*}
These are the well-known Jacobi--Trudi identities expressing Schur polynomials in terms of homogeneous symmetric polynomials and elementary symmetric polynomials, respectively \cite[Theorem 7.16]{stanley24}.
\end{example}

We outline the proof of \cref{JT}. In \cref{sec_basis}, we construct an explicit basis of $V\in\Gr(N,\entire)$ in terms of single-row Pl\"{u}cker coordinates $\Delta_{(i)}(V(u))$ (see \cref{h_basis}). This result can be proved using arguments in \cite[Section 5.2.3]{karp_purbhoo}, where such a basis was given for spaces $V$ of polynomials; we present a different and shorter proof. We then use this basis in \cref{sec_JT_1} to prove the first Jacobi--Trudi identity \eqref{equation_JT_1}. Finally, in \cref{sec_duality} we use Grassmann duality to prove the dual Jacobi--Trudi identity \eqref{equation_JT_2}.

\begin{rem}\label{remark_entire}
In this section, we state and prove our results for spaces $V\in\Gr(N,\entire)$ of entire functions, since that is convenient and sufficient for our purposes. However, our formulas are algebraic and hence hold more generally for spaces $V\in\Gr(N,\C[[u]])$ of formal power series, where $V(t)$ is regarded as a space of formal power series in $u$ over $\C[[t]]$. Similarly, results which depend only locally on $V(u)$ near $u=t$ (where $t\in\C$) hold for all spaces $V$ of functions which are holomorphic or real analytic at $u=t$.
\end{rem}

\subsection{A basis from single-row Pl\"{u}cker coordinates}\label{sec_basis}
We construct an explicit basis of any space $V\in\Gr(N,\entire)$ of entire functions. Fix a scaling $(\Delta_\lambda(V(u)))_\lambda$ of the vector of Pl\"{u}cker coordinates of $V(u)$ so that every $\Delta_\lambda(V(u))$ is an entire function, such as via \eqref{plucker_translation}. For $j\ge 0$, we define
$$
\hh_V(t,u) := \sum_{i\ge 0}\frac{\Delta_{(i)}(V(t))}{(N+i-1)!}(u-t)^{N+i-1} \quad \text{ and } \quad \hh^{(j)}_V(t,u) := \du{t}{j}\hh_V(t,u).
$$
(Our convention in defining $\hh_V(t,u)$ differs from \cite{karp_purbhoo}, in that we have negated $t$.) The fact that $\hh_V(t,u)$ and $\hh^{(j)}_V(t,u)$ are well-defined analytic functions of $(t,u)$ will follow from \eqref{h_determinant_formula} below.
\begin{example}\label{eg_h}
Let $V := \langle e^{h_1u}, \dots, e^{h_Nu}\rangle$ be the space of pure exponentials from \cref{eg_positivity,eg_JT}, and fix the scaling \eqref{equation_schur_plucker} of the vector of Pl\"{u}cker coordinates of $V(u)$. Then
\begin{gather*}
\hh_V(t,u) = \sum_{i\ge 0}\frac{s_{(i)}(h_1, \dots, h_N)}{(N+i-1)!}(u-t)^{N+i-1}.\qedhere
\end{gather*}
\end{example}

\begin{rem}\label{change_of_scaling}
Note that given a choice of scaling $(\Delta_\lambda(V(u)))_\lambda$ so that every $\Delta_\lambda(V(u))$ is an entire function, every other scaling can be written as $(\phi(u)\cdot\Delta_\lambda(V(u)))_\lambda$, where $\phi(u)$ is a meromorphic function. This change of scaling replaces $\hh_V(t,u)$ by $\phi(t)\cdot\hh_V(t,u)$, and hence replaces $\hh^{(j)}_V(t,u)$ by $\sum_{k=0}^j\binom{j}{k}\du{t}{j-k}\phi(t)\cdot\hh^{(k)}_V(t,u)$.
\end{rem}

The main result of this subsection is as follows:
\begin{thm}\label{h_basis}
Let $V\in\Gr(N,\entire)$ be a space of entire functions in the variable $u$. Then
\begin{align}\label{h_span_equation}
V = \langle \hh_V(t,u) : t\in\C\rangle.
\end{align}
Moreover, for every $t\in\C$ which is not a zero of $\Wr(V)$, the following is a basis of $V$:
\begin{align}\label{h_basis_equation}
(\hh_V(t,u), \hh^{(1)}_V(t,u), \dots, \hh^{(N-1)}_V(t,u)).
\end{align}
\end{thm}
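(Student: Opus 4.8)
The plan is to reduce the whole statement to an explicit determinant formula for $\hh_V(t,u)$, which I expect is the identity \eqref{h_determinant_formula}. Fix a basis $(f_1,\dots,f_N)$ of $V$ and use the scaling of the Pl\"ucker coordinates from \eqref{plucker_translation}, so that $\Delta_{(i)}(V(t))$ is the $N\times N$ determinant with rows $\bigl(f_j^{(0)}(t)\bigr)_{j},\dots,\bigl(f_j^{(N-2)}(t)\bigr)_{j},\bigl(f_j^{(N-1+i)}(t)\bigr)_{j}$. Substituting this into the definition of $\hh_V(t,u)$, pulling the sum over $i$ inside the determinant by multilinearity in the last row, recognizing $\sum_{i\ge 0}\frac{(u-t)^{N-1+i}}{(N-1+i)!}f_j^{(N-1+i)}(t)$ as the Taylor expansion of $f_j$ about $u=t$ with its terms of degree $<N-1$ deleted, and then adding those terms back in as multiples of the first $N-1$ rows, I obtain
\[
\hh_V(t,u)=\det\begin{bmatrix}
f_1(t) & \cdots & f_N(t)\\
f_1'(t) & \cdots & f_N'(t)\\
\vdots & & \vdots\\
f_1^{(N-2)}(t) & \cdots & f_N^{(N-2)}(t)\\
f_1(u) & \cdots & f_N(u)
\end{bmatrix}.
\]
In particular $\hh_V(t,u)$, and hence each $\hh^{(j)}_V(t,u)$, is an analytic function of $(t,u)$, as asserted after the statement; for any other entire scaling one multiplies this by the corresponding factor $\phi(t)$ from \cref{change_of_scaling}.

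Expanding the determinant along its last row gives $\hh_V(t,u)=\sum_{\ell=1}^{N}g_\ell(t)f_\ell(u)$, where $g_\ell(t)=\pm\Wr(f_1,\dots,\widehat{f_\ell},\dots,f_N)(t)$ is entire and independent of $u$. Hence $\hh_V(t,\cdot)\in V$ for every $t\in\C$, and differentiating in $t$ gives $\hh^{(j)}_V(t,\cdot)=\sum_\ell g_\ell^{(j)}(t)f_\ell(\cdot)\in V$ for all $j$ and $t$. This already yields $\langle\hh_V(t,u):t\in\C\rangle\subseteq V$, one inclusion of \eqref{h_span_equation}.

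For the basis assertion, fix $t\in\C$ with $\Wr(V)(t)\ne 0$. Since the $N$ functions $\hh^{(j)}_V(t,\cdot)$ $(0\le j\le N-1)$ all lie in the $N$-dimensional space $V$, it suffices to show they are linearly independent; and since $\sum_j c_j\hh^{(j)}_V(t,u)\equiv 0$ in $u$ forces $\sum_j c_j\,\du{u}{k}\hh^{(j)}_V(t,u)\eval{u=t}=0$ for every $k$, it is enough to prove that $M:=\bigl(\du{u}{k}\hh^{(j)}_V(t,u)\eval{u=t}\bigr)_{0\le j,k\le N-1}$ is invertible. By the determinant formula, $\du{t}{j}\du{u}{k}\hh_V(t,u)$ is --- via the Leibniz rule on the $t$-dependent rows and $f_\ell\mapsto f_\ell^{(k)}$ on the last row --- an $\mathbb{N}$-linear combination (with multinomial coefficients) of determinants whose rows are $f_\ell^{(i+p_i)}(t)$ $(0\le i\le N-2,\ p_i\ge 0)$ and $f_\ell^{(k)}(u)$; upon setting $u=t$, each such determinant vanishes unless its $N$ row indices are distinct. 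Their total is $\binom{N-1}{2}+j+k$, which must be at least $\binom{N}{2}$ for $N$ distinct nonnegative integers, so $M_{j,k}=0$ whenever $j+k<N-1$ (thus $M$ is anti-triangular), and when $j+k=N-1$ the only admissible row set is $\{0,1,\dots,N-1\}$, so $M_{j,N-1-j}=c_j\,\Wr(V)(t)$ for a universal integer $c_j$ not depending on $V$. Therefore $\det M=\pm\bigl(\prod_j c_j\bigr)\Wr(V)(t)^N$.

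It remains to check $\prod_j c_j\ne 0$, and since the $c_j$ are $V$-independent I will verify this for $V=\langle 1,u,\dots,u^{N-1}\rangle$: here $\Delta_{(i)}(V(t))=0$ for all $i\ge 1$, so $\hh_V(t,u)$ is a nonzero constant multiple of $(u-t)^{N-1}$ and $\hh^{(j)}_V(t,u)$ a nonzero constant multiple of $(u-t)^{N-1-j}$, which are manifestly linearly independent, so the corresponding $M$ is invertible while $\Wr(V)(t)$ is a nonzero constant --- forcing $\prod_j c_j\ne 0$. Hence $\det M\ne 0$ precisely when $\Wr(V)(t)\ne 0$, which proves that \eqref{h_basis_equation} is a basis of $V$ for every such $t$; and since $\Wr(V)\not\equiv 0$ at least one such $t$ exists, so $V=\langle\hh^{(j)}_V(t,\cdot):0\le j\le N-1\rangle\subseteq\langle\hh_V(s,u):s\in\C\rangle\subseteq V$, giving \eqref{h_span_equation}. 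The main obstacle is the bookkeeping in the third paragraph --- establishing the anti-triangular shape of $M$ and pinning its anti-diagonal entries to $\Wr(V)(t)$ times $V$-independent constants --- which is elementary but requires care in expanding the determinant and tracking which multisets of rows can occur; the example in the last paragraph is exactly what lets me avoid computing the $c_j$.
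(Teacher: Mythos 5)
Your proposal is correct and follows essentially the same route as the paper: establish the determinant formula \eqref{h_determinant_formula}, expand along the bottom row to see $\hh_V(t,u)\in V$ (hence each $\hh^{(j)}_V(t,\cdot)\in V$), and show linear independence of the $\hh^{(j)}_V(t,\cdot)$ by exploiting the orders of vanishing at $u=t$. The only real difference is in the last step. You pass through the matrix $M=\bigl(\du{u}{k}\hh^{(j)}_V(t,u)\eval{u=t}\bigr)$, establish its anti-triangularity by a degree count on row indices, note that the anti-diagonal entries are $V$-independent constants times $\Wr(V)(t)$, and sidestep computing those constants via the monomial example. This works, but the paper gets the same conclusion more directly: from the definition
\[
\hh_V(t,u)=\sum_{i\ge 0}\frac{\Delta_{(i)}(V(t))}{(N+i-1)!}(u-t)^{N+i-1}
\]
one reads off immediately (by induction on $j$, differentiating in $t$ and tracking the term of lowest order in $(u-t)$) that
\[
\hh^{(j)}_V(t,u)=\frac{(-1)^j\,\Delta_\varnothing(V(t))}{(N-j-1)!}(u-t)^{N-j-1}+\text{(higher order in $(u-t)$)},
\]
so the orders of vanishing are distinct and the leading coefficients are explicitly $\pm\Delta_\varnothing(V(t))=\pm\Wr(V)(t)\ne 0$. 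This gives you the constants $c_j=(-1)^j$ for free and avoids both the Leibniz-rule bookkeeping and the auxiliary example. You might also note that your derivation of \eqref{h_determinant_formula} (summing into the last row, then completing the Taylor series via row operations) is the reverse direction of the paper's (Taylor-expanding the last row of the determinant and then expanding), but these are of course equivalent.
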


\begin{proof}
By \cref{change_of_scaling}, it suffices to prove the result for a single choice of scaling $(\Delta_\lambda(V(u)))_\lambda$. Let $(f_1, \dots, f_N)$ be a basis of $V$, and fix the scaling \eqref{plucker_translation} of the vector of Pl\"{u}cker coordinates of $V(t)$ for all $t\in\C$. We claim that
\begin{align}\label{h_determinant_formula}
\hh_V(t,u) = \det\begin{bmatrix}
f_1(t) & \cdots & f_N(t) \\[2pt]
f_1'(t) & \cdots & f_N'(t) \\
\vdots & \ddots & \vdots \\[2pt]
f_1^{(N-2)}(t) & \cdots & f_N^{(N-2)}(t) \\[2pt]
f_1(u) & \cdots & f_N(u)
\end{bmatrix}.
\end{align}
To see this, we write each $f_j(u)$ in the last row as a Taylor series about $u=t$, and then expand the determinant along the last row.

Notice that \eqref{h_determinant_formula} implies that $\hh^{(j)}_V(t,u)$ is a well-defined analytic function of $(t,u)$. Also, expanding the determinant in \eqref{h_determinant_formula} along the last row shows that $\hh_V(t,u)\in V$. We then get that $\hh^{(j)}(t,u) \in V$ as a consequence of the general property of derivatives
\begin{align}\label{h_derivative}
\hh^{(j)}_V(t,u) \in \langle \hh_V(s,u) : s\in\C\rangle \quad \text{ for all } j\ge 0.
\end{align}

Therefore \eqref{h_basis_equation} is contained in $V$. To show that it is a basis of $V$ (assuming $t\in\C$ is not a zero of $\Wr(V)$), it suffices to show that it is linearly independent. To see this, observe that
$$
\hh^{(j)}_V(t,u) = \frac{(-1)^j\Delta_\varnothing(V(t))}{(N-j-1)!}(u-t)^{N-j-1} + \text{higher-degree terms in $(u-t)$}
$$
for all $0 \le j \le N-1$, and $\Delta_\varnothing(V(t)) \neq 0$ by \eqref{equation_wronskian_empty}.

Finally, we prove \eqref{h_span_equation}. We showed above that $\langle \hh_V(t,u) : t\in\C\rangle \subseteq V$. The reverse containment follows from \eqref{h_derivative}, using any basis of the form \eqref{h_basis_equation}.
\end{proof}

\begin{rem}
In the setting of \cref{h_basis}, if $t$ is a zero of $\Wr(V)$ and $N\ge 2$, then \eqref{h_basis_equation} is linearly dependent. This was proved in \cite[Remark 5.16]{karp_purbhoo} for spaces of polynomials, and a similar argument applies to spaces of entire functions.
\end{rem}

\subsection{Proof of the first Jacobi--Trudi identity}\label{sec_JT_1}
We prove \eqref{equation_JT_1}. Notice that the matrix in \eqref{equation_JT_1} in rows $\ell(\lambda)+1, \dots, m$ is upper-triangular with $\Delta_\varnothing(V(u))$ on the diagonal, so the right-hand side of \eqref{equation_JT_1} does not depend on $m \ge \ell(\lambda)$. Similarly, \eqref{equation_JT_1} holds when $\lambda = \varnothing$. Therefore it suffices to prove \eqref{equation_JT_1} up to rescaling for a single choice of $m\ge\ell(\lambda)$. One such proof is provided by \cite[Theorem 2.1]{natanzon_zabrodin16}. We give a different argument using \cref{sec_basis}.

By \cref{remark_entire}, we may assume that the scaling $(\Delta_\lambda(V(u)))_\lambda$ consists of entire functions. It suffices to prove \eqref{equation_JT_1} at $u=t$ for every $t\in\C$ which is not a zero of $\Wr(V)$. By translation, we may assume that $t=0$. First we consider the case when $\ell(\lambda)\le N$, so that we may take $m=N$. By \eqref{plucker_translation} and using the basis of $V$ from \eqref{h_basis_equation} with $t=0$, we get
$$
\Delta_\lambda(V) = \det\hspace*{-1pt}\big((-1)^{j-1}\du{u}{\lambda_i+N-i}\eval{u=0}\hh^{(j-1)}_V(0,u)\big)_{1 \le i,j \le N}.
$$
Then calculating that
$$
\du{u}{i}\eval{u=0}\hh^{(j)}_V(0,u) = \sum_{k=0}^j(-1)^{j-k}\binom{j}{k}\du{t}{k}\eval{t=0}\Delta_{(i+j+1-k-N)}(V(t)) \quad \text{ for all } i,j\ge 0,
$$
we obtain \eqref{equation_JT_1} up to rescaling when $m=N$.

Now we consider the case when $\ell(\lambda) > N$. We define
$$
W := \{f(u)\in\entire : \du{u}{\ell(\lambda) - N}f(u)\in V\} \in \Gr(\ell(\lambda), \entire),
$$
so that $\Delta_\mu(V) = \Delta_\mu(W)$ for all partitions $\mu$ (see \cite[Proposition 2.10(ii)]{karp_purbhoo}). We showed above that \eqref{equation_JT_1} holds for $W$ when $m = \ell(\lambda)$, so it also holds for $V$.\hfill\qed

\subsection{Grassmann duality and proof of the dual Jacobi--Trudi identity}\label{sec_duality}
We recall Grassmann duality for spaces of polynomials, following \cite[Section 5.3.1]{karp_purbhoo}, which we then use to prove the dual Jacobi--Trudi identity \eqref{equation_JT_2}. Let $\C[u]_{\le M-1}\cong\C^M$ denote the vector space of all polynomials of degree at most $M-1$. Define the nondegenerate bilinear pairing $(\cdot,\cdot)$ on $\C[u]_{\le M-1}$ by
$$
\bigg(\sum_{i=0}^{M-1}\frac{f_i}{i!}u^i, \sum_{j=0}^{M-1}\frac{g_j}{j!}u^j\bigg) := \sum_{i=0}^{M-1}(-1)^if_ig_{M-1-i}.
$$
Given $V\in\Gr(N,\C[u]_{\le M-1})$, define its \defn{dual}
$$
\conjugate{V} := \{f\in\C[u]_{\le M-1} : (f,g) = 0 \text{ for all } g\in V\} \in \Gr(M-N,\C[u]_{\le M-1}).
$$
Recall that $\conjugate{\lambda}$ denotes the conjugate partition of $\lambda$. We will need the following properties:
\begin{lem}{\cite[Proposition 5.17]{karp_purbhoo}}\label{duality}
Let $V\in\Gr(N,\C[u]_{\le M-1})$.
\begin{enumerate}[label=(\roman*), leftmargin=*, itemsep=2pt]
\item\label{duality_pluckers} Taking duals preserves  Pl\"{u}cker coordinates: $\Delta^\lambda(V) = \Delta^{\conjugate{\lambda}}(\conjugate{V})$ for all partitions $\lambda$.
\item\label{duality_translation} Taking duals commutes with translation: $\conjugate{V(t)} = \conjugate{V}(t)$ for all $t\in\C$.\hfill\qed
\end{enumerate}
\end{lem}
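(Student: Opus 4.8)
The plan is to reduce both parts to linear algebra on $\C^M$ by using the Taylor coefficients $f_0,\dots,f_{M-1}$ at $u=0$ as coordinates, so that $\sum_{i=0}^{M-1}\tfrac{f_i}{i!}u^i$ corresponds to $\sum_{i=0}^{M-1}f_i\evec{i}\in\C^M$. In these coordinates $(f,g)=\sum_{i=0}^{M-1}(-1)^if_ig_{M-1-i}$, which up to the overall sign $(-1)^{M-1}$ equals $\langle\rho(f),g\rangle$, where $\langle\cdot,\cdot\rangle$ is the standard symmetric bilinear form and $\rho$ is the linear automorphism $\evec{i}\mapsto(-1)^i\evec{M-1-i}$ (which squares to a scalar). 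Hence $\conjugate V=\rho(V^{\perp})$, where $V^{\perp}$ is the ordinary orthogonal complement; everything after this becomes a statement about $\rho$ and about standard Grassmann duality.

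For part \ref{duality_translation} I would show that the translation operator $\tau_t\colon f(u)\mapsto f(u+t)$ --- which preserves $\C[u]_{\le M-1}$ and satisfies $\tau_tV=V(t)$ --- is an isometry of $(\cdot,\cdot)$; orthogonal complements are then automatically $\tau_t$-equivariant, so $\conjugate{V(t)}=\conjugate{\tau_tV}=\tau_t\conjugate V=\conjugate V(t)$. Since $\tau_t=e^{t\du{u}{}}$, it suffices to check that $\du{u}{}$ is skew-adjoint for $(\cdot,\cdot)$, i.e.\ $(\du{u}{}f,g)+(f,\du{u}{}g)=0$; using $(\du{u}{}f)_i=f_{i+1}$ together with $f_M=g_M=0$, this is a one-line reindexing of the defining sum.

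For part \ref{duality_pluckers} I would write $R_N(\lambda):=\{\lambda_N,\lambda_{N-1}+1,\dots,\lambda_1+N-1\}$ for the $N$-subset of $\{0,\dots,M-1\}$ indexing the minor $\Delta^\lambda$, so that $\Delta^\lambda(V)=p_{R_N(\lambda)}(V)$ in terms of the usual Pl\"ucker coordinates $p_\bullet$ on $\Gr(N,\C^M)$, and then chain three standard facts, each valid up to a nonzero scalar depending only on $M$ and $N$ (harmless, since Pl\"ucker vectors are only defined up to rescaling). First, Hodge duality for $\langle\cdot,\cdot\rangle$: $p_S(V)=\pm\sgn(S,S^c)\,p_{S^c}(V^{\perp})$ for every $N$-subset $S$, where $S^c$ is the complement and $\sgn(S,S^c)=(-1)^{\sum_{s\in S}s-\binom N2}$ is the sign of the shuffle sorting $(S,S^c)$, coming from the perfect pairing $\wedge^N\C^M\times\wedge^{M-N}\C^M\to\wedge^M\C^M\cong\C$. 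Second, the effect of $\rho$: $p_T(\rho W)=\pm(-1)^{\sum_{t\in T}t}\,p_{\bar T}(W)$, with $\bar T:=\{M-1-t:t\in T\}$. Third, the combinatorial identity $\overline{R_N(\lambda)^c}=R_{M-N}(\conjugate\lambda)$, which is the boundary-path description of a partition inside an $N\times(M-N)$ box, under which complementing and reflecting the set of ``vertical-step positions'' corresponds to transposing the diagram. Chaining these, $\Delta^\lambda(V)=p_{R_N(\lambda)}(V)=\pm p_{R_N(\lambda)^c}(V^{\perp})=\pm p_{\overline{R_N(\lambda)^c}}(\rho V^{\perp})=\pm p_{R_{M-N}(\conjugate\lambda)}(\conjugate V)=\pm\Delta^{\conjugate\lambda}(\conjugate V)$.

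I expect the main obstacle to be checking that the accumulated sign in this last chain is independent of $\lambda$. The only $\lambda$-dependent contributions are the exponent $\sum_{s\in R_N(\lambda)}s$ from the Hodge step and the exponent $\sum_{t\in R_N(\lambda)^c}t$ from the $\rho$ step, and these exponents sum to $\sum_{k=0}^{M-1}k=\binom M2$, which is constant; so the $\lambda$-dependence cancels. This cancellation is exactly why the sign $(-1)^i$ was built into the definition of $(\cdot,\cdot)$. The residual global scalar, if one wants it explicitly, can be pinned down by evaluating both sides at $\lambda=\varnothing$ (e.g.\ $V=\langle\evec{0},\dots,\evec{N-1}\rangle$); the remaining ingredients --- the reindexing in part \ref{duality_translation}, the classical Hodge-duality sign, and the boundary-path bijection --- are all routine.
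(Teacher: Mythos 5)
The paper does not prove this lemma; it is cited wholesale from \cite[Proposition 5.17]{karp_purbhoo} and used as a black box. Your argument supplies a correct, self-contained proof by a route that is the natural one: you encode $\C[u]_{\le M-1}$ as $\C^M$ via Taylor coefficients, write $(f,g)$ in terms of the standard form $\langle\cdot,\cdot\rangle$ and the involution $\rho\colon\evec i\mapsto(-1)^i\evec{M-1-i}$, and then reduce both parts to standard linear algebra on $\Gr(N,\C^M)$. For part (ii), recognizing $\tau_t=e^{t\partial_u}$ and verifying that $\partial_u$ is skew for $(\cdot,\cdot)$ (the reindexing works exactly as you say once $f_M=g_M=0$) shows $\tau_t$ is an isometry, which immediately gives $\conjugate{\tau_tV}=\tau_t\conjugate V$; this is clean. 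For part (i), the boundary-path identity $\overline{R_N(\lambda)^c}=R_{M-N}(\conjugate\lambda)$ is correct with the $0$-indexed conventions in force here, and you correctly isolate the crux, namely that the two $\lambda$-dependent exponents in the sign sum to $\sum_{k=0}^{M-1}k=\binom M2$, which is precisely why the alternating signs $(-1)^i$ are built into the pairing. Two small remarks, neither affecting correctness: a direct computation gives $\langle\rho f,g\rangle=(f,g)$ with no extra sign (the factor $(-1)^{M-1}$ shows up if $\rho$ acts on the second slot instead), but since only the vanishing locus of the pairing matters this is immaterial; and the Hodge-duality identity $p_S(V)=\pm\sgn(S,S^c)\,p_{S^c}(V^\perp)$ is sometimes stated assuming $V\oplus V^\perp=\C^M$, which can fail for isotropic $V$ over $\C$ — but the identity is polynomial in a spanning matrix of $V$ and holds on the dense open non-isotropic locus, so it holds everywhere (and one can also check directly that $*\,\omega_V$ remains a nonzero multiple of $\omega_{V^\perp}$ even when $V\cap V^\perp\neq 0$). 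If you want the global scalar explicitly, evaluating at $\lambda=\varnothing$ as you suggest pins it down.
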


\begin{proof}[Proof of \eqref{equation_JT_2}]
By translation, it suffices to prove \eqref{equation_JT_2} at $u=0$. Given a power series $f(u)$ about $u=0$, let $\pi_k(f(u))\in\C[u]_{\le k}$ denote the polynomial obtained by truncating the power series at degree $k$. We expand both sides of \eqref{equation_JT_2} as power series about $u=0$; it then suffices to show that equality holds after applying $\pi_k$ to both sides, for all $k\ge 0$. We claim that for every fixed $k$, the result of applying $\pi_k$ to each side only depends on $\pi_c(V)$ for some $c$ (depending on $k$). This implies that it suffices to prove \eqref{equation_JT_2} when $V$ is a space of polynomials, whence it follows from \eqref{equation_JT_1} by duality, using \cref{duality}.

To justify the claim, fix a basis $(f_1, \dots, f_N)$ of $V$. By \eqref{plucker_translation}, we can write the given scaling of the vector of Pl\"{u}cker coordinates of $V(u)$ as
$$
\Delta_\lambda(V(u)) = \phi(u)\cdot\det\hspace*{-1pt}\big(f_j^{(\lambda_{N-i+1}+i-1)}(u))_{1 \le i,j \le N}
$$
for all partitions $\lambda$ with $\ell(\lambda) \le N$, where $\phi(u)$ is some meromorphic function. We see that $\pi_l(\Delta_\lambda(V(u)))$ only depends on $\pi_{l+\lambda_1+N-1}(V)$, and the claim follows.
\end{proof}

\section{Universal Pl\"{u}cker coordinates}\label{sec_universal}

\noindent In this section, we prove that the higher Gaudin Hamiltonians $T_\lambda(t)$ are universal Pl\"{u}cker coordinates for spaces of quasi-exponentials about $u=t$:
\begin{thm}\label{quasiexp_coordinates}
Let $h_1, \dots, h_N\in \C$ and $z_1, \dots, z_n\in \C$, and fix $t\in\C$. Let $E\subseteq(\C^N)^{\otimes n}$ be an eigenspace of $\betheh(z_1, \dots, z_n)$, and let $V_E\in\Gr(N,\entire)$ be the space of quasi-exponentials from \cref{MTV_operator}\ref{MTV_operator_forward}. For every partition $\lambda$, let $T_{\lambda,E}(t)$ be the eigenvalue of $T_\lambda(t)$ acting on $E$. Then $(T_{\lambda,E}(t))_{\lambda}$ is the vector of Pl\"{u}cker coordinates of $V_E$ about $u=t$, i.e.,
$$
(T_{\lambda,E}(t))_{\lambda} = (\Delta_\lambda(V_E(t)))_\lambda \quad \text{ up to rescaling}.
$$
\end{thm}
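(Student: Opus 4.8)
The plan is to assemble the statement from ingredients already in place. Fix $t\in\C$ and the eigenspace $E$, and set $P(u):=\prod_{k=1}^n(u+z_k)=T_\varnothing(u)$. By \cref{MTV_operator}\ref{MTV_operator_forward}, $\Wr(V_E)$ equals $e^{(h_1+\cdots+h_N)u}P(u)$ up to a nonzero constant, so in particular it is a nowhere-vanishing entire function; dividing the standard Pl\"ucker vector of $V_E(u)$ by $\Wr(V_E)(u)$ gives, via \cref{change_of_scaling}, a scaling by meromorphic functions with $\Delta_\varnothing(V_E(u))\equiv 1$. I work with this scaling throughout and restore the standard one only at the end. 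The point of the choice is that it makes the dual Jacobi--Trudi identity \eqref{equation_JT_2} for $V_E$ literally coincide in form with the determinant identity \eqref{equation_ALTZ_determinant} for the operators $T_\lambda(u)$.

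For single-column partitions: by \cref{MTV_operator}\ref{MTV_operator_forward}, $\DD_{V_E}=\du uN+\sum_{i=1}^N B_{i,E}(u)\du u{N-i}$, where $B_{i,E}(u)$ is the eigenvalue of $B_i(u)$ on $E$; comparing with \cref{D_kernel} and using $\Delta_\varnothing(V_E(u))\equiv 1$ gives $\Delta_{(1^i)}(V_E(u))=(-1)^iB_{i,E}(u)$ for $1\le i\le N$. On the other hand \cref{equality_columns} gives $B_i(u)=(-1)^iT_{(1^i)}(u)/P(u)$ in $\End(\C^N(-z_1)\otimes\cdots\otimes\C^N(-z_n))$, and since $T_{(1^i)}(u)$ has coefficients in $\betheh(z_1,\dots,z_n)$ by \cref{Tlambda_generates}, it acts on $E$ as a scalar polynomial $T_{(1^i),E}(u)$; thus $B_{i,E}(u)=(-1)^iT_{(1^i),E}(u)/P(u)$. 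Combining, $\Delta_{(1^i)}(V_E(u))=T_{(1^i),E}(u)/P(u)$ for every $i$ (both sides equal $1$ for $i=0$ as $T_\varnothing(u)=P(u)$; both vanish for $i<0$ by convention and for $i>N$ since $\ell((1^i))>N$).

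Finally, bootstrap to general $\lambda$: apply \eqref{equation_JT_2} to $V_E$ with $m=\lambda_1$ (for our scaling the prefactor $\Delta_\varnothing(V_E(u))^{1-m}$ is $1$) and substitute the single-column formula into each matrix entry; the resulting determinant is exactly the right-hand side of \eqref{equation_ALTZ_determinant} with each $T_{(1^s)}(u)$ replaced by its eigenvalue on $E$. Since \eqref{equation_ALTZ_determinant} is an identity in the commutative algebra generated by the $T_{(1^s)}(u)$ and their $u$-derivatives (all of which are scalar on $E$), restricting it to $E$ shows that determinant equals $T_{\lambda,E}(u)/P(u)$, whence $\Delta_\lambda(V_E(u))=T_{\lambda,E}(u)/P(u)$ for all $\lambda$. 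Multiplying through by the $\lambda$-independent function $\Wr(V_E)(u)$ restores the standard scaling, so $\Delta_\lambda(V_E(u))$ is a fixed nonzero multiple of $e^{(h_1+\cdots+h_N)u}T_{\lambda,E}(u)$ for all $\lambda$, as entire functions of $u$; evaluating at $u=t$ and discarding the common nonzero scalar yields $(\Delta_\lambda(V_E(t)))_\lambda=(T_{\lambda,E}(t))_\lambda$ up to rescaling, which is the claim. (The general $t$ needs no separate argument; alternatively one may reduce to $t=0$ via $z_k\mapsto z_k+t$.) The conceptual weight sits in the single-column step, where the two descriptions of the universal object---the differential operator $\dop^\betheh$ of \cite{mukhin_tarasov_varchenko06,mukhin_tarasov_varchenko08} and the higher Gaudin Hamiltonians of \cite{alexandrov_leurent_tsuboi_zabrodin14}---get matched, via \cref{D_kernel,equality_columns}; the remaining difficulty is purely bookkeeping, namely aligning the single-column conventions ($s<0$, $s=0$, $s>N$) so that the substitution into \eqref{equation_JT_2} is legitimate. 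No genericity of $z_1,\dots,z_n$ (\cref{bethe_properties}) is needed here.
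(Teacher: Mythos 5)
Your argument is correct and runs along essentially the same lines as the paper's own (terse) proof: both match the single-column Pl\"ucker coordinates of $V_E$ with the eigenvalues of $B_i(u)$ via \cref{D_kernel}, identify $B_i(u)$ with $(-1)^iT_{(1^i)}(u)/P(u)$ via \cref{equality_columns}, and then bootstrap to general $\lambda$ by comparing the dual Jacobi--Trudi identity \eqref{equation_JT_2} for $V_E$ with the determinant identity \eqref{equation_ALTZ_determinant} for the $T_\lambda(u)$'s, using \cref{Tlambda_generates} to know that $E$ really is a common eigenspace. The only genuine difference is stylistic bookkeeping: the paper reduces to generic $t$ and appeals implicitly to analyticity in $t$, whereas you fix the normalization $\Delta_\varnothing(V_E(u))\equiv 1$ (dividing by $\Wr(V_E)$) and obtain an identity of meromorphic functions of $u$ outright, restoring the standard scaling at the end. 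Both are valid; yours makes the analyticity step explicit instead of leaving it to the reader.

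One small inaccuracy to correct: $\Wr(V_E) = c\,e^{(h_1+\cdots+h_N)u}\prod_{k}(u+z_k)$ is \emph{not} nowhere-vanishing when $n\ge 1$ --- it vanishes at each $u=-z_k$. Fortunately, what your argument actually needs is only that dividing by $\Wr(V_E)$ produces a \emph{meromorphic} rescaling (which is exactly the hypothesis of \cref{JT}), and you say so correctly in the same sentence; the ``nowhere-vanishing'' remark is a harmless slip and should simply be deleted. Everything else, including the observation that \cref{bethe_properties} (genericity of $z_1,\dots,z_n$) is not needed, agrees with the paper.
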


\begin{proof}
By \cref{Tlambda_generates}, $E$ is indeed a common eigenspace of the operators $T_\lambda(t)$.  It suffices to show that $(T_{\lambda,E}(t))_\lambda = (\Delta_\lambda(V_E(t)))_\lambda$ up to rescaling when $t\in\C$ is generic, in which case it follows from \cref{D_kernel}, \eqref{equation_JT_2}, \eqref{equation_equality_columns}, and \eqref{equation_ALTZ_determinant}.
\end{proof}

The following result implies that the eigenspaces $E$ appearing in \cref{quasiexp_coordinates} are precisely the common eigenspaces of the operators $T_\lambda(t)$, for any fixed $t\in\C$:
\begin{prop}\label{Tlambda_generates_strong}
Fix $t\in\C$. For all $z_1, \dots, z_n\in\C$, the algebra $\betheh(z_1, \dots, z_n)$ is generated by $\{T_\lambda(t) : \lambda \text{ is a partition}\}$.
\end{prop}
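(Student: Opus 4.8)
Write $\mathcal{A}$ for the subalgebra of $\End((\C^N)^{\otimes n})$ generated by $\{T_\lambda(t):\lambda\text{ a partition}\}$. By \cref{Tlambda_generates} we have $\mathcal{A}\subseteq\betheh(z_1,\dots,z_n)$, so the task is the reverse inclusion. Since $\betheh(z_1,\dots,z_n)$ is by definition generated by the coefficients $B_{i,j}$ of $B_1(u),\dots,B_N(u)$, and since \cref{equality_columns} gives $(u+z_1)\cdots(u+z_n)B_i(u)=(-1)^iT_{(1^i)}(u)$, which is a polynomial in $u$ of degree at most $n$, a routine comparison of coefficients (expanding $1/\big((u+z_1)\cdots(u+z_n)\big)$ as a series in $u^{-1}$) shows that the $\C$-span of $\{B_{i,j}:j\ge0\}$ equals the $\C$-span of the coefficients of $T_{(1^i)}(u)$, and hence the $\C$-span of $\{\du{u}{k}T_{(1^i)}(u)\eval{u=t}:k\ge0\}$. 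Therefore it is enough to show that $\du{u}{k}T_{(1^i)}(u)\eval{u=t}\in\mathcal{A}$ for all $1\le i\le N$ and $k\ge0$.

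I will prove the stronger statement that $\du{u}{k}T_\lambda(u)\eval{u=t}\in\mathcal{A}$ for every partition $\lambda$ and every $k\ge0$, by induction on $k$; the base case $k=0$ is the definition of $\mathcal{A}$. The inductive step rests on the polynomial identity
\[
\du{u}{1}T_\lambda(u)=\sum_{\mu}T_\mu(u)-(h_1+\cdots+h_N)\,T_\lambda(u),
\]
where the sum is over all partitions $\mu$ whose diagram is obtained from that of $\lambda$ by adding a single box (and $T_\mu(u):=0$ when $\ell(\mu)>N$). Granting this, differentiating $k-1$ times and setting $u=t$ expresses $\du{u}{k}T_\lambda(u)\eval{u=t}$ as a $\C$-linear combination of the elements $\du{u}{k-1}T_\mu(u)\eval{u=t}$ and $\du{u}{k-1}T_\lambda(u)\eval{u=t}$, all of which lie in $\mathcal{A}$ by the inductive hypothesis. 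This closes the induction, and together with the previous paragraph completes the proof.

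To prove the displayed identity I would work directly from $T_\lambda(u)=(u+z_1+\dd_1)\cdots(u+z_n+\dd_n)s_\lambda(h)$. Since the commuting factors $u+z_k+\dd_k$ are affine in $u$, we get $\du{u}{1}T_\lambda(u)=\sum_{m=1}^n\big(\prod_{k\ne m}(u+z_k+\dd_k)\big)s_\lambda(h)$. Separately, each $\dd_k$ is a derivation on scalar functions of $h$ with $\dd_k(\Tr(h))=I$ (see \cref{eg_Tlambda}), hence $\dd_k\dd_l(\Tr(h))=0$ for $k\ne l$; expanding $\prod_{k=1}^n(u+z_k+\dd_k)$ over subsets of $[n]$ and applying the Leibniz rule to the product $\Tr(h)\,s_\lambda(h)$ gives
\[
\Big(\prod_{k=1}^n(u+z_k+\dd_k)\Big)\!\big(\Tr(h)\,s_\lambda(h)\big)=\Tr(h)\,T_\lambda(u)+\du{u}{1}T_\lambda(u),
\]
the first summand coming from the terms in which no $\dd_k$ is applied to $\Tr(h)$, and the second from the terms in which exactly one $\dd_k$ is. On the other hand, the Pieri rule gives $\Tr(h)\,s_\lambda(h)=s_{(1)}(h)\,s_\lambda(h)=\sum_\mu s_\mu(h)$, with $\mu$ ranging over $\lambda$ plus a box and symmetric functions restricted to the $N$ eigenvalues of $h$; applying $\prod_k(u+z_k+\dd_k)$ to this equality and comparing with the previous display yields the identity, since $\Tr(h)=h_1+\cdots+h_N$.

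The main obstacle is this identity for $\du{u}{1}T_\lambda(u)$. The subtlety is that $\Tr(h)$ appears in two genuinely different roles — as the scalar function $h\mapsto\Tr(h)$ on which the matrix-derivative operators $\dd_k$ act via the Leibniz rule, and, after evaluation, as the number $h_1+\cdots+h_N$ — so the naive expectation that the identity should collapse to $0$ (because $\sum_{\mu=\lambda+\text{box}}s_\mu=s_{(1)}s_\lambda$) is incorrect. Everything else is the bookkeeping described above.
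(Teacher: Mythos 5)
Your proof is correct, and it takes a genuinely different and more self-contained route than the paper's.

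The paper reduces (as you do) to showing each $B_i(u)$ lies in the subalgebra $\mathcal A$ generated by $\{T_\lambda(t)\}$, but then passes through the Pl\"{u}cker-coordinate interpretation: it invokes \cref{bethe_properties} (semisimplicity of $\betheh(z_1,\dots,z_n)$ for generic $z$), \cref{MTV_operator}, and \cref{D_kernel} to treat $B_i(u)$ and $T_\mu(u)$ as Pl\"{u}cker coordinates $\Delta_{(1^i)}(V(u))$ and $\Delta_\mu(V(u))$, and then applies the translation identity \eqref{translation_identity} from \cite{karp_purbhoo} to expand $T_\mu(u)$ (up to the scalar $e^{(h_1+\cdots+h_N)u}$) as a series in the $T_\lambda(t)$'s. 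You instead prove the first-order recursion
\[
\du{u}{1}T_\lambda(u)=\sum_{\mu}T_\mu(u)-(h_1+\cdots+h_N)\,T_\lambda(u)
\]
directly from the definition \eqref{defn_Tlambda} via the Leibniz rule for the matrix derivatives $\dd_k$ together with the Pieri rule $s_{(1)}s_\lambda=\sum_\mu s_\mu$; iterating this recursion (plus the elementary observation that the coefficient span of the polynomial $T_{(1^i)}(u)$ coincides with both the span of the $B_{i,j}$'s and the span of the Taylor derivatives at $u=t$) closes the argument. In effect your displayed identity is a differentiated, descaled form of \eqref{translation_identity}, but your derivation of it is direct and algebraic. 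This buys two things: you never need the semisimplicity/genericity statement, and you do not need \cref{MTV_operator} or \cref{D_kernel} at all — only \cref{Tlambda_generates} (hence \cref{equality_columns} and \eqref{equation_ALTZ_determinant}) for the containment $\mathcal A\subseteq\betheh(z_1,\dots,z_n)$. The computations you sketch (that $\dd_k$ satisfies the Leibniz rule on products of a scalar-valued function with an operator-valued function in disjoint tensor factors, that $\dd_k(\Tr h)=I$, and that terms where two or more $\dd_k$ hit $\Tr h$ vanish) all check out, and the identity is verified by the examples $\lambda=\varnothing,(1)$ against \cref{eg_Tlambda}.
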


\begin{proof}
By translation, we may assume that $t=0$. By \cref{Tlambda_generates}, it suffices to show that each $B_i(u)$ is generated by the $T_\lambda$'s. By \cref{bethe_properties}, \cref{MTV_operator}, and \cref{D_kernel}, we can treat $B_i(u)$ (and hence also $T_{(1^i)}(u)$, by \eqref{equation_equality_columns}) as a Pl\"{u}cker coordinate $\Delta_{(1^i)}(V(u))$. Then by \eqref{equation_ALTZ_determinant} and \eqref{equation_JT_2}, we can treat $T_\mu(u)$ as a Pl\"{u}cker coordinate $\Delta_\mu(V(u))$. In particular, $T_\mu(u)$ is expressed in terms of the $T_\lambda$'s precisely as in \eqref{translation_identity}, up to a scalar (depending on $u$). Taking $\mu = (1^i)$ shows that $B_i(u)$ is generated by the $T_\lambda$'s.
\end{proof}

\section{Partial-trace formula}\label{sec_formula_proof}

\noindent In this section, we present a formula for the higher Gaudin Hamiltonians $T_\lambda(u)$ in terms of partial traces (see \cref{Tlambda_formula}). In order to prove it, we need several preliminary identities. Given a finite set $K$ and partition $\lambda\vdash |K|$, we define
\begin{align}\label{defn_alpha}
\alpha_\lambda^{(K)} := \sum_{\sigma\in\SG{K}}\chi^\lambda(\sigma)\sigma \in\C[\SG{K}].
\end{align}
Since the character $\chi^\lambda$ is constant on conjugacy classes, $\alpha_\lambda^{(K)}$ is in the center $Z(\C[\SG{K}])$.
\begin{lem}\label{identities}
Let $\lambda\vdash r$, let $K \subseteq L$ with $|L| = r$, let $\sigma\in\SG{L}$, and let $\gamma\in Z(\C[\SG{L}])$. Then
\begin{align}
&\label{identities_power} \Tr_L(h^{(L)}\sigma) = p_{\cyc(\sigma)}(h); \\[4pt]
&\label{identities_schur} \Tr_L(h^{(L)}\alpha_\lambda^{(L)}) = r!s_\lambda(h); \\[4pt]
&\label{identities_derivative} \dd_K\Tr_L(h^{(L)}\gamma) = (r)_{|K|}\Tr_{L\setminus K}(h^{(L\setminus K)}\gamma); \\[4pt]
&\label{identities_commute} \Tr_{L\setminus K}(h^{(L\setminus K)}\sigma) = \Tr_{L\setminus K}(\sigma h^{(L\setminus K)}); \\[4pt]
&\label{identities_factor} \Tr_{L\setminus K}(h^{(L)}\sigma) = h^{(K)}\Tr_{L\setminus K}(h^{(L\setminus K)}\sigma), \quad \Tr_{L\setminus K}(\sigma h^{(L)}) = \Tr_{L\setminus K}(h^{(L\setminus K)}\sigma)h^{(K)}.
\end{align}
The last three formulas are identities in $\operatorname{End}(\C^N)^{\otimes K}$.
\end{lem}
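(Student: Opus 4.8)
The plan is to establish the five identities one at a time. Identities \eqref{identities_power} and \eqref{identities_schur} reduce to classical facts. For \eqref{identities_power}, I would decompose $\sigma$ into disjoint cycles; under this decomposition $\sigma$ becomes a tensor product (over the cycles) of cyclic-shift operators on the corresponding blocks of tensor factors, $h^{(L)}$ becomes the matching tensor product of copies of $h$, and the full trace $\Tr_L$ factors as a product of traces over these blocks. For a single cycle of length $k$, the defining convention $\sigma(\otimes v_l) = \otimes v_{\sigma^{-1}(l)}$ gives directly that the trace of the cyclic shift composed with $h^{\otimes k}$ equals $\Tr(h^k) = p_k(h)$ by \eqref{power_sum_trace}; multiplying over cycles yields $p_{\cyc(\sigma)}(h)$. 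Then \eqref{identities_schur} follows by expanding $\alpha_\lambda^{(L)}$ linearly, applying \eqref{identities_power} termwise, and recognizing the result as $r!$ times the power-sum expansion \eqref{schur_expansion} of $s_\lambda$, after fixing a bijection $L \cong [r]$ (which preserves cycle types and character values).

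Identities \eqref{identities_commute} and \eqref{identities_factor} are short manipulations of partial traces. For \eqref{identities_commute}, I would apply \cref{trace_commute} with $V_1 = (\C^N)^{\otimes K}$ and $V_2 = (\C^N)^{\otimes(L\setminus K)}$, taking $A = \sigma$ and letting $B \in \End(V_2)$ be the operator for which $I_{V_1}\otimes B = h^{(L\setminus K)}$; then $\Tr_2 = \Tr_{L\setminus K}$ and the two sides of \cref{trace_commute} become the two sides of \eqref{identities_commute}. For \eqref{identities_factor}, I would write $h^{(L)} = h^{(K)}h^{(L\setminus K)}$ (these commute, acting on disjoint factors); since $h^{(K)}$ is the identity on the tensor factors traced out by $\Tr_{L\setminus K}$, it pulls out of the partial trace by \eqref{partial_trace_decomposition} --- on the left for the first equation, and on the right for the second, where one also invokes \eqref{identities_commute}.

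The heart of the lemma is \eqref{identities_derivative}, which I would prove by first handling $|K| = 1$ and then iterating. The one-step claim is: for $k \in L' \subseteq L$ and $\gamma \in Z(\C[\SG{L}])$, one has $\dd_k\Tr_{L'}(h^{(L')}\gamma) = |L'|\cdot\Tr_{L'\setminus\{k\}}(h^{(L'\setminus\{k\})}\gamma)$. To see this, expand the matrix derivative from the definition of $\dd_k$ and differentiate $h^{(L')} = \prod_{l\in L'}h^{(l)}$ by the Leibniz rule; this produces a sum over $l \in L'$ of terms $\sum_{i,j}\Tr_{L'}(h^{(L'\setminus\{l\})}E_{j,i}^{(l)}\gamma)\hspace*{1pt}E_{i,j}^{(k)}$. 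The crucial observation is that each of these $|L'|$ terms is the same: conjugating inside the trace by the transposition $(l\,l')\in\SG{L'}\subseteq\SG{L}$ swaps the $l$-th and $l'$-th tensor factors, fixes $\gamma$ since $\gamma$ is central in $\C[\SG{L}]$, and leaves $\Tr_{L'}$ invariant because it merely permutes traced-out factors among themselves. Choosing $l = k$ as representative, I then write $\Tr_{L'} = \Tr_k\Tr_{L'\setminus\{k\}}$, pull $E_{j,i}^{(k)}$ out of the inner partial trace (it acts trivially on the factors being traced), and use the standard reconstruction $\sum_{i,j}\Tr_k(E_{j,i}^{(k)}B)\hspace*{1pt}E_{i,j}^{(k)} = B$ with $B := \Tr_{L'\setminus\{k\}}(h^{(L'\setminus\{k\})}\gamma)$. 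The one-step claim applies verbatim with $L'$ replaced by any subset $L'\setminus K''$, since the only property of $\gamma$ it uses is centrality in $\C[\SG{L}]$; iterating over the elements of $K$ peels off the factors one at a time and multiplies the prefactors $|L|,\, |L|-1,\, \dots,\, |L|-|K|+1$, giving $(r)_{|K|}$.

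I expect \eqref{identities_derivative} to be the only real obstacle, and within it two points need care: the bookkeeping of tensor-factor labels (because $K \subseteq L$, the factors on which $\dd_K$ lands are exactly those freed up by $\Tr$, so the right-hand side $\Tr_{L\setminus K}(h^{(L\setminus K)}\gamma)$ must be read with $\gamma$ still acting on all of $(\C^N)^{\otimes L}$), and the conjugation-invariance step that collapses the $|L'|$ Leibniz summands into one. The remaining four identities require only routine verification along the lines sketched above.
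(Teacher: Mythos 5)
Your proof is correct, and the key idea driving the hardest part — that centrality of $\gamma$ in $\C[\SG{L}]$ collapses all the Leibniz summands of $\dd_K\Tr_L(h^{(L)}\gamma)$ to a single representative, which then gets multiplied by a count of terms — is exactly the observation the paper uses. Where you diverge is in how the individual identities are finished off. The paper proves \eqref{identities_power}, the representative term of \eqref{identities_derivative}, and \eqref{identities_factor} all by the same device: a single coordinate identity \eqref{useful_identity} expressing $h^{(S)}\sigma$ in terms of matrix units, after which every trace is computed explicitly over basis indices. You instead argue structurally: \eqref{identities_power} by factoring along the disjoint cycles of $\sigma$ and invoking $\Tr(\text{cyclic shift}\circ h^{\otimes k})=\Tr(h^k)$; \eqref{identities_factor} by pulling $h^{(K)}$ through the partial trace via the standard fact that operators supported on untraced factors factor out of $\Tr_{L\setminus K}$; and \eqref{identities_derivative} by peeling off one $\dd_k$ at a time and using the reconstruction identity $\sum_{i,j}\Tr_k\big(E^{(k)}_{j,i}B\big)E^{(k)}_{i,j}=B$ in place of a coordinate check. (The paper instead expands all of $\dd_K$ at once, obtaining a sum over injections $[|K|]\hookrightarrow[r]$, and collapses it in one step.) Your iteration is sound because the one-step claim only uses that $\gamma$ is central in $\C[\SG{L}]$, which continues to hold verbatim as $L'$ shrinks; the one subtlety you correctly anticipate is that $E^{(k)}_{j,i}$ must be commuted past $h^{(L'\setminus\{k\})}$ and then pulled out of $\Tr_{L'\setminus\{k\}}$ before applying the reconstruction. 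Overall your version is more conceptual and modular, while the paper's is more uniform and index-heavy; both are complete proofs.
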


\begin{proof}
We may assume that $K = [k]$, $L = [r]$, and $h = \Diag(h_1, \dots, h_N)$ is diagonal. We will use the following identity several times:
\begin{align}\label{useful_identity}
h^{(S)}\sigma = \sum_{1 \le i_1, \dots, i_r \le N}\Big(\prod_{l \in S} h_{i_{\sigma^{-1}(l)}}\Big)\Big(\prod_{l=1}^r E^{(l)}_{i_{\sigma^{-1}(l)}, i_l}\Big) \quad \text{ for all } S\subseteq [r].
\end{align}
It can be verified by evaluating each side on the elementary tensor $\evec{i_1} \otimes \cdots \otimes \evec{i_r}$, for all $1 \le i_1, \dots, i_r \le N$. We now individually establish the five identities \eqref{identities_power}--\eqref{identities_factor}:

\eqref{identities_power}:
It suffices to prove the result when $\sigma$ is an $r$-cycle. By \eqref{useful_identity} with $S=[r]$, we get
$$
\Tr(h^{([r])}\sigma) = \sum_{1 \le i_1, \dots, i_r \le N}(h_{i_1}\cdots h_{i_r})\hspace*{2pt}\Big(\prod_{l=1}^r \Tr(E_{i_{\sigma^{-1}(l)}, i_l})\Big) = \sum_{i=1}^Nh_i^r = p_r(h),
$$
as desired.

\eqref{identities_schur}:
This follows from \eqref{identities_power} and \eqref{schur_expansion}.

\eqref{identities_derivative}:
The left-hand side $\dd_{[k]}\Tr(h^{([r])}\gamma)$ equals
\begin{align*}
& \scalebox{0.94}{$\displaystyle\sum_{1 \le a_1, b_1, \dots, a_k, b_k \le N}\partial_{\varepsilon_k} \cdots \partial_{\varepsilon_1}\eval{\varepsilon_1 = \cdots = \varepsilon_k =0}\Tr\big((h + \varepsilon_1E_{b_1,a_1} + \cdots + \varepsilon_k E_{b_k,a_k}\big)^{([r])}\gamma) \cdot E^{(1)}_{a_1,b_1}\cdots E^{(k)}_{a_k,b_k}$} \\[4pt]
=& \scalebox{0.94}{$\displaystyle\sum_{1 \le a_1, b_1, \dots, a_k, b_k \le N}\hspace*{4pt}\sum_{\pi : [k] \hookrightarrow [r] \text{ injective}}\Tr(h^{([r]\setminus\pi([k]))} E^{(\pi(1))}_{b_1,a_1} \cdots E^{(\pi(k))}_{b_k,a_k}\gamma) \cdot E^{(1)}_{a_1,b_1}\cdots E^{(k)}_{a_k,b_k}$}.
\end{align*}
Since $\gamma\in Z(\C[\SG{r}])$, the last summand above does not depend on $\pi$. Hence we may take $\pi$ to be the identity on $[k]$, and multiply by the number of injections $[k]\hookrightarrow [r]$:
$$
\dd_{[k]}\Tr(h^{([r])}\gamma) = (r)_k\cdot\sum_{1 \le a_1, b_1, \dots, a_k, b_k \le N}\Tr(h^{([r]\setminus [k])} E^{(1)}_{b_1,a_1} \cdots E^{(k)}_{b_k,a_k}\gamma) \cdot E^{(1)}_{a_1,b_1}\cdots E^{(k)}_{a_k,b_k}.
$$
Therefore it suffices to prove that for all $\sigma\in\SG{r}$, we have
\begin{align}\label{new_identities_derivative}
\sum_{1 \le a_1, b_1, \dots, a_k, b_k \le N}\Tr(h^{([r]\setminus [k])} E^{(1)}_{b_1,a_1} \cdots E^{(k)}_{b_k,a_k}\sigma) \cdot E^{(1)}_{a_1,b_1}\cdots E^{(k)}_{a_k,b_k} = \Tr_{[r]\setminus [k]}(h^{([r]\setminus [k])}\sigma).
\end{align}
By \eqref{useful_identity} with $S = [r]\setminus [k]$ and \eqref{equation_partial_trace_general}, the right-hand side of \eqref{new_identities_derivative} equals
\begin{gather}\label{derivative_to_prove}
\begin{aligned}
\Tr_{[r]\setminus [k]}(h^{([r]\setminus [k])}\sigma) &= \sum_{1 \le i_1, \dots, i_r \le N}\Big(\prod_{l = k+1}^r h_{i_{\sigma^{-1}(l)}}\Big)\Big(\prod_{l=k+1}^r \Tr(E_{i_{\sigma^{-1}(l)}, i_l})\Big)\Big(\prod_{l=1}^k E^{(l)}_{i_{\sigma^{-1}(l)}, i_l}\Big) \\
&= \sum_{\substack{1 \le i_1, \dots, i_r \le N,\\[2pt] i_{\sigma^{-1}(l)} = i_l \text{ for all } l\in [r]\setminus[k]}}\Big(\prod_{l = k+1}^r h_{i_{\sigma^{-1}(l)}}\Big) \Big(\prod_{l=1}^k E^{(l)}_{i_{\sigma^{-1}(l)}, i_l}\Big).
\end{aligned}
\end{gather}
For the left-hand side of \eqref{new_identities_derivative}, we calculate that $h^{([r]\setminus [k])} E^{(1)}_{b_1,a_1} \cdots E^{(k)}_{b_k,a_k}\sigma(\evec{i_1} \otimes \cdots \otimes \evec{i_r})$ (for $1 \le i_1, \dots, i_r \le N)$ equals
$$
\Big(\prod_{l = k+1}^r h_{i_{\sigma^{-1}(l)}}\Big)\hspace*{2pt}\evec{b_1} \otimes \cdots \otimes \evec{b_k} \otimes \evec{i_{\sigma^{-1}(k+1)}} \otimes \cdots \otimes \evec{i_{\sigma^{-1}(r)}}
$$
if $a_l = i_{\sigma^{-1}(l)}$ for all $l\in [k]$, and zero otherwise. Moreover, the elementary tensor above equals $\evec{i_1} \otimes \cdots \otimes \evec{i_r}$ if and only if
$$
b_l = i_l \text{ for all } l\in [k] \quad \text{ and } \quad i_{\sigma^{-1}(l)} = i_l \text{ for all } l\in [r]\setminus[k].
$$
Hence the left-hand side of \eqref{new_identities_derivative} equals the right-hand side of \eqref{derivative_to_prove}, as desired.

\eqref{identities_commute}:
This follows from \cref{trace_commute}.

\eqref{identities_factor}:
The two equalities are symmetric; we prove the first one. By \eqref{useful_identity} with $S=[r]$, we get
$$
\Tr_{[r]\setminus [k]}(h^{([r])}\sigma) = \sum_{1 \le i_1, \dots, i_r \le N}\Big(\prod_{l = k+1}^r h_{i_{\sigma^{-1}(l)}}\Big)\Big(\prod_{l=k+1}^r \Tr(E_{i_{\sigma^{-1}(l)}, i_l})\Big)\Big(\prod_{l=1}^k h_{i_{\sigma^{-1}(l)}}E^{(l)}_{i_{\sigma^{-1}(l)}, i_l}\Big).
$$
Since $h_jE^{(l)}_{j,i} = h^{(l)}E^{(l)}_{j,i}$, we see from \eqref{derivative_to_prove} that $\Tr_{[r]\setminus [k]}(h^{([r])}\sigma) = h^{([k])}\Tr_{[r]\setminus [k]}(h^{([r]\setminus [k])}\sigma)$.
\end{proof}

We now establish the partial-trace formula for $T_\lambda(u)$:
\begin{thm}\label{Tlambda_formula}
Let $\lambda$ be a partition and $m \ge \max\{n,|\lambda|\}$. Then we have
\begin{align}\label{Tlambda_equation}
T_\lambda(u) = \sum_{\substack{K\subseteq [n],\hspace*{2pt} L\subseteq [m],\\ K\subseteq L,\hspace*{2pt} |L| = |\lambda|}}\frac{(m-|\lambda|)!}{(m-|K|)!}\,\Big(\prod_{l \in [n]\setminus K}(u+z_l)\Big)\,\Tr_{L\setminus K}(h^{(L\setminus K)}\alpha_\lambda^{(L)})
\end{align}
as elements of $\End((\C^N)^{\otimes n})$.
\end{thm}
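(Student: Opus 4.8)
The plan is to reduce \eqref{Tlambda_equation} to a computation of the matrix derivatives $\dd_K s_\lambda(h)$. By \eqref{defn_Tlambda_expanded}, and using that $T_\lambda(u)$ is obtained from $T_\lambda$ by replacing each $z_k$ with $z_k + u$, one has
\begin{align*}
T_\lambda(u) = \sum_{K \subseteq [n]} \Big(\prod_{l \in [n]\setminus K}(u + z_l)\Big)\, \dd_K s_\lambda(h).
\end{align*}
Comparing with the right-hand side of \eqref{Tlambda_equation}, it therefore suffices to prove that for every $K \subseteq [n]$,
\begin{align*}
\dd_K s_\lambda(h) = \sum_{\substack{L \subseteq [m],\ K \subseteq L,\ |L| = |\lambda|}} \frac{(m - |\lambda|)!}{(m - |K|)!}\, \Tr_{L\setminus K}\big(h^{(L\setminus K)}\alpha_\lambda^{(L)}\big)
\end{align*}
inside $\End\big((\C^N)^{\otimes K}\big) \subseteq \End\big((\C^N)^{\otimes n}\big)$.

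Set $r := |\lambda|$. First I would treat the case $|K| \le r$. Pick any $L$ with $K \subseteq L \subseteq [m]$ and $|L| = r$; such $L$ exists because $m \ge \max\{n, r\}$. Since $\alpha_\lambda^{(L)}$ lies in the center $Z(\C[\SG{L}])$, applying \eqref{identities_derivative} with $\gamma = \alpha_\lambda^{(L)}$ and then \eqref{identities_schur} gives
\begin{align*}
(r)_{|K|}\, \Tr_{L\setminus K}\big(h^{(L\setminus K)}\alpha_\lambda^{(L)}\big) = \dd_K \Tr_L\big(h^{(L)}\alpha_\lambda^{(L)}\big) = \dd_K\big(r!\, s_\lambda(h)\big),
\end{align*}
so, dividing by $r!$ and using $(r)_{|K|} = r!/(r-|K|)!$,
\begin{align*}
\dd_K s_\lambda(h) = \frac{1}{(r - |K|)!}\, \Tr_{L\setminus K}\big(h^{(L\setminus K)}\alpha_\lambda^{(L)}\big).
\end{align*}
The key observation is that the middle expression above, hence the right-hand side, does not depend on the choice of $L$ (by \eqref{identities_schur} it equals $\dd_K(r!\, s_\lambda(h))$, which involves neither $L$ nor the set $L\setminus K$), so I may denote by $\Theta_K$ the common value of $\Tr_{L\setminus K}(h^{(L\setminus K)}\alpha_\lambda^{(L)})$ over all admissible $L$.

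Finally I would carry out the counting. The sets $L$ occurring in the sum are in bijection with the $(r - |K|)$-element subsets of $[m]\setminus K$, so there are $\binom{m-|K|}{r-|K|}$ of them, and each summand equals $\frac{(m-r)!}{(m-|K|)!}\Theta_K$. Hence the sum equals
\begin{align*}
\binom{m-|K|}{r-|K|}\frac{(m-r)!}{(m-|K|)!}\,\Theta_K = \frac{1}{(r-|K|)!}\,\Theta_K = \dd_K s_\lambda(h),
\end{align*}
which is the required identity. The remaining case $|K| > r$ is immediate: the sum is empty since no $L$ can satisfy $K \subseteq L$ and $|L| = r < |K|$, while $\dd_K s_\lambda(h) = 0$ because $s_\lambda(h)$ is a polynomial of degree $r$ in the entries of $h$ and $\dd_K$ differentiates it $|K| > r$ times. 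Substituting back into the first display above yields \eqref{Tlambda_equation}.

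Given \cref{identities}, no step presents a genuine obstacle: the content is the bookkeeping with the nested index sets $K \subseteq L$ and the resulting binomial identity, together with the observation that $\Tr_{L\setminus K}(h^{(L\setminus K)}\alpha_\lambda^{(L)})$ is independent of $L$. The one ingredient that does real work, the matrix-derivative identity \eqref{identities_derivative}, is already in hand.
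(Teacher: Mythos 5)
Your proof is correct and takes essentially the same route as the paper: reduce to computing $\dd_K s_\lambda(h)$ via \eqref{defn_Tlambda_expanded}, then apply \eqref{identities_schur} and \eqref{identities_derivative} together with the count $\binom{m-|K|}{r-|K|}$ of admissible $L$. The only organizational difference is that the paper first averages $\Tr_L(h^{(L)}\alpha_\lambda^{(L)})$ over $L$ to express $s_\lambda(h)$ and then differentiates, whereas you fix one $L$, differentiate, observe the result is $L$-independent, and sum at the end; you also spell out the trivial case $|K|>|\lambda|$, which the paper passes over silently.
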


We point out that \eqref{Tlambda_equation} involves the operator $h^{(L\setminus K)}\alpha_\lambda^{(L)}$ acting on $(\C^N)^{\otimes L}$, and $(\C^N)^{\otimes L}$ may contain some extra tensor factors not present in $(\C^N)^{\otimes n}$. However, applying the partial trace $\Tr_{L\setminus K}$ removes any dependence on the extra tensor factors, and so both sides of \eqref{Tlambda_equation} act on $(\C^N)^{\otimes n}$. Also, \eqref{Tlambda_equation} becomes cleaner if we set $m := \max\{n,|\lambda|\}$. We state \eqref{Tlambda_equation} for general $m$ so as to unify the cases $|\lambda| \le n$ and $|\lambda| \ge n$.

\begin{proof}
It suffices to prove \eqref{Tlambda_equation} at $u=0$. Let $r := |\lambda|$. By \eqref{identities_schur}, we have
\begin{align}\label{schur_average}
s_\lambda(h) = \frac{1}{r!\binom{m-|K|}{r-|K|}}\sum_{\substack{K \subseteq L \subseteq [m],\\ |L| = r}}\Tr_L(h^{(L)}\alpha_\lambda^{(L)}) \quad \text{ for all } K\subseteq [n],
\end{align}
since there are $\binom{m-|K|}{r-|K|}$ terms in the sum. Then \eqref{Tlambda_equation} follows from \eqref{defn_Tlambda_expanded}, \eqref{schur_average}, and \eqref{identities_derivative}, since $\alpha_\lambda^{(L)}\in Z(\C[\SG{L}])$ and $\frac{(r)_{|K|}}{r!\binom{m-|K|}{r-|K|}} = \frac{(m-r)!}{(m-|K|)!}$.
\end{proof}

\begin{rem}
While we did not use \eqref{identities_commute} and \eqref{identities_factor} in the proof of \cref{Tlambda_formula}, they further illuminate the formula \eqref{Tlambda_equation}. Indeed, they imply that if $h_1, \dots, h_N \neq 0$, we have
\begin{align}\label{four_ways}
\scalebox{0.88}{$(h^{-1})^{(K)}\Tr_{L\setminus K}(h^{(L)}\alpha_\lambda^{(L)}) = \Tr_{L\setminus K}(h^{(L\setminus K)}\alpha_\lambda^{(L)}) = \Tr_{L\setminus K}(\alpha_\lambda^{(L)}h^{(L\setminus K)}) = \Tr_{L\setminus K}(\alpha_\lambda^{(L)}h^{(L)})(h^{-1})^{(K)}$},
\end{align}
which gives three additional ways to rewrite \eqref{Tlambda_equation}. We will also use \eqref{four_ways} in the proof of \cref{Tlambda_positive}.
\end{rem}

\section{Positive (semi)definiteness}\label{sec_semidefinite}

\noindent In this section we prove that the higher Gaudin Hamiltonians $T_\lambda(u)$ are positive (semi)definite for appropriate choices of the parameters (see \cref{Tlambda_positive}). We then use this to prove our main result \cref{positivity} (the positivity theorem for spaces of quasi-exponentials). We will need the following three basic results:
\begin{lem}\label{semidefinite_product}
Suppose that $A,B\in\End(V)$ commute and are positive semidefinite, where $V$ is finite-dimensional. Then $AB$ is positive semidefinite.
\end{lem}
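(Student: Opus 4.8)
The plan is to use the positive semidefinite square root. Recall that since $V$ is a finite-dimensional complex inner product space and $A$ is positive semidefinite, $A$ is self-adjoint and admits a positive semidefinite square root $A^{1/2}\in\End(V)$, which moreover can be written as a polynomial in $A$ (e.g.\ by Lagrange interpolation applied to the eigenvalues of $A$, sending each eigenvalue to its nonnegative square root). In particular, since $B$ commutes with $A$, it also commutes with $A^{1/2}$.

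Given this, I would compute, for any $v\in V$,
\[
\langle ABv,\hspace*{1pt}v\rangle = \langle A^{1/2}A^{1/2}Bv,\hspace*{1pt}v\rangle = \langle A^{1/2}BA^{1/2}v,\hspace*{1pt}v\rangle = \langle B(A^{1/2}v),\hspace*{1pt}A^{1/2}v\rangle \ge 0,
\]
where the second equality uses that $A^{1/2}$ commutes with $B$, the third uses that $A^{1/2}$ is self-adjoint, and the inequality uses that $B$ is positive semidefinite. Since $\langle ABv,v\rangle\ge 0$ for all $v\in V$ and we work over $\C$, this forces $AB$ to be self-adjoint, so $AB$ is positive semidefinite.

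The only point requiring care is that $A^{1/2}$ commutes with $B$, which is immediate once one knows $A^{1/2}$ is a polynomial in $A$; there is no substantial obstacle. Alternatively, one can avoid square roots entirely: $A$ and $B$ are commuting self-adjoint operators, hence simultaneously diagonalizable in an orthonormal basis, in which both are diagonal with nonnegative entries, so $AB$ is diagonal with nonnegative entries and therefore positive semidefinite.
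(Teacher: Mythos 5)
Your proof is correct. Your primary argument via the positive semidefinite square root $A^{1/2}$ (a polynomial in $A$, hence commuting with $B$) is a genuinely different route from the paper's, which simply says ``simultaneously diagonalize $A$ and $B$'' — but you also give exactly the paper's argument as your closing alternative. The square-root route has the advantage of being inner-product-flavored and making the nonnegativity of $\langle ABv,v\rangle$ manifest term by term; it also generalizes without change to the infinite-dimensional (bounded operator) setting, where simultaneous diagonalization is less immediate. The simultaneous-diagonalization route is shorter here and is what the paper uses; it gives the slightly stronger conclusion ``positive definite'' for free when both $A$ and $B$ are. One small point worth being explicit about in the square-root version: $\langle ABv,v\rangle\ge 0$ for all $v$ over $\C$ forces $AB$ self-adjoint via the polarization identity — you note this, which is good, since over $\R$ this step would fail.
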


\begin{proof}
This follows by simultaneously diagonalizing $A$ and $B$.
\end{proof}

\begin{lem}\label{semidefinite_partial_trace}
Suppose that $A\in\End(V_1\otimes\cdots\otimes V_n)$ is positive semidefinite, where $V_1, \dots, V_n$ are finite-dimensional. Then the partial trace $\Tr_K(A)$ is positive semidefinite for all $K\subseteq [n]$.
\end{lem}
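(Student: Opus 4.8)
The plan is to reduce the statement to the rank-one building block of a partial trace, namely the maps $X\mapsto (I\otimes v^*)\,X\,(I\otimes v)$, and to check that each such map sends positive semidefinite operators to positive semidefinite operators; then iterate. First I would reduce to the case $K=\{k\}$ is a single tensor factor: since $\Tr_K=\Tr_{k_1}\circ\cdots\circ\Tr_{k_r}$ for $K=\{k_1,\dots,k_r\}$ (using that disjoint partial traces compose, as noted after \eqref{equation_partial_trace_general}), it suffices to show that tracing out one factor preserves positive semidefiniteness. After relabeling, I may assume we are tracing out the last factor, so write $W:=V_1\otimes\cdots\otimes V_{n-1}$ and $V_n$, and use the formula \eqref{partial_trace_decomposition}: if $(v_1,\dots,v_d)$ is an orthonormal basis of $V_n$ (with respect to the inner product making $A$ self-adjoint) with dual basis $(v_1^*,\dots,v_d^*)$, then $\Tr_{\{n\}}(A)=\sum_{j=1}^d (I_W\otimes v_j^*)\,A\,(I_W\otimes v_j)$.

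Next I would observe that each summand is of the form $P_j\,A\,P_j^*$ where $P_j:=I_W\otimes v_j^* : W\otimes V_n\to W$ and $P_j^*=I_W\otimes v_j : W\to W\otimes V_n$ is its adjoint. For any $w\in W$ we then have $\langle P_j A P_j^* w, w\rangle = \langle A (P_j^* w), P_j^* w\rangle \ge 0$ since $A$ is positive semidefinite; hence each $P_jAP_j^*$ is positive semidefinite, and a sum of positive semidefinite operators is positive semidefinite. Therefore $\Tr_{\{n\}}(A)$ is positive semidefinite. Iterating over the elements of $K$ gives the claim for general $K$.

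The only mild subtlety — and the step I would be most careful about — is bookkeeping: making sure that the inner products are chosen compatibly so that "positive semidefinite" means the same thing at each stage, and that the adjoint of $I_W\otimes v_j^*$ is indeed $I_W\otimes v_j$ for the chosen inner products. This is routine once one fixes inner products on each $V_i$ and takes tensor-product inner products throughout, using an orthonormal basis of the factor being traced out. No genuine obstacle arises; the argument is elementary linear algebra, and the composition property of partial traces already recorded in the text does the rest of the work.

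\begin{proof}
Since $\Tr_K=\Tr_{k_1}\circ\cdots\circ\Tr_{k_r}$ for $K=\{k_1,\dots,k_r\}$ (using that partial traces over disjoint index sets compose, as noted after \eqref{equation_partial_trace_general}), it suffices to treat the case $K=\{n\}$; the general case follows by induction on $|K|$. Fix inner products on each $V_i$ and equip every tensor product with the corresponding tensor-product inner product; we may assume $A$ is positive semidefinite with respect to the induced inner product on $V_1\otimes\cdots\otimes V_n$. Write $W:=V_1\otimes\cdots\otimes V_{n-1}$, and let $(v_1,\dots,v_d)$ be an orthonormal basis of $V_n$ with dual basis $(v_1^*,\dots,v_d^*)$. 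By \eqref{partial_trace_decomposition},
$$
\Tr_{\{n\}}(A)=\sum_{j=1}^d (I_W\otimes v_j^*)\,A\,(I_W\otimes v_j).
$$
For each $j$, set $P_j:=I_W\otimes v_j: W\to W\otimes V_n$; with the chosen inner products, the adjoint of $P_j$ is $P_j^*=I_W\otimes v_j^*$, so the $j$th summand is $P_j^*AP_j$. For any $w\in W$,
$$
\langle P_j^*AP_j\, w,\, w\rangle=\langle A(P_jw),\,P_jw\rangle\ge 0,
$$
since $A$ is positive semidefinite. Hence each $P_j^*AP_j$ is positive semidefinite, and therefore so is their sum $\Tr_{\{n\}}(A)$.
\end{proof}
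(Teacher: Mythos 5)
Your proof is correct and follows the same route as the paper: reduce to tracing out a single factor and apply the decomposition \eqref{partial_trace_decomposition} to write $\Tr_{\{n\}}(A)$ as a sum of operators of the form $P_j^*AP_j$, each of which is positive semidefinite. The paper's proof is just a terser version of this; your extra care about orthonormal bases and adjoints makes the argument fully explicit but adds nothing essentially new.
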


\begin{proof}
We may assume $n=2$ and $K = \{2\}$, whence the result follows from \eqref{partial_trace_decomposition}.
\end{proof}

Let $X$ be a finite-dimensional inner product space which is also a $\C[\SG{K}]$-module. We call $X$ \defn{unitary} if each $\sigma\in\SG{K}$ acts on $X$ as a unitary operator (cf.\ \cite[Section 1.3]{serre98}). Observe that $(\C^N)^{\otimes K}$ with the standard inner product is a unitary $\C[\SG{K}]$-module.
\begin{lem}{\cite[Theorem 8]{serre98}}\label{alpha_semidefinite}
Let $\alpha_\lambda^{(K)}\in\C[\SG{K}]$ be as in \eqref{defn_alpha}, and let $X$ be a finite-dimensional unitary $\C[\SG{K}]$-module. Then $\frac{\ff{\lambda}}{|\lambda|!}\alpha_\lambda^{(K)}$ acts on $X$ as orthogonal projection onto its $\lambda$-isotypic component. In particular, $\alpha_\lambda^{(K)}\in\End(X)$ is positive semidefinite.\hfill\qed
\end{lem}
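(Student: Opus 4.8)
The plan is to recognize $\frac{\ff{\lambda}}{|\lambda|!}\,\alpha_\lambda^{(K)}$ as the central primitive idempotent of $\C[\SG{K}]$ attached to $\chi^\lambda$, and then to use unitarity of $X$ to see that the associated isotypic projection is orthogonal. Set $m := |K| = |\lambda|$ and $G := \SG{K}$. Recall the standard formula for the central primitive idempotent $e_\lambda \in Z(\C[G])$ associated to the irreducible character $\chi^\lambda$: $e_\lambda = \frac{\ff{\lambda}}{m!}\sum_{\sigma\in G}\overline{\chi^\lambda(\sigma)}\,\sigma$. Since symmetric group characters are integer-valued, $\overline{\chi^\lambda(\sigma)} = \chi^\lambda(\sigma)$, so $e_\lambda = \frac{\ff{\lambda}}{m!}\,\alpha_\lambda^{(K)}$. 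By Maschke's theorem and the Wedderburn decomposition of $\C[G]$, the element $e_\lambda$ acts on any finite-dimensional $\C[G]$-module $X$ as the linear projection onto its $\lambda$-isotypic component $X_\lambda$; concretely, on each irreducible summand of $X$ with character $\chi^\mu$, the central element $\alpha_\lambda^{(K)}$ acts as the scalar $\frac{1}{\ff{\mu}}\sum_{\sigma\in G}\chi^\lambda(\sigma)\chi^\mu(\sigma) = \frac{m!}{\ff{\lambda}}\,\delta_{\lambda\mu}$, by the orthogonality relations for characters together with their reality. This gives the description of $\frac{\ff{\lambda}}{m!}\,\alpha_\lambda^{(K)}$ as the projection onto $X_\lambda$ as a linear map.

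Next I would upgrade ``linear projection'' to ``orthogonal projection'' using that $X$ is unitary. Since each $\sigma\in G$ acts on the inner product space $X$ by a unitary operator, its adjoint is $\sigma^* = \sigma^{-1}$. Hence the adjoint of $e_\lambda = \frac{\ff{\lambda}}{m!}\sum_{\sigma}\chi^\lambda(\sigma)\,\sigma$ is $e_\lambda^* = \frac{\ff{\lambda}}{m!}\sum_{\sigma}\overline{\chi^\lambda(\sigma)}\,\sigma^{-1} = \frac{\ff{\lambda}}{m!}\sum_{\tau}\overline{\chi^\lambda(\tau^{-1})}\,\tau = \frac{\ff{\lambda}}{m!}\sum_{\tau}\chi^\lambda(\tau)\,\tau = e_\lambda$, using $\chi^\lambda(\tau^{-1}) = \overline{\chi^\lambda(\tau)}$ and then reality of $\chi^\lambda$. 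So $e_\lambda$ is a self-adjoint idempotent on $X$, i.e.\ an orthogonal projection onto $X_\lambda$; equivalently, the isotypic decomposition $X = \bigoplus_\mu X_\mu$ is orthogonal. Positive semidefiniteness of $\alpha_\lambda^{(K)}$ is then immediate: for any $x\in X$ we have $\langle e_\lambda x, x\rangle = \langle e_\lambda^2 x, x\rangle = \langle e_\lambda x, e_\lambda^* x\rangle = \|e_\lambda x\|^2 \ge 0$, so $e_\lambda \succeq 0$, and $\alpha_\lambda^{(K)} = \frac{m!}{\ff{\lambda}}\,e_\lambda$ is a positive multiple of it. Specializing to $X = (\C^N)^{\otimes K}$ with its standard Hermitian inner product, on which every permutation of the tensor factors is visibly unitary, yields the form of the statement used in \cref{Tlambda_positive}.

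I do not expect a genuine obstacle here: this is exactly \cite[Theorem 8]{serre98} applied to symmetric groups, and the only step with real content is the self-adjointness computation in the second paragraph, which is where the unitarity hypothesis on $X$ is used. Everything else is the standard theory of central idempotents and character orthogonality; the one point to be careful about is tracking the complex conjugation $\chi^\lambda(\sigma^{-1}) = \overline{\chi^\lambda(\sigma)}$ and invoking integrality (hence reality) of symmetric group characters at the two places it is needed.
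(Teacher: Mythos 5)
Your proof is correct, and it is precisely the standard argument underlying the citation to Serre (the paper gives no proof of its own here, just the reference): identify $\frac{\ff{\lambda}}{|\lambda|!}\alpha_\lambda^{(K)}$ as the central primitive idempotent $e_\lambda$, compute that it acts on each $\mu$-irreducible by $\delta_{\lambda\mu}$ via character orthogonality, then use $\sigma^* = \sigma^{-1}$ on a unitary module together with $\chi^\lambda(\tau^{-1}) = \overline{\chi^\lambda(\tau)} = \chi^\lambda(\tau)$ to see $e_\lambda^* = e_\lambda$, making it an orthogonal projection and hence positive semidefinite. Nothing to add.
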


We come to the main result of this section:
\begin{thm}\label{Tlambda_positive}
Let $\lambda$ be a partition, and suppose that $z_1, \dots, z_n$ are real.
\begin{enumerate}[label=(\roman*), leftmargin=*, itemsep=2pt]
\item\label{Tlambda_semidefinite} If $h_1, \dots, h_N \ge 0$, then $T_\lambda(t)$ is positive semidefinite for all real $t \ge -z_1, \dots, -z_n$.
\item\label{Tlambda_definite} If $h_1, \dots, h_N \ge 0$ and at least $\ell(\lambda)$ values of $h_i$ are strictly positive (in particular, if $h_1, \dots, h_N > 0$ and $\ell(\lambda) \le N$), then $T_\lambda(t)$ is positive definite for all real $t > -z_1, \dots, -z_n$.
\end{enumerate}
\end{thm}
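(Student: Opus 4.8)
The plan is to read everything off the partial-trace formula \cref{Tlambda_formula}. Fix $m := \max\{n,|\lambda|\}$ and specialize \eqref{Tlambda_equation} to $u=t$, so that
$$
T_\lambda(t) \;=\; \sum_{\substack{K\subseteq[n],\ L\subseteq[m],\\ K\subseteq L,\ |L|=|\lambda|}} \frac{(m-|\lambda|)!}{(m-|K|)!}\,\Big(\prod_{l\in[n]\setminus K}(t+z_l)\Big)\, \Tr_{L\setminus K}\!\big(h^{(L\setminus K)}\alpha_\lambda^{(L)}\big).
$$
When $t\geq -z_1,\dots,-z_n$, every factor $t+z_l$ is $\geq 0$, so each scalar coefficient above is $\geq 0$. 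Hence part \ref{Tlambda_semidefinite} will follow once we show that each summand $\Tr_{L\setminus K}(h^{(L\setminus K)}\alpha_\lambda^{(L)})$, regarded as an element of $\End((\C^N)^{\otimes n})$ acting as the identity on the factors in $[n]\setminus K$, is positive semidefinite, since a nonnegative linear combination of positive semidefinite operators is positive semidefinite.

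To prove this positive semidefiniteness we may assume $h=\Diag(h_1,\dots,h_N)$ with $h_i\geq 0$, so that $h^{(L\setminus K)}$ is positive semidefinite with positive semidefinite square root $(h^{1/2})^{(L\setminus K)}$, where $h^{1/2}:=\Diag(\sqrt{h_1},\dots,\sqrt{h_N})$. By \cref{alpha_semidefinite} we have $\alpha_\lambda^{(L)}=\tfrac{|\lambda|!}{\ff\lambda}\,P$, where $P\in\End((\C^N)^{\otimes L})$ is the orthogonal projection onto the $\lambda$-isotypic component. Since $(h^{1/2})^{(L\setminus K)}$ acts as the identity on the tensor factors indexed by $K$ (and on any ambient extra factors), one application of \cref{trace_commute} gives
$$
\Tr_{L\setminus K}\!\big(h^{(L\setminus K)}\alpha_\lambda^{(L)}\big) \;=\; \tfrac{|\lambda|!}{\ff\lambda}\,\Tr_{L\setminus K}\!\big((h^{1/2})^{(L\setminus K)}\,P\,(h^{1/2})^{(L\setminus K)}\big).
$$
The operator inside the partial trace on the right equals $B^{*}PB$ with $B:=(h^{1/2})^{(L\setminus K)}$ self-adjoint and $P=P^{*}=P^{2}$, hence equals $(PB)^{*}(PB)$ and is positive semidefinite on $(\C^N)^{\otimes L}$; applying \cref{semidefinite_partial_trace} shows $\Tr_{L\setminus K}(h^{(L\setminus K)}\alpha_\lambda^{(L)})$ is positive semidefinite on $(\C^N)^{\otimes K}$, and tensoring by the identity on the remaining factors of $(\C^N)^{\otimes n}$ preserves this. (When all $h_i>0$ there is an alternative: by \eqref{four_ways} this operator equals $(h^{-1})^{(K)}\Tr_{L\setminus K}(h^{(L)}\alpha_\lambda^{(L)})$, a product of two commuting positive semidefinite operators, so \cref{semidefinite_product} applies; the general case then follows by continuity, since \eqref{Tlambda_equation} is polynomial in $h_1,\dots,h_N$.) This proves part \ref{Tlambda_semidefinite}.

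For part \ref{Tlambda_definite} assume $t>-z_1,\dots,-z_n$ and that at least $\ell(\lambda)$ of the $h_i$ are strictly positive; then all the scalar coefficients above are $>0$. I isolate the $K=\varnothing$ contribution to \eqref{Tlambda_equation}: there $\Tr_{L\setminus K}=\Tr_L$ is the full trace, so by \eqref{identities_schur} each $\Tr_L(h^{(L)}\alpha_\lambda^{(L)})=|\lambda|!\,s_\lambda(h)$ is a scalar, and summing over the $\binom{m}{|\lambda|}$ choices of $L$ this contribution collapses to $\big(\prod_{l\in[n]}(t+z_l)\big)s_\lambda(h)\cdot I$. All other summands are positive semidefinite by part \ref{Tlambda_semidefinite}, so $T_\lambda(t)\succeq\big(\prod_{l\in[n]}(t+z_l)\big)s_\lambda(h)\cdot I$, and it remains to check $s_\lambda(h)>0$. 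By the semistandard-tableau expansion underlying \eqref{schur_monomial}, $s_\lambda(h_1,\dots,h_N)=\sum_T\prod_i h_i^{\,m_i(T)}\geq 0$, where $T$ ranges over semistandard Young tableaux of shape $\lambda$ with entries in $[N]$ and $m_i(T)$ is the number of entries of $T$ equal to $i$; choosing $i_1<\dots<i_{\ell(\lambda)}$ with $h_{i_1},\dots,h_{i_{\ell(\lambda)}}>0$ and letting $T$ fill the $j$th row of $\lambda$ entirely with $i_j$ (a valid tableau) gives a strictly positive term $\prod_j h_{i_j}^{\lambda_j}$, so $s_\lambda(h)>0$ and $T_\lambda(t)$ is positive definite.

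I expect the main obstacle to be the positive semidefiniteness of $\Tr_{L\setminus K}(h^{(L\setminus K)}\alpha_\lambda^{(L)})$: the two factors $h^{(L\setminus K)}$ and $\alpha_\lambda^{(L)}$ do not commute — they act on overlapping but distinct sets of tensor factors — so \cref{semidefinite_product} cannot be invoked directly, and the fix is the $B^{*}PB\succeq 0$ trick combined with moving a square root of $h^{(L\setminus K)}$ through the partial trace via \cref{trace_commute}. Everything else is bookkeeping with the constants in \eqref{Tlambda_equation} and the tableau description of $s_\lambda$.
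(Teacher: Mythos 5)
Your proof is correct and follows the same overall strategy as the paper: expand $T_\lambda(t)$ via the partial-trace formula \eqref{Tlambda_equation}, observe that all scalar coefficients are nonnegative when $t \ge -z_l$, reduce to showing each $\Tr_{L\setminus K}(h^{(L\setminus K)}\alpha_\lambda^{(L)})$ is positive semidefinite, and for part \ref{Tlambda_definite} isolate the $K=\varnothing$ contribution $z_{[n]}s_\lambda(h)\cdot I$ and check $s_\lambda(h)>0$ via the tableau expansion. Where you differ is in the key positive-semidefiniteness step. The paper invokes \eqref{four_ways} to rewrite $\Tr_{L\setminus K}(h^{(L\setminus K)}\alpha_\lambda^{(L)}) = (h^{-1})^{(K)}\Tr_{L\setminus K}(h^{(L)}\alpha_\lambda^{(L)})$, then applies \cref{semidefinite_product} twice and \cref{semidefinite_partial_trace} once; this literally requires $h$ invertible, and the case where some $h_i=0$ is handled only implicitly (by continuity, unremarked). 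Your sandwich argument — factoring $h^{(L\setminus K)}=(h^{1/2})^{(L\setminus K)}(h^{1/2})^{(L\setminus K)}$, cycling one factor through $\Tr_{L\setminus K}$ via \cref{trace_commute}, and recognizing $B^*PB=(PB)^*(PB)\succeq 0$ with $P$ the isotypic projection from \cref{alpha_semidefinite} — handles $h_i\ge 0$ uniformly with no invertibility assumption and no appeal to \cref{semidefinite_product} or \eqref{four_ways}. The two routes are comparably short; yours is marginally tighter at the boundary, and you correctly flag that the paper's route needs the extra continuity remark when some $h_i$ vanish.
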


Recall that $T_\lambda(u) = 0$ when $\ell(\lambda) > N$. Hence \cref{Tlambda_positive}\ref{Tlambda_definite} implies that if $h_1, \dots, h_N > 0$ and $t > -z_1, \dots, z_n$, then all of the nontrivial $T_\lambda(t)$'s are positive definite.

\begin{proof}
By translation, we may assume that $t=0$. Suppose that $h_1, \dots, h_N \ge 0$ and $z_1, \dots, z_n \ge 0$. We claim it suffices to show that each operator $\Tr_{L\setminus K}(h^{(L\setminus K)}\alpha_\lambda^{(L)})$ in the expansion \eqref{Tlambda_equation} of $T_\lambda$ is positive semidefinite. Indeed, then $T_\lambda$ is a sum of positive semidefinite operators with nonnegative coefficients, so it is positive semidefinite. This proves part \ref{Tlambda_semidefinite}. Moreover, the sum of terms in \eqref{Tlambda_equation} with $K=\emptyset$ is $z_{[n]} s_\lambda(h)$, by \eqref{defn_Tlambda_expanded}. If at least $\ell(\lambda)$ values of $h_i$ are positive and $z_1, \dots, z_n > 0$, then $z_{[n]} s_\lambda(h)$ is a positive scalar multiple of the identity by \eqref{schur_monomial} and hence positive definite, showing that $T_\lambda$ is positive definite. This proves part \ref{Tlambda_definite}.

To see that $\Tr_{L\setminus K}(h^{(L\setminus K)}\alpha_\lambda^{(L)})$ is positive semidefinite, we use \eqref{four_ways} to write
$$
\Tr_{L\setminus K}(h^{(L\setminus K)}\alpha_\lambda^{(L)}) = (h^{-1})^{(K)}\Tr_{L\setminus K}(h^{(L)}\alpha_\lambda^{(L)}).
$$
Note that $h^{(L)}$ and $\alpha_\lambda^{(L)}$ commute, $h^{(L)}$ is positive definite, and $\alpha_\lambda^{(L)}$ is positive semidefinite by \cref{alpha_semidefinite}, so $h^{(L)}\alpha_\lambda^{(L)}$ is positive semidefinite by \cref{semidefinite_product}. By \cref{semidefinite_partial_trace}, $\Tr_{L\setminus K}(h^{(L)}\alpha_\lambda^{(L)})$ is positive semidefinite. Finally, $(h^{-1})^{(K)}$ is positive definite, and by \eqref{four_ways} it commutes with $\Tr_{L\setminus K}(h^{(L)}\alpha_\lambda^{(L)}) = \Tr_{L\setminus K}(\alpha_\lambda^{(L)}h^{(L)})$, so $(h^{-1})^{(K)}\Tr_{L\setminus K}(h^{(L)}\alpha_\lambda^{(L)})$ is positive semidefinite by \cref{semidefinite_product}.
\end{proof}

As an immediate consequence, we obtain \cref{positivity}:
\begin{proof}[Proof of \cref{positivity}]
This follows from \cref{quasiexp_coordinates,Tlambda_positive}.
\end{proof}

\section{From polynomials to quasi-exponentials}\label{sec_poly_to_quasi}

\noindent In this section we explain why the positivity theorem for spaces of quasi-exponentials (\cref{positivity}) is in fact a consequence of the positivity theorem for spaces of polynomials (\cref{KP_positivity}). The proof proceeds in two separate steps; the first is showing that \cref{KP_positivity} implies \cref{positivity}\ref{positivity_weak}, and the second is showing that \cref{positivity}\ref{positivity_weak} implies \cref{positivity}\ref{positivity_strict}.

\subsection{Proof that \texorpdfstring{\cref{KP_positivity}}{Theorem \ref{KP_positivity}} implies \texorpdfstring{\cref{positivity}\ref{positivity_weak}}{Theorem \ref{positivity}\ref{positivity_weak}}}\label{poly_proof_weak}
We will need the following properties of the Wronskian. To prove them, we use the identity
\begin{align}\label{wronskian_induction}
\Wr(f_1, \dots, f_N) = f_1^N\cdot\Wr\hspace*{-2pt}\big(1, \textstyle\frac{f_2}{f_1}, \dots, \frac{f_N}{f_1}\big) = f_1^N\cdot\Wr\hspace*{-2pt}\big(\du{u}{}\frac{f_2}{f_1}, \dots, \du{u}{}\frac{f_N\emph{}}{f_1}\big).
\end{align}

\begin{lem}\label{wronskian_degree}
Let $V = \langle e^{h_1u}p_1(u), \dots, e^{h_Nu}p_N(u)\rangle$ be an $N$-dimensional space of quasi-exponentials. Suppose that there are $l$ distinct values among $h_1, \dots, h_N$, with multiplicities $m_1, \dots, m_l > 0$. For $1 \le i \le N$, let $d_i \ge 0$ denote the degree of $p_i$; we may perform a change of basis for $V$ so that for all $i\neq j$, if $h_i = h_j$ then $d_i \neq d_j$. Then $\Wr(V) = e^{(h_1 + \cdots + h_N)u}g(u)$, where $g(u)$ is a polynomial of degree $d_1 + \cdots + d_N - \binom{m_1}{2} - \cdots - \binom{m_l}{2}$.
\end{lem}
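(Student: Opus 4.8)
The plan is to prove the degree formula by induction on $N$, using the identity \eqref{wronskian_induction}, but working in the slightly larger class of \emph{quasi-rational functions}: functions of the form $e^{cu}r(u)$ with $c\in\C$ and $r$ a nonzero rational function. For such a function I set its degree to be $\deg r:=(\text{degree of the numerator})-(\text{degree of the denominator})$, equivalently the order of its pole at $u=\infty$ (counted negatively if it has a zero there). The Wronskian of linearly independent quasi-rational functions $e^{c_iu}r_i(u)$ is again quasi-rational of the form $e^{(c_1+\cdots+c_N)u}R(u)$ with $R$ rational, so its degree $\deg R$ is well defined; here one uses the standard fact $\Wr(\phi f_1,\dots,\phi f_N)=\phi^N\Wr(f_1,\dots,f_N)$, which itself follows from \eqref{wronskian_induction}. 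I will prove: if $W=\langle e^{c_1u}r_1(u),\dots,e^{c_Nu}r_N(u)\rangle$ has the property that $\deg r_i\neq\deg r_j$ whenever $c_i=c_j$ and $i\neq j$, and if $n_1,\dots,n_k$ are the multiplicities of the distinct values among $c_1,\dots,c_N$, then $\deg\Wr(W)=\sum_i\deg r_i-\sum_t\binom{n_t}{2}$. The lemma is the special case $r_i=p_i$, since we already know $\Wr(V)=e^{(h_1+\cdots+h_N)u}g(u)$ with $g$ a polynomial, so $g$ is the rational part $R$. The preliminary assertion in the statement---that one may reduce to $d_i\neq d_j$ when $h_i=h_j$---follows by Gaussian elimination within each group of basis vectors sharing an exponent: subtracting a scalar multiple of one $p_j$ from another of the same degree strictly lowers the degree and cannot yield $0$ by linear independence, and the degrees stay nonnegative; the identical argument works for rational $r_i$.

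For $N=1$ the formula reads $\deg\Wr(W)=\deg r_1$, which is clear. For the inductive step, relabel so that $c_1$ occurs with multiplicity $m_1$, and let $I_1\ni 1$ be the set of indices $i$ with $c_i=c_1$. By \eqref{wronskian_induction}, $\Wr(W)=(e^{c_1u}r_1)^N\cdot\Wr(W')$ where $W'=\langle (e^{c_ju}r_j/(e^{c_1u}r_1))':2\le j\le N\rangle$; this is $(N-1)$-dimensional since a linear dependence among its generators, combined with the function $1$, would contradict the independence of $f_1,\dots,f_N$. The $j$-th generator of $W'$ is the derivative of $e^{(c_j-c_1)u}(r_j/r_1)$, a quasi-rational function of exponent $c_j-c_1$ and degree $\deg r_j-\deg r_1$. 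Here is the point where the hypothesis is used: if $c_j\neq c_1$, then $(e^{(c_j-c_1)u}s)'=e^{(c_j-c_1)u}\bigl((c_j-c_1)s+s'\bigr)$ has exponent $c_j-c_1\neq0$ and degree $\deg s$; if $c_j=c_1$, the generator is $(r_j/r_1)'$, and since $\deg(r_j/r_1)=\deg r_j-\deg r_1\neq0$, differentiating a rational function of nonzero degree $\delta$ produces one of degree exactly $\delta-1$. So $W'$ is a space of quasi-rational functions whose distinct exponents have multiplicities $m_1-1,n_2,\dots,n_k$ (omitting $m_1-1$ if it vanishes), and whose degrees, \emph{within each exponent group, are pairwise distinct}---exactly the hypothesis needed to apply the inductive claim to $W'$ with its natural basis.

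Then the inductive hypothesis gives $\deg\Wr(W')=\bigl(\sum_{j\in I_1\setminus\{1\}}(\deg r_j-\deg r_1-1)+\sum_{j\notin I_1}(\deg r_j-\deg r_1)\bigr)-\binom{m_1-1}{2}-\sum_{t\ge2}\binom{n_t}{2}$. Simplifying the parenthesized sum gives $\sum_i\deg r_i-N\deg r_1-(m_1-1)$, so $\deg\Wr(W)=N\deg r_1+\deg\Wr(W')=\sum_i\deg r_i-(m_1-1)-\binom{m_1-1}{2}-\sum_{t\ge2}\binom{n_t}{2}$, and the elementary identity $(m_1-1)+\binom{m_1-1}{2}=\binom{m_1}{2}$ turns this into $\sum_i\deg r_i-\sum_t\binom{n_t}{2}$, completing the induction; specializing to $r_i=p_i$ yields the lemma. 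The content is almost entirely bookkeeping: tracking how the exponent--multiplicity data of $W$ transforms under $W\mapsto W'$, and correctly handling the degree of a differentiated rational function, which drops by exactly one precisely because the relevant degree is nonzero (and would behave unpredictably if it were zero). I expect this case analysis, rather than any genuine difficulty, to be the main thing one must get right.
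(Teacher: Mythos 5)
Your proof is correct, and it takes a genuinely different (and more self-contained) route than the paper's. The paper's proof is a two-sentence sketch: induct on $N$ via \eqref{wronskian_induction}, after reducing to the case that each $p_i$ is a monomial and reordering so that $d_1$ is minimal. That reduction is left unjustified, and one step of the recursion when $h_j \neq h_1$ already produces a binomial $(h_j-h_1)u^{d_j-d_1}+(d_j-d_1)u^{d_j-d_1-1}$ rather than a monomial, so the paper's sketch implicitly relies on repeated re-reduction or a highest-degree-term argument that is not spelled out. Your approach sidesteps both issues by enlarging the ambient class to quasi-rational functions $e^{cu}r(u)$ and working with the degree at infinity $\deg r = \deg(\text{num}) - \deg(\text{den})$, which is stable under the operations $f \mapsto f/f_1$ and $f \mapsto f'$ that \eqref{wronskian_induction} forces on you. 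In that setting the one nontrivial fact you need — that differentiation drops the degree of a rational function by exactly one when that degree is nonzero, and otherwise by at least two — is clean, and it is exactly what the ``pairwise distinct degrees within each exponent group'' hypothesis buys you, both for the current step and for propagating the hypothesis to $W'$. You also lose nothing by abandoning the $\min$ ordering, since negative degrees are now permitted. The bookkeeping ($\sum_{j\ge2}(\deg r_j-\deg r_1)=\sum_i\deg r_i-N\deg r_1$, and $(m_1-1)+\binom{m_1-1}{2}=\binom{m_1}{2}$) checks out, the preservation of the exponent-group structure and of the distinctness hypothesis under $W\mapsto W'$ is verified correctly, and specializing back to $r_i=p_i$ (with $g$ already known to be a polynomial, so its polynomial degree equals its degree at infinity) recovers the lemma. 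In short: same inductive skeleton as the paper, but carried out rigorously in a class closed under the recursion, which I would regard as an improvement on the published sketch.
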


\begin{proof}
We proceed by induction on $N$, with base case $N=0$. For the induction step, we may assume that every $p_i$ is a monomial (i.e.\ $p_i(u) = u^{d_i}$), and we reorder the functions so that $d_1 = \min(d_1, \dots, d_N)$. The result then follows using \eqref{wronskian_induction}.
\end{proof}

\begin{lem}\label{wronskian_zero}
Let $V = \langle f_1(u), \dots, f_N(u)\rangle \in \Gr(N,\entire)$, and let $t\in\C$. For $1 \le i \le N$, let $c_i \ge 0$ denote the order of the zero of $f_i(u)$ at $u=t$ (so $c_i = 0$ if $f_i(t) \neq 0$). Suppose that there are $l$ distinct values among $c_1, \dots, c_N$, with multiplicities $m_1, \dots, m_l > 0$. Then $\Wr(V)$ has a zero at $u=t$ of order at least $c_1 + \cdots + c_N + \binom{m_1}{2} + \cdots + \binom{m_l}{2} - \binom{N}{2}$.
\end{lem}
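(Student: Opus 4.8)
The plan is to reduce the statement to two standard facts about the \emph{exponents at $u=0$} of a space of functions. By translating, assume $t=0$, and for a nonzero function $g$ holomorphic at $0$ write $\on{ord}(g)$ for its order of vanishing at $0$. For $a\ge 0$, let $V_{\ge a}:=\{f\in V : f=0 \text{ or } \on{ord}(f)\ge a\}$; since the linear map sending $f\in V_{\ge a}$ to its coefficient of $u^a$ has kernel $V_{\ge a+1}$, we have $\dim(V_{\ge a}/V_{\ge a+1})\le 1$, and $V_{\ge a}=0$ for $a$ large (otherwise $V$ would contain infinitely many functions with distinct orders of vanishing). Hence the set $\{\on{ord}(f) : 0\neq f\in V\}$ consists of exactly $N$ distinct nonnegative integers $e_1<e_2<\cdots<e_N$ (the exponents of $V$ at $0$), and $\dim V_{\ge a}=\#\{j:e_j\ge a\}$ for all $a\ge 0$. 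Choosing a basis $(g_1,\dots,g_N)$ of $V$ with $\on{ord}(g_j)=e_j$, each entry $g_j^{(i-1)}$ of the Wronskian matrix vanishes at $0$ to order $\ge e_j-(i-1)$, so every term in the determinant expansion of $\Wr(g_1,\dots,g_N)$ vanishes to order $\ge\sum_{j=1}^{N}(e_j-(j-1))=\sum_{j=1}^{N}e_j-\binom N2$; since $\Wr(V)$ is a nonzero scalar multiple of $\Wr(g_1,\dots,g_N)$, this proves the first inequality
$$\on{ord}(\Wr(V))\ \ge\ \sum_{j=1}^{N}e_j-\binom N2.$$

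It then remains to prove $\sum_{j=1}^{N}e_j\ge\sum_{i=1}^{N}c_i+\sum_{s=1}^{l}\binom{m_s}{2}$. Order the distinct values of the $c_i$ as $a_1<a_2<\cdots<a_l$, with respective multiplicities $m_1,\dots,m_l$, and set $M_s:=m_s+m_{s+1}+\cdots+m_l$ for $1\le s\le l$ and $M_{l+1}:=0$, so that $M_1=N$ and $M_s-M_{s+1}=m_s$. The functions $f_i$ with $c_i\ge a_s$ form a linearly independent subset (of the basis $f_1,\dots,f_N$ of $V$) of size $M_s$ contained in $V_{\ge a_s}$, so $\#\{j:e_j\ge a_s\}=\dim V_{\ge a_s}\ge M_s$; since the $e_j$ are strictly increasing integers, this forces $e_{N-M_s+1}\ge a_s$ and hence $e_{N-M_s+k}\ge a_s+(k-1)$ for $1\le k\le m_s$. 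The index blocks $\{\,N-M_s+1,\dots,N-M_s+m_s\,\}=\{\,N-M_s+1,\dots,N-M_{s+1}\,\}$, for $s=1,\dots,l$, partition $\{1,\dots,N\}$, so summing the bounds over all $j$ gives
$$\sum_{j=1}^{N}e_j=\sum_{s=1}^{l}\sum_{k=1}^{m_s}e_{N-M_s+k}\ \ge\ \sum_{s=1}^{l}\Bigl(m_sa_s+\binom{m_s}{2}\Bigr)=\sum_{i=1}^{N}c_i+\sum_{s=1}^{l}\binom{m_s}{2}.$$
Combining the two displayed inequalities yields the claimed bound $\on{ord}(\Wr(V))\ge c_1+\cdots+c_N+\binom{m_1}{2}+\cdots+\binom{m_l}{2}-\binom N2$.

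I expect the only real work to be bookkeeping: stating cleanly the two standard facts about exponents (that they form $N$ distinct values, and that $\Wr$ vanishes to order at least $\sum_j(e_j-j+1)$), and checking that the blocks indexed by $s$ tile $\{1,\dots,N\}$ so that the per-block estimates add up correctly. No analytic subtlety arises, since we only ever need the lower bound on $\on{ord}(\Wr(V))$ and never its exact value; and $f_1,\dots,f_N$ is automatically a basis of $V$ because $\dim V=N$, so every sub-collection is linearly independent.
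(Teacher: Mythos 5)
Your proof is correct, and the central idea agrees with the paper's: pass to a basis of $V$ whose elements vanish at $t$ to distinct orders $e_1<\cdots<e_N$, and then both reduce to two facts, namely that $\Wr(V)$ vanishes to order at least $\sum_j e_j-\binom{N}{2}$ and that $\sum_j e_j\ge\sum_i c_i+\sum_s\binom{m_s}{2}$. The execution differs, though. The paper's proof is a one-liner: ``perform a change of basis so that $c_1,\dots,c_N$ become distinct'' and then reduce to the pure-monomial case $f_i=(u-t)^{c_i}$ and compute the Wronskian's order by induction via \eqref{wronskian_induction}. It leaves the combinatorial comparison (that the bound with the new, distinct orders dominates the originally stated bound) entirely implicit in the phrase ``we may perform a change of basis.'' You instead bound $\operatorname{ord}(\Wr(V))$ by the order of each term in the Leibniz expansion of the determinant (no induction, no reduction to monomials), and you prove the combinatorial inequality explicitly by comparing the exponents $e_j$ to the multiset of $c_i$'s through the filtration $V_{\ge a}$ and the partition of $\{1,\dots,N\}$ into blocks of size $m_s$. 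The upshot is that your argument is more self-contained and rigorously justifies the reduction step that the paper glosses over, at the cost of being longer; the paper's version is shorter because it reuses \eqref{wronskian_induction}, which it already needs for the companion \cref{wronskian_degree}.
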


\begin{proof}
We may perform a change of basis for $V$ so that $c_1, \dots, c_N$ become distinct. It then suffices to prove the result when $f_i(u) = (u-t)^{c_i}$ for all $1 \le i \le N$, in which case it follows from \eqref{wronskian_induction} by induction.
\end{proof}

We now prove \cref{positivity}\ref{positivity_weak}. By translation, we may assume that $t=0$, so that $z_1, \dots, z_n \ge 0$. Throughout the proof, we fix $m_1, \dots, m_l > 0$ and $d_1, \dots, d_N \ge 0$ as in \cref{wronskian_degree}, which also fixes $n = d_1 + \cdots + d_N - \binom{m_1}{2} - \cdots - \binom{m_l}{2}$.

We claim that it suffices to prove the result when $h_1, \dots, h_N \ge 0$ are integers and $z_1, \dots, z_n \ge 0$ are distinct. Indeed, this case implies the case when $h_1, \dots, h_N \ge 0$ are rationals and $z_1, \dots, z_n \ge 0$ are distinct, by rescaling the variable $u$. This then implies the case when $h_1, \dots, h_N \ge 0$ are arbitrary and $z_1, \dots, z_n \ge 0$ are distinct, by expressing each $h_i$ as a limit of rational numbers. (Here we use the fact that if $W\in\Gr(N,\entire)$ is a real space of quasi-exponentials, then $\Wr(W)$ is real, and hence its nonreal zeros come in complex-conjugate pairs. In particular, if $W$ is sufficiently close to $V$, then $\Wr(W)$ has $n$ distinct real zeros.) Finally, this implies the general case, since we can perturb the given space $V$ of quasi-exponentials so that $z_1, \dots, z_n \ge 0$ become distinct. This follows from the fact that the Wronski map $\mathcal{Q} \to \mathbb{P}(e^{(h_1 + \cdots + h_N)u}\cdot \mathbb{C}_{\le n}[u])$ is a finite morphism (see \cite[Proposition 4.2]{mukhin_tarasov_varchenko09c}), where $\mathcal{Q}$ is the variety of all spaces of quasi-exponentials of the form $\langle e^{h_1u}q_1(u), \dots, e^{h_Nu}q_N(u)\rangle$, such that $h_1, \dots, h_N$ are fixed and each $q_i$ ranges over all polynomials of degree at most $d_i$.

Thus we suppose that $h_1, \dots, h_N \ge 0$ are integers and $z_1, \dots, z_n \ge 0$ are distinct. Define
$$
V_k := \Big\langle\Big(1 + \frac{u}{k}\Big)^{h_1k}p_1(u), \dots, \Big(1 + \frac{u}{k}\Big)^{h_Nk}p_N(u)\Big\rangle \quad \text{ for } k \ge 0.
$$
Note that for all $k$ sufficiently large (denoted $k\gg 0$), $\big(1 + \frac{u}{k}\big)^{h_ik}p_i(u)$ has degree $h_ik + d_i$ and a zero of order $h_ik$ at $u=-k$. Therefore by \cref{wronskian_degree,wronskian_zero} (with $t=-k$), we can write
$$
\Wr(V_k) = \Big(1 + \frac{u}{k}\Big)^{(h_1 + \cdots + h_N)k + \binom{m_1}{2} + \cdots + \binom{m_l}{2} - \binom{N}{2}}g_k(u) \quad \text{ for all } k \gg 0,
$$
where $g_k\in\C[u]$ is a monic polynomial of degree $n$.

For all $1 \le i \le N$, we have $\lim_{k\to\infty}\big(1 + \frac{u}{k}\big)^{h_ik}p_i(u) = e^{h_iu}p_i(u)$ uniformly on compact subsets of $\C$. Hence $\lim_{k\to\infty}g_k(u) = (u+z_1)\cdots (u+z_n)$, so $g_k(u)$ has distinct nonpositive real zeros for all $k\gg 0$. By \cref{KP_positivity}, $V_k$ is real and totally nonnegative for all $k\gg 0$. Taking $k\to\infty$, we get that $V$ is real and totally nonnegative.\hfill\qed

\begin{rem}
In a similar fashion, one can deduce the reality theorem for quasi-exponentials (\cref{MTV_quasiexp}) from the reality theorem for polynomials (\cref{MTV_SS}).
\end{rem}

\subsection{Proof that \texorpdfstring{\cref{positivity}\ref{positivity_weak}}{Theorem \ref{positivity}\ref{positivity_weak}} implies \texorpdfstring{\cref{positivity}\ref{positivity_strict}}{Theorem \ref{positivity}\ref{positivity_strict}}}\label{poly_proof_strict}
By translation, we may assume that $t=0$. Suppose that part \ref{positivity_weak} of \cref{positivity} is true and we are given $V$ as in part \ref{positivity_strict}. Fix $0 < c < \min(h_1, \dots, h_N)$, and set
$$
W := e^{-cu}\cdot V = \langle e^{(h_1-c)u}p_1(u), \dots, e^{(h_N-c)u}p_N(u)\rangle.
$$
Then the exponents $h_1 - c, \dots, h_N - c$ of $W$ are nonnegative, and
$$
\Wr(W) = e^{-Ncu}\Wr(V) = e^{(h_1 + \cdots + h_N - Nc)u}(u+z_1)\cdots (u+z_n).
$$
Hence $W$ is totally nonnegative about $u=0$ by part \ref{positivity_weak}, so it has a matrix representative $A$ as in \eqref{A_matrix} such that all $N\times N$ minors of $A$ are nonnegative. Moreover, since $z_1, \dots, z_n > 0$, from \eqref{equation_wronskian_empty} we get $\Delta_\varnothing(W) \neq 0$, so the initial $N\times N$ minor of $A$ is positive.

Since $V = e^{cu}\cdot W$, we see that $BA$ is a matrix representative of $V$ about $u=0$, where $B$ is the infinite lower-triangular matrix representing the linear operator $e^{cu}$. Explicitly,
$$
B = \exp\begin{bmatrix}
0 & 0 & 0 & 0 & \cdots \\
c & 0 & 0 & 0 & \cdots \\
0 & 2c & 0 & 0 & \cdots \\
0 & 0 & 3c & 0 & \cdots \\
\vdots & \vdots & \vdots & \vdots & \ddots
\end{bmatrix},
$$
where $\exp(\cdot)$ is the matrix exponential. Then $B$ is triangularly totally positive in the sense that all of its minors which do not trivially vanish by lower-triangularity are positive:
$$
\det(B_{\{i_1, \dots, i_m\}, \{j_1, \dots, j_m\}}) > 0 \quad \text{ for all } m\in\mathbb{N} \text{ and } i_1 \ge j_1, \dots, i_m \ge j_m.
$$
(This is well-known; see \cite[Proof of Lemma 2.29]{karp24}.) Hence by the Cauchy--Binet identity, all $N\times N$ minors of $BA$ are positive, so $V$ is totally positive about $u=0$.\hfill\qed

\section{From quasi-exponentials to polynomials}\label{sec_quasi_to_poly}

\noindent In this section we show that $T_\lambda(u)|_{h_1 = \cdots = h_N = 0} = \beta_\lambda(u)$, where the operators $\beta_\lambda(u)$ were introduced in \cite{karp_purbhoo} and shown to be universal Pl\"{u}cker coordinates for the Wronski map on spaces of polynomials. We then further explore connections with the paper \cite{karp_purbhoo}.

Recall $\alpha_\lambda^{(K)}\in\C[\SG{K}]$ from \eqref{defn_alpha}. Following \cite{karp_purbhoo}, given a partition $\lambda$, we define
\begin{align}\label{defn_beta}
\beta_\lambda(u) := \sum_{\substack{K \subseteq [n], \\ |K| = |\lambda|}}\Big(\prod_{k\in K}(u+z_k)\Big)\,\alpha_\lambda^{(K)} \in\C[\SG{n}]
\end{align}
as a polynomial in $u$. In particular, we may regard $\beta_\lambda(u)$ as an element of $\End((\C^N)^{\otimes n})$.
\begin{example}\label{eg_betalambda}
The operators $\beta_\lambda := \beta_\lambda(0)$ for partitions $\lambda$ of size at most $2$ are as follows:
\begin{align*}
\beta_\varnothing &= z_{[n]}\Sid, & \beta_{(2)} &= \sum_{1 \le k < l \le n}z_{[n]\setminus\{k,l\}}(\Sid + \sigma_{k,l}), \\
\beta_{(1)} &= \sum_{k=1}^n z_{[n]\setminus\{k\}}\Sid, & \beta_{(1,1)} &= \sum_{1 \le k < l \le n}z_{[n]\setminus\{k,l\}}(\Sid - \sigma_{k,l}),
\end{align*}
where $\Sid\in\SG{n}$ denotes the identity. Comparing with \cref{eg_Tlambda}, we see that $T_\lambda\eval{h_1 = \cdots = h_N =0} = \beta_\lambda$ when $|\lambda| \le 2$, in agreement with \cref{Tlambda_beta}.
\end{example}

\begin{thm}\label{Tlambda_beta}
For partitions $\lambda$, we have
$$
T_\lambda(u)|_{h_1 = \cdots = h_N = 0} = \beta_\lambda(u) \quad \text{ in } \End((\C^N)^{\otimes n}).
$$
\end{thm}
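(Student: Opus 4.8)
The plan is to obtain the identity as a direct specialization of the partial-trace formula \cref{Tlambda_formula}. Fix $m \ge \max\{n,|\lambda|\}$ and recall \eqref{Tlambda_equation}, which writes $T_\lambda(u)$ as a sum over pairs $K\subseteq L$ with $K\subseteq[n]$, $L\subseteq[m]$, and $|L|=|\lambda|$, whose summand is
\[
\frac{(m-|\lambda|)!}{(m-|K|)!}\,\Big(\prod_{l\in[n]\setminus K}(u+z_l)\Big)\,\Tr_{L\setminus K}\big(h^{(L\setminus K)}\alpha_\lambda^{(L)}\big).
\]
The first step is to observe that $h^{(S)}$ is the operator acting as $h$ on each tensor factor indexed by $S$ and as the identity elsewhere, so $h^{(S)}|_{h=0}=0$ as soon as $S\neq\emptyset$, while $h^{(\emptyset)}$ is the identity operator for every value of $h$. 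Hence, after setting $h=0$, every summand of \eqref{Tlambda_equation} with $L\setminus K\neq\emptyset$ vanishes, and only the diagonal terms $L=K$ survive.

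The second step is to simplify these surviving terms. For $L=K$ we have $|K|=|L|=|\lambda|$, so the prefactor $\tfrac{(m-|\lambda|)!}{(m-|K|)!}$ equals $1$; moreover $L\setminus K=\emptyset$, so $\Tr_{L\setminus K}$ is the identity map on $\End((\C^N)^{\otimes K})$ and $h^{(L\setminus K)}|_{h=0}=h^{(\emptyset)}=\id$, whence $\Tr_{L\setminus K}(h^{(L\setminus K)}\alpha_\lambda^{(L)})|_{h=0}=\alpha_\lambda^{(K)}$. With $L=K$, the constraints $K\subseteq[n]$, $K\subseteq L$, $|L|=|\lambda|$ collapse to a sum over all $K\subseteq[n]$ with $|K|=|\lambda|$, so
\[
T_\lambda(u)|_{h_1=\cdots=h_N=0}=\sum_{\substack{K\subseteq[n],\\ |K|=|\lambda|}}\Big(\prod_{l\in[n]\setminus K}(u+z_l)\Big)\alpha_\lambda^{(K)},
\]
which is precisely $\beta_\lambda(u)$ by \eqref{defn_beta} (cf.\ also \cref{eg_betalambda}). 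A quick cross-check against \cref{eg_Tlambda} confirms the identity for $|\lambda|\le 2$.

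I do not expect a genuine obstacle: the entire content is the observation that $h=0$ collapses the double sum of \eqref{Tlambda_equation} to the single sum defining $\beta_\lambda(u)$. The only point requiring a little care is the degenerate index $L=K$, where one must invoke the conventions that $h^{(\emptyset)}$ and $\Tr_\emptyset$ act as the identity and check that the binomial prefactor degenerates to $1$.
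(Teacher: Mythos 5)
Your proof is correct and follows exactly the same route as the paper: specialize $h=0$ in the partial-trace formula \eqref{Tlambda_equation}, observe that $h^{(L\setminus K)}$ becomes the identity when $L=K$ and zero when $K\subsetneq L$, and conclude that only the diagonal terms survive, yielding \eqref{defn_beta}. The extra care you take with the degenerate binomial prefactor and the conventions for $\Tr_\emptyset$ and $h^{(\emptyset)}$ is precisely what the paper's terse "whence we obtain precisely the formula" leaves implicit.
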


\begin{proof}
We may assume that $h=0$. Then the operator $h^{(L\setminus K)}$ is the identity if $K = L$ and zero if $K\subsetneq L$. Therefore in the formula \eqref{Tlambda_equation} for $T_\lambda(u)$, we only need to sum over the terms with $L = K$, whence we obtain precisely the formula \eqref{defn_beta} for $\beta_\lambda(u)$.
\end{proof}

\begin{rem}\label{KP_problem}
We mention that \cite[Problem 5.4]{karp_purbhoo} poses the problem of finding an explicit formula for the element of the Bethe algebra $\betheh \subseteq U(\gl_N[t])$ which coincides with the operator $\beta_\lambda(u)$ when restricted to $\C^N(-z_1) \otimes \cdots \otimes \C^N(-z_n)$. Recall $B_i(u)\in\betheh$ from \eqref{defn_universal}, and by \cref{Tlambda_beta} we have $T_\lambda(u)\eval{h_1 = \cdots = h_N = 0} = \beta_\lambda(u)$. Hence \eqref{equation_ALTZ_determinant} (via \eqref{equation_equality_columns}) provides a formula for the desired element of $\betheh$, but it is not very explicit. We leave it as an open problem to find a more satisfying formula.
\end{rem}

We may also regard $\beta_\lambda(u)$ as an element of $\End(\C[\SG{n}])$, acting by left multiplication. We then obtain the following result proved in \cite{karp_purbhoo}. (The result therein is stated for $t=0$; the case of general $t\in\C$ follows straightforwardly by translation.)
\begin{cor}{\cite[Theorem 1.3(v)]{karp_purbhoo}}\label{KP_coordinates}
Let $z_1, \dots, z_n\in \C$, and fix $t\in\C$.
\begin{enumerate}[label=(\roman*), leftmargin=*, itemsep=2pt]
\item\label{KP_coordinates_forward} If $E\subseteq\C[\SG{n}]$ is a common eigenspace of the operators $\beta_\lambda(t)$, and $\ell(\lambda) \le N$ for all partitions $\lambda$ such that the corresponding eigenvalue of $\beta_\lambda(t)$ is nonzero, then such eigenvalues are the Pl\"{u}cker coordinates about $u=t$ of a space $V_E\in\Gr(N,\C[u])$ of polynomials with Wronskian $(u+z_1) \cdots (u+z_n)$.
\item\label{KP_coordinates_backward} Every space of polynomials $V\in\Gr(N,\C[u])$ with Wronskian $(u+z_1)\cdots(u+z_n)$ equals $V_E$ for some eigenspace $E$, as in part \ref{KP_coordinates_forward}.
\end{enumerate}
\end{cor}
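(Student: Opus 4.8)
The plan is to deduce \cref{KP_coordinates} from \cref{quasiexp_coordinates} by setting $h_1=\cdots=h_N=0$ and invoking \cref{Tlambda_beta} to identify $T_\lambda(t)|_{h=0}$ with $\beta_\lambda(t)$. The content of \cref{quasiexp_coordinates} then becomes exactly that of \cref{KP_coordinates}, except that $\beta_\lambda(t)$ acts on $(\C^N)^{\otimes n}$ rather than on $\C[\SG n]$ by left multiplication; so the remaining task is to transport the statement between these two $\C[\SG n]$-modules, which I would do through Schur--Weyl duality. Under the decompositions of $(\C^N)^{\otimes n}$ and of the left-regular module $\C[\SG n]$ into Specht modules $\specht\mu$ --- over $\mu\vdash n$ with $\ell(\mu)\le N$ in the first case, over all $\mu\vdash n$ in the second --- the element $\beta_\lambda(t)\in\C[\SG n]$ acts on every copy of $\specht\mu$ through the same representation $\rho_\mu\colon\C[\SG n]\to\End(\specht\mu)$ (where on $(\C^N)^{\otimes n}$ it is $T_\lambda(t)|_{h=0}$). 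Hence on both modules the common eigenspaces and their eigenvalue-vectors are governed entirely by the operators $\rho_\mu(\beta_\lambda(t))$; the only difference is which $\specht\mu$ occur, and with what multiplicity.

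The crux is the action of the ``top'' operators. For $\mu\vdash n$ the only size-$n$ subset of $[n]$ is $[n]$ itself, so $\beta_\mu(t)=\alpha_\mu^{([n])}=\sum_{\sigma\in\SG n}\chi^\mu(\sigma)\sigma$, independently of $t$; by orthogonality of irreducible characters this central element acts on $\specht\nu$ as the scalar $\tfrac{n!}{\ff\mu}\delta_{\mu\nu}$. Consequently any common eigenspace $E$ of the operators $\beta_\lambda(t)$ on $\C[\SG n]$ is contained in the $\specht\mu$-isotypic component for a unique $\mu=\mu(E)\vdash n$, and $\beta_{\mu(E),E}(t)=\tfrac{n!}{\ff{\mu(E)}}\ne 0$. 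Moreover $T_\lambda(u)|_{h=0}$ vanishes identically on $(\C^{N'})^{\otimes n}$ whenever $\ell(\lambda)>N'$ (since $T_\lambda(u)=0$ there), so applying this with $N'=\ell(\mu(E))$, for which $\specht{\mu(E)}$ occurs in $(\C^{N'})^{\otimes n}$, gives $\beta_{\lambda,E}(t)=0$ for all $\ell(\lambda)>\ell(\mu(E))$. Combining the last two facts, the hypothesis of \cref{KP_coordinates}\ref{KP_coordinates_forward} --- that $\beta_{\lambda,E}(t)=0$ whenever $\ell(\lambda)>N$ --- holds if and only if $\ell(\mu(E))\le N$ (``only if'' is the case $\lambda=\mu(E)$, ``if'' is the preceding sentence).

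With this equivalence the corollary drops out. For part \ref{KP_coordinates_forward}: an $E$ satisfying the hypothesis has $\ell(\mu(E))\le N$, so $\specht{\mu(E)}$ occurs in $(\C^N)^{\otimes n}$ and $E$ corresponds there to a common eigenspace $E'\subseteq(\C^N)^{\otimes n}$ of the operators $T_\lambda(t)|_{h=0}$ with the same eigenvalue-vector; \cref{quasiexp_coordinates} with $h=0$ identifies that vector, up to rescaling, with the Pl\"ucker vector about $u=t$ of $V_E:=V_{E'}\in\Gr(N,\C[u])$, whose Wronskian is $(u+z_1)\cdots(u+z_n)$. For part \ref{KP_coordinates_backward}: a space $V\in\Gr(N,\C[u])$ with that Wronskian equals $V_{E'}$ for a common eigenspace $E'\subseteq(\C^N)^{\otimes n}$ of $\betheh(z_1,\dots,z_n)$ by \cref{MTV_operator}\ref{MTV_operator_backward} (with $h=0$); since $\specht{\mu(E')}$ occurs in $(\C^N)^{\otimes n}$ we have $\ell(\mu(E'))\le N$, so passing $E'$ to the corresponding common eigenspace $E\subseteq\C[\SG n]$ (same type $\mu(E')$, same eigenvalue-vector) yields an $E$ satisfying the hypothesis of part \ref{KP_coordinates_forward}, and $V_E=V$ since the two share a Pl\"ucker vector and Pl\"ucker coordinates determine a subspace. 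The one point I would take care with is that ``corresponds to an eigenspace with the same eigenvalue-vector'' must be an exact, not merely projective, match of vectors; I would ensure this by normalizing throughout so that the $\varnothing$-coordinate equals $\Wr=(u+z_1)\cdots(u+z_n)$ as in \cref{MTV_operator}, which is harmless for $t$ away from the finitely many zeros of the Wronskian and extends to all $t$ by polynomiality in $t$.
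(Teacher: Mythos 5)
Your overall strategy---deduce the corollary from \cref{quasiexp_coordinates} and \cref{Tlambda_beta} by transporting common eigenspaces of $\beta_\lambda(t)$ between $(\C^N)^{\otimes n}$ and $\C[\SG n]$ via Schur--Weyl duality---is exactly the paper's approach (the paper simply cites the two results without spelling out the transport), and the core of your argument is correct and nicely detailed.

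There is, however, one genuine slip. For $\mu\vdash n$ you write $\beta_\mu(t)=\alpha_\mu^{([n])}$ ``independently of $t$,'' but from \eqref{defn_beta} the unique $K=[n]$ term carries the scalar $\prod_{k=1}^n(t+z_k)$, so actually $\beta_\mu(t)=\big(\prod_{k=1}^n(t+z_k)\big)\alpha_\mu^{([n])}$ and hence $\beta_{\mu(E),E}(t)=\big(\prod_k(t+z_k)\big)\tfrac{n!}{\ff{\mu(E)}}$. When $t\notin\{-z_1,\dots,-z_n\}$ this is a harmless scalar and your argument stands; but when $\prod_k(t+z_k)=0$ all the ``top'' operators $\beta_\mu(t)$, $\mu\vdash n$, vanish, so they neither pin $E$ to a single isotypic component nor certify $\beta_{\mu(E),E}(t)\ne 0$, and your ``only if'' direction (hypothesis of \ref{KP_coordinates_forward} implies $\ell(\mu(E))\le N$) is no longer established. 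The closing remark that the degenerate $t$ are handled ``by polynomiality in $t$'' is not straightforward to make rigorous here, because the hypothesis of \ref{KP_coordinates_forward} (which eigenvalues are nonzero) and the collection of common eigenspaces both depend discontinuously on $t$; a cleaner patch would be to argue for $t$ away from the zeros of $\prod_k(u+z_k)$ and then pass to a zero $t_0$ by working with a single decomposition of $\C[\SG n]$ into $\betheh(z_1,\dots,z_n)$-stable generalized eigenspaces, or simply to invoke the $t=0$ case of \cite[Theorem 1.3(v)]{karp_purbhoo} and translate, as the paper itself remarks. Apart from this edge case, the proof is sound.
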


\begin{proof}
This follows from \cref{quasiexp_coordinates,Tlambda_beta}.
\end{proof}

\begin{rem}
Let us explain in more detail how the arguments in this paper and in \cite{alexandrov_leurent_tsuboi_zabrodin14} are connected to \cite{karp_purbhoo}. In \cite{karp_purbhoo}, the positivity theorem for spaces of polynomials (\cref{KP_positivity}) is deduced directly from \cref{KP_coordinates}, using the fact that the operators $\beta_\lambda(t)$ are positive semidefinite when $t \ge -z_1, \dots, -z_n$ (which follows from \cref{alpha_semidefinite}). The proof of \cref{KP_coordinates} in \cite{karp_purbhoo} used three key properties of the operators $\beta_\lambda(u)$ \cite[Theorem 1.3, parts (i)--(iii)]{karp_purbhoo}:
\begin{enumerate}[label=(\roman*), leftmargin=*, itemsep=2pt]
\item\label{betalambda_1} (commutation relations) $\beta_\lambda(u)\beta_\mu(v) = \beta_\mu(v)\beta_\lambda(u)$ for all partitions $\lambda,\mu$;
\item\label{betalambda_2} (translation identity) the $\beta_\lambda(u)$'s satisfy the formula \eqref{translation_identity};
\item\label{betalambda_3} (Pl\"{u}cker relations) the $\beta_\lambda(u)$'s satisfy the quadratic \defn{Pl\"{u}cker relations} which cut out the Grassmannian as a projective variety (see \cite[Theorem 4.1]{carrell_goulden10} for the equations).
\end{enumerate}

In \cite{karp_purbhoo}, the commutation relations \ref{betalambda_1} were proved using the combinatorics of permutations, relying on results of Purbhoo \cite{purbhoo23}. We obtain a new proof of \ref{betalambda_1} as a consequence of \cref{Tlambda_beta} and the commutation relations \eqref{Tlambda_commute} for the operators $T_\lambda(u)$.

In \cite{karp_purbhoo} the translation identity \ref{betalambda_2} was proved using the representation theory of symmetric groups. The operators $T_\lambda(u)$ satisfy a similar identity, which is a consequence of \eqref{translation_identity}, \cref{quasiexp_coordinates}, and \cref{bethe_properties}. The reason we do not need this translation identity in order to prove \cref{quasiexp_coordinates} is that we instead use the dual Jacobi--Trudi identity.

Finally, in \cite{karp_purbhoo} the Pl\"{u}cker relations \ref{betalambda_3} were proved by equivalently showing that the symmetric function $\sum_{\lambda}\beta_\lambda(u) s_\lambda(\x)$ is a $\tau$-function of the KP hierarchy, using the combinatorics of symmetric functions. We obtain a new proof of \ref{betalambda_2} using the result from \cite[Section 4]{alexandrov_leurent_tsuboi_zabrodin14} that the operators $T_\lambda(u)$ satisfy the Pl\"{u}cker relations. The proof in \cite{alexandrov_leurent_tsuboi_zabrodin14} similarly shows that $\sum_\lambda T_\lambda(u) s_\lambda(\x)$ is a $\tau$-function of the KP hierarchy, but uses different methods. It may be interesting to further explore connections between these two proofs.
\end{rem}

\bibliographystyle{alpha}
\bibliography{ref}

\end{document}